\algnewcommand{\algorithmicinput}{\textbf{Input:}}
\algnewcommand{\Input}{\item[\algorithmicinput]}
\algnewcommand{\algorithmicoutput}{\textbf{Output:}}
\algnewcommand{\Output}{\item[\algorithmicoutput]}
\crefname{line}{Line}{Lines}
\Crefname{line}{Line}{Lines}
\declaretheorem[style=plain,numberwithin=section,name=Theorem]{theorem}
\declaretheorem[style=plain,sibling=theorem,name=Lemma]{lemma}
\declaretheorem[style=plain,sibling=theorem,name=Corollary]{corollary}
\declaretheorem[style=definition,sibling=theorem,name=Example,qed=$\blacksquare$]{example}
\declaretheorem[style=definition,sibling=theorem,name=Remark,qed=$\blacksquare$]{remark}
\crefname{theorem}{Theorem}{Theorems}
\crefname{proposition}{Proposition}{Propositions}
\crefname{lemma}{Lemma}{Lemmas}
\crefname{exmp}{Example}{Examples}
\crefname{corollary}{Corollary}{Corollarys}
\crefname{claim}{Claim}{Claims}
\crefname{remark}{Remark}{Remarks}
\crefname{section}{Section}{Sections}
\numberwithin{equation}{section}
\newcommand{\C}{\mathbb{C}}
\newcommand{\R}{\mathbb{R}}
\newcommand{\Q}{\mathbb{Q}}
\newcommand{\Z}{\mathbb{Z}}
\newcommand{\caA}{\mathcal{A}}
\newcommand{\caL}{\mathcal{L}}
\newcommand{\be}{\mathbf{e}}
\newcommand{\bu}{\mathbf{u}}
\newcommand{\eps}{\varepsilon}
\newcommand{\ones}{\mathbf{1}}
\newcommand{\B}{\mathrm{B}}
\newcommand{\BB}{\mathbf{B}}
\newcommand{\M}{\mathbf{M}}
\DeclareMathOperator{\ds}{\mathsf{ds}}
\DeclareMathOperator{\GL}{GL}
\DeclareMathOperator{\Mat}{Mat}
\DeclareMathOperator{\PD}{PD}
\DeclareMathOperator{\Rep}{Rep}
\DeclareMathOperator{\In}{In}
\DeclareMathOperator{\Out}{Out}
\DeclareMathOperator{\tr}{tr}
\DeclareMathOperator{\im}{im}
\DeclareMathOperator{\supp}{supp}
\DeclareMathOperator{\Diag}{Diag}
\DeclareMathOperator{\dimv}{\underline{dim}}
\DeclareMathOperator{\poly}{poly}
\DeclareMathOperator{\conv}{conv}
\DeclareMathOperator{\ncrank}{nc-rank}
\DeclarePairedDelimiter{\norm}{\lVert}{\rVert}
\DeclarePairedDelimiter{\abs}{\lvert}{\rvert}
\DeclarePairedDelimiter{\spnn}{\langle}{\rangle}
\title{Algorithmic aspects of semistability of quiver representations}
\author{Yuni Iwamasa\thanks{Graduate School of Informatics, Kyoto University, Kyoto, 606-8501, Japan. Email: \texttt{iwamasa@i.kyoto-u.ac.jp}}%
\and Taihei Oki\thanks{Institute for Chemical Reaction Design and Discovery (ICReDD), Hokkaido University, Sapporo, 001-0021, Japan. Email: \texttt{oki@icredd.hokudai.ac.jp}}%
\and Tasuku Soma\thanks{The Institute of Statistical Mathematics, Tokyo, 190-8562, Japan. Email: \texttt{soma@ism.ac.jp}}}
\date{\today}
\begin{document}

\maketitle

\begin{abstract}
We study the semistability of quiver representations from an algorithmic perspective.
We present efficient algorithms for several fundamental computational problems on the semistability of quiver representations: deciding the semistability and $\sigma$-semistability, finding the maximizers of King's criterion, and computing the Harder--Narasimhan filtration.
We also investigate a class of polyhedral cones defined by the linear system in King's criterion, which we refer to as King cones. 
For rank-one representations, we demonstrate that these King cones can be encoded by submodular flow polytopes, enabling us to decide the $\sigma$-semistability in strongly polynomial time.
Our approach employs submodularity in quiver representations, which may be of independent interest.
\end{abstract}
\allowdisplaybreaks

\section{Introduction and our contribution}\label{sec:introduction}

\emph{Quiver representation} is a simple generalization of matrices that has led to surprisingly deep extensions of various results in linear algebra~\citep{Derksen2017book}. 
In this paper, we study the \emph{semistability} of quiver representations, which is a central concept in the \emph{geometric invariant theory} (GIT), from an algorithmic perspective.
The semistability of quiver representations appears in \emph{operator scaling}~\citep{Gurvits2004,Garg2019,Franks2018,Burgisser2018a,Franks2023}, Brascamp--Lieb (BL) inequality~\citep{Garg2018}, Tyler's M-estimator~\citep{Franks2020}, and scatter estimation of structured normal models~\citep{Amendola2021}, which have attracted considerable attention in theoretical computer science owing to their connection to the noncommutative Edmonds' problem, algebraic complexity theory, and submodular optimization~\citep{Mulmuley2017,Ivanyos2018,Burgisser2019,Hamada2021}.
The goal of this paper is to provide efficient algorithms for various fundamental computational problems on the semistability of quiver representations.
In the following, we describe the problems more formally and present our results.

\subsection{Semistability of quiver representations}\label{sec:semistability}
Here, we present the formal definition of a quiver representation.
We follow the terminologies in \citep{Derksen2017book,Burgisser2019}.
Let $Q = (Q_0, Q_1)$ be a quiver with a vertex set $Q_0$ and an arc set $Q_1$.
In this paper, we consider only acyclic quivers except in \cref{sec:general-ss}.
For each arc $a \in Q_1$, we denote the tail and head of $a$ by $ta$ and $ha$, respectively.
A \emph{representation} $V$ of $Q$ consists of complex vector spaces $V(i)$ for vertex $i \in Q_0$ and linear maps $V(a): V(ta) \to V(ha)$ for arc $a \in Q_1$.
A \emph{subrepresentation} $W$ of $V$ is a representation of the same quiver such that $W(i) \leq V(i)$ for $i \in Q_0$, and $W(a) = V(a)|_{W(ta)}$ and $\im W(a) \leq W(ha)$ for $a \in Q_1$.
The vector of dimensions of $V(i)$ is called the \emph{dimension vector} of the representation, denoted by $\dimv V$.
We call the vector space of all representations of $Q$ with the dimension vector $\alpha$ the \emph{representation space} of $Q$ with dimension vector $\alpha$, which we denote by $\Rep(Q, \alpha)$.
After fixing the dimension vector $\alpha$ and a basis of each $V(i)$, we can represent $V(a)$ as an $\alpha(ha) \times \alpha(ta)$ matrix.
Therefore, the representation space can be identified as
\begin{align}
    \Rep(Q, \alpha) = \bigoplus_{a \in Q_1} \Mat(\alpha(ha), \alpha(ta)),
\end{align}
where $\Mat(m,n)$ denotes the space of $m \times n$ complex matrices.

Fix a dimension vector $\alpha$.
Let
\begin{align}
    \GL(Q, \alpha) \coloneqq \prod_{i \in Q_0} \GL(\alpha(i)),
\end{align}
where $\GL(n)$ denotes the general linear group of degree $n$.
Then, $\GL(Q, \alpha)$ acts on the representation space by a change of basis:
\begin{align}
    g \cdot V \coloneqq (g_{ha} V(a) g_{ta}^{-1})_{a \in Q_1}.
\end{align} 
Note that this is a left action, i.e., $(gh) \cdot V = g \cdot (h \cdot V)$ for $g, h \in \GL(Q, \alpha)$.
We say that a representation $V$ is \emph{semistable} under the $\GL(Q,\alpha)$-action if the orbit closure of $V$ does not contain the origin, i.e.,
\begin{align}
    \inf_{g \in \GL(Q, \alpha)} \sum_{a \in Q_1} \norm{g_{ha} V(a) g_{ta}^{-1}}_{\mathrm{F}}^2 > 0.
\end{align}
Otherwise, $V$ is said to be \emph{unstable}.
The set of all unstable representations is called the \emph{null-cone} of the $\GL(Q, \alpha)$-action.
It is easy to see that any representation is unstable under the $\GL(Q, \alpha)$-action if $Q$ is acyclic.\footnote{The readers may wonder whether we can check the semistability (under $\GL(Q, \alpha)$-action) of quiver representations of \emph{cyclic} quivers. We will address this point later.}

However, the semistability of quivers under subgroups of $\GL(Q, \alpha)$ turns out to be more intricate.
Let $\sigma \in \Z^{Q_0}$ be an integer vector on $Q_0$, which we call a \emph{weight}.
Let $\chi_\sigma$ be the corresponding multiplicative character of $\GL(Q, \alpha)$, i.e.,
\[
    \chi_\sigma(g) = \prod_{i \in Q_0} \det(g_{i})^{\sigma(i)}.   
\]
Note that $\chi_\sigma$ is a one-dimensional representation of $\GL(Q, \alpha)$; $\GL(Q, \alpha)$ acts on $\C$ by $g \cdot x \coloneqq \chi_\sigma(g) x$.
A representation $V$ is said to be \emph{$\sigma$-semistable} if the orbit closure of $(V, 1) \in \Rep(Q, \alpha) \oplus \C$ under the $\GL(Q, \alpha)$ action does not contain the origin, i.e.,
\begin{align}
    \inf_{g \in \GL(Q, \alpha)} \left(\sum_{a \in Q_1} \norm{g_{ha} V(a) g_{ta}^{-1}}_{\mathrm{F}}^2 + \abs{\chi_\sigma(g)}^2 \right) > 0.
\end{align}
It turns out that checking the $\sigma$-semistability of a quiver representation includes operator scaling (noncommutative rank computation) and the membership problem of the BL polytopes.
We will see these examples in the following sections.

Our first result is a deterministic algorithm that, given a quiver representation $V$ and weight $\sigma$, decides whether the representation is $\sigma$-semistable in time polynomial in the \emph{bit complexity} of $V$ and absolute values of the entries of $\sigma$.
Let $\alpha(Q_0) \coloneqq \sum_{i \in Q_0} \alpha(i)$.

\begin{theorem}[informal version of \Cref{thm:ss}]
    Let $Q$ be an acyclic quiver, $V$ a representation of $Q$, and $\sigma$ a weight.
    There is a deterministic algorithm that decides the $\sigma$-semistability of $V$ in time polynomial in the size of $Q$, $\alpha(Q_0)$, bit complexity of $V$, and absolute values of the entries of $\sigma$.
\end{theorem}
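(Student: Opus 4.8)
The plan is to reduce $\sigma$-semistability to a moment-polytope membership question that can be tested by a scaling algorithm, following the template established for operator scaling and Brascamp--Lieb polytopes. First I would recall King's criterion: a representation $V$ is $\sigma$-semistable if and only if $\sigma(\dimv V)=0$ and $\sigma(\dimv W)\le 0$ for every subrepresentation $W\le V$. The naive obstruction is that there are infinitely many subrepresentations, so one cannot enumerate them; instead I would pass to the characterization of the null-cone via the Hilbert--Mumford/Kempf--Kirwan-type numerical criterion, which says that $(V,1)$ is unstable under $\GL(Q,\alpha)$ exactly when there is a one-parameter subgroup $\lambda$ with negative weight on $(V,1)$. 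One-parameter subgroups of $\GL(Q,\alpha)$ are (after a change of basis, which is free because we are taking an orbit closure) given by integer diagonal matrices on each $V(i)$, i.e.\ by filtrations of the vector spaces $V(i)$; the filtration must be compatible with the arcs of $V$ to avoid infinite negative weight, and compatible filtrations are precisely subrepresentations. This recovers King's criterion but, crucially, recasts the question as optimizing a linear functional over the lattice of subrepresentations.

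The key technical step is to make this optimization tractable. The quantity to control is $\min\{\,$(weight of $\lambda$ on $V$)$\,:\,\lambda$ a 1-PSG$\}$, which by the above is governed by the combinatorics of rational filtrations; I would lift this to a continuous relaxation living in a product of flag-type spaces and invoke the known fact (Gurvits, Garg--Gurvits--Oliveira--Wigderson, B\"urgisser--Franks--Garg--Oliveira--Walter--Wigderson, and the quiver version by Chindris and others) that the relevant capacity can be approximated to any desired accuracy by an iterative scaling procedure, and that the semistability question has a \emph{gap}: if $V$ is unstable, the capacity drops by an amount bounded below in terms of the bit complexity of $V$ and $\norm{\sigma}_\infty$. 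Concretely, I would (i) form the shifted representation $(V,1)$ and the extended group, (ii) run an alternating-minimization / operator-scaling-style iteration on the Frobenius-norm-plus-character objective, maintaining rational approximations of the scaling matrices, (iii) run it for a number of iterations that is polynomial in the size of $Q$, $\alpha(Q_0)$, the bit size of $V$, and $\log\!\big(\max_i\abs{\sigma(i)}\big)$ or $\max_i\abs{\sigma(i)}$ as needed, and (iv) declare $\sigma$-semistable iff the achieved value stays above the a priori gap threshold. Acyclicity of $Q$ is used to guarantee that, after the $\det$-twist, the objective is proper (bounded-below with compact sublevel sets modulo the stabilizer), so the infimum is attained along the iteration's trajectory and the scaling factors do not blow up.

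The main obstacle I expect is controlling the \emph{bit complexity of the scaling matrices} along the iteration and proving the \emph{quantitative gap}: one must show that a polynomial number of scaling steps suffices, which requires a potential-function (capacity / log-determinant) argument bounding both the per-step progress from below and the total range from above, with all constants explicit in $\alpha(Q_0)$, $\abs{\sigma}$, and the input bit length. For quivers this is more delicate than for the single-matrix-tuple operator-scaling case because the character $\chi_\sigma$ contributes signed weights and the group is a product over $Q_0$; I would handle it by reducing to a ``blow-up'' quiver where $\sigma$ is encoded by multiplicities (so that $\sigma$ enters the bound polynomially, not just logarithmically, which is why the theorem statement allows dependence on the absolute values of the entries of $\sigma$), and then applying the operator-scaling gap bound on the blow-up. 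The remaining steps --- verifying that the blow-up preserves $\sigma$-semistability, that its size is polynomial in $\alpha(Q_0)$ and $\abs{\sigma}$, and that the iteration is implementable with polynomially-bounded rationals --- are routine but must be checked carefully; I would defer the detailed constants to the formal proof of \Cref{thm:ss}.
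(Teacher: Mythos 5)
Your high-level template is correct and matches the paper: King's criterion, reduction to an operator-scaling-type problem via a blow-up in which $\sigma$ is encoded by multiplicities, a polynomial number of scaling iterations, and an $\eps$-gap argument to decide. However, there is a genuine gap: you never address the central efficiency obstacle, which is precisely what the theorem is about. The blow-up you invoke is Derksen's reduction to a partitioned linear matrix whose $(s,p;t,q)$-block is $\sum_{P:\,s\text{--}t\text{ path}} x_{P,p,q}\,V(P)$, and the resulting CP map has one Kraus operator $V(P)$ per directed path $P$ from a positive-weight vertex to a negative-weight vertex. In a general acyclic quiver the number of such paths is exponential, so applying an operator-scaling algorithm (or a gap bound) as a black box to this instance costs exponential time. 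This is exactly why the prior work of Husz\'ar, which your proposal essentially reproduces, runs only in time polynomial in the number of paths. The missing idea is that, even though the Kraus decomposition is exponentially large, the only quantities the scaling iteration ever needs are $\Phi_V(I)$ and $\Phi_V^\ast(I)$, and these matrix sums can be evaluated in polynomial time by a dynamic program over a topological order of $Q$ (the paper's \Cref{alg:CP}); every iteration of the specified-marginal Sinkhorn scheme then costs $O(|Q_1|\alpha_{\max}^3 + \sum_i \alpha(i)^3)$. Without that observation your step (ii) is not implementable, so the proof does not close.

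Two smaller points. First, you leave the choice of scaling objective vague (``Frobenius-norm-plus-character objective''); the paper's argument requires casting it precisely as operator scaling with \emph{specified marginals} $b^+(s)=\sigma^+(s)\ones/N$, $b^-(t)=\sigma^-(t)\ones/N$, and then invoking the characterization of feasible marginals (\Cref{lem:opscaling-marginals-block}) to identify the feasibility condition with nc-nonsingularity and hence with King's criterion. Second, your gap threshold is left to ``a priori'' bounds; the quantitative content (accuracy $\eps = \Theta(1/N)$ suffices, where $N=\sigma^+(\alpha)$, by an integrality argument on the independent-subspace inequality) is a specific lemma in the paper (\Cref{lem:scaling-ss-eps}) and needs to be proved, not just asserted from the general operator-scaling literature, because the marginals are block-constant rather than uniform.
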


This improves the previous result~\cite{Huszar2021} which runs in time polynomial in the number of \emph{paths} in $Q$, which can be exponential in the size of $Q$.
Furthermore, if the absolute value of the entries of $\sigma$ is constant, our algorithm runs in polynomial time.
This includes the known result for operator scaling~\cite{Garg2019}.

\subsection{King's criterion}
\citet{King1994} showed the following characterization of $\sigma$-semistability, which is known as \emph{King's criterion}.
Let $\sigma(\alpha) \coloneqq \sum_{i \in Q_0} \sigma(i) \alpha(i)$ for a dimension vector $\alpha$.
Then, a representation $V$ is $\sigma$-semistable if and only if $\sigma(\dimv V) = 0$ and  $\sigma(\dimv W) \leq 0$ for any subrepresentation $W$ of $V$.

King's criterion is a common generalization of the noncommutative rank (nc-rank) computation and membership problem of BL polytopes.

\begin{example}[nc-rank]\label{ex:nc-rank}
    Let $Q$ be the generalized Kronecker quiver with $m$ parallel arcs, $\alpha = (n, n)$, and $\sigma = (1,-1)$; see~\Cref{fig:Kr-star-quivers}.
    Any representation $V$ of $Q$ with the dimension vector $\alpha$ can be regarded as an $n \times n$ linear matrix $A = \sum_{a=1}^m x_a V(a)$, where $x_a$ is an indeterminate.
    A subrepresentation $W$ of $V$ is determined by a pair of subspaces $(W(1), W(2))$ such that $\sum_{a=1}^m V(a) W(1) \leq W(2)$.
    Then, King's criterion reads that $V$ is $\sigma$-semistable if and only if $\dim U - \dim(\sum_{a=1}^m V(a) U) \leq 0$ for any subspace $U \leq \C^n$, which is equivalent to that $A$ is \emph{nc-nonsingular}. 
    More generally, the nc-rank of $A$ is equal to the minimum of $n + \dim U - \dim(\sum_{a=1}^m V(a) U)$ over all subspaces $U \leq \C^n$~\citep{Fortin2004}.
\end{example}

\begin{example}[BL polytope]\label{ex:BL polytope}
    Let $Q$ be a star quiver with $m$ leaves.
    We assume that $Q_0 = \{0, 1, \dots, m\}$ and $0$ is the root; see~\Cref{fig:Kr-star-quivers}.
    Let $\alpha = (n, n_1, \dots, n_m)$ and $\sigma = (d, -c_1, \dots, -c_m)$ for positive integers $d, c_1, \dots, c_m$.
    A real representation $V$ of $Q$ with the dimension vector $\alpha$ can be regarded as a tuple of the matrices $(B_1, \dots, B_m)$, where $B_i$ is an $n_i \times n$ matrix.
    Again, a subrepresentation $W$ is an $(m+1)$-tuple of the subspaces $(W(0), W(1), \dots, W(m))$ such that $B_i W(0) \leq W(i)$ for $i \in [m]$.
    King's criterion reads that $V$ is $\sigma$-semistable if and only if $dn = \sum_{i=1}^m c_in_i$ and $dn \dim W(0) - \sum_{i=1}^m c_i n_i \dim (B_i W(0)) \leq 0$ for any subspace $W(0)$.
    This is equivalent to that $p = (c_1/d, \dots, c_m/d)$ is in the BL polytope of linear operators $B_1, \dots, B_m$~\citep{Bennett2008}.
\end{example}

\begin{figure}
    \centering
    \includegraphics[width=0.7\textwidth]{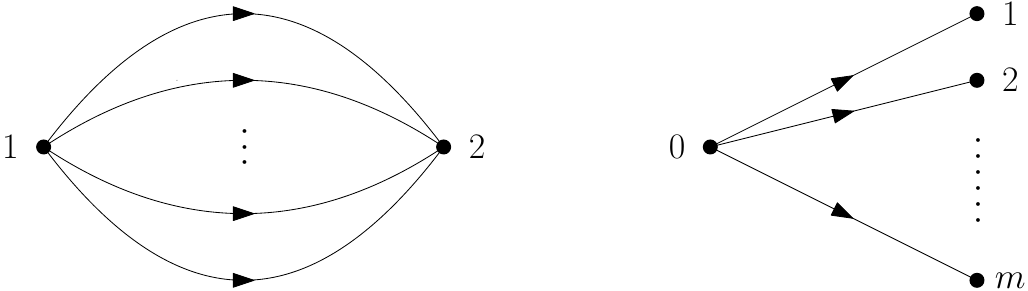}
    \caption{Generalized Kronecker quiver (left) and star quiver (right).\label{fig:Kr-star-quivers}}
\end{figure}

We study the following optimization problem: given a quiver representation $V$ and weight $\sigma$, find a subrepresentation $W$ of $V$ that maximizes $\sigma(\dimv W)$.
In the case of the nc-rank, such a subrepresentation corresponds to a subspace $U$ that maximizes $\dim U - \dim(\sum_{a=1}^m A_i U)$.
Such a subspace is called a \emph{shrunk subspace} and can be regarded as a certificate of the nc-rank~\citep{Ivanyos2018,Franks2023}.
In the case of the BL polytopes, the problem corresponds to \emph{separation} for the BL polytope~\citep{Garg2018}.

King's criterion can be regarded as \emph{maximizing a modular function over the modular lattice of subrepresentations}.
For any subrepresentations $W_1, W_2$ of $V$, define the subrepresentations $W_1 + W_2$ and $W_1 \cap W_2$ as follows.
For each $i \in Q_0$, 
\begin{align}
    (W_1 + W_2)(i) \coloneqq W_1(i) + W_2(i), \quad
    (W_1 \cap W_2)(i) \coloneqq W_1(i) \cap W_2(i),
\end{align}
where the addition and intersection on the right-hand side are those of the vector spaces.
Furthermore, the linear map of $a \in Q_1$ in $W_1 + W_2$ (resp.~$W_1 \cap W_2$) is defined as the restriction of $V(a)$ to $(W_1 + W_2)(ta)$ (resp.~$(W_1 \cap W_2)(ta)$).
Then, $W_1 + W_2$ and $W_1 \cap W_2$ are indeed subrepresentations of $V$.
Thus, the subrepresentations of $V$ form a modular lattice. 
Furthermore, the function $f(W) \coloneqq \sigma(\dimv W)$ is a modular function, i.e., for any subrepresentations $W_1, W_2$ of $V$,
\begin{align}
    f(W_1) + f(W_2) = f(W_1 + W_2) + f(W_1 \cap W_2).
\end{align}
Thus, subrepresentations maximizing $f$ form a sublattice, and there is a unique inclusion-wise minimum maximizer.
Our second result is a deterministic algorithm to find such a maximizer of King's criterion.

\begin{theorem}[informal version of \Cref{thm:King-maximizer}]
    Let $Q$ be an acyclic quiver, $V$ a representation of $Q$, and $\sigma$ a weight.
    There is a deterministic algorithm that finds the inclusion-wise minimum maximizer $W$ of King's criterion in time polynomial in the size of $Q$, $\alpha(Q_0)$, bit complexity of $V$, and absolute values of the entries of $\sigma$.
\end{theorem}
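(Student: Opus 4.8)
The plan is to reduce, in two stages, to the computation of a minimal \emph{shrunk subspace} of a linear matrix over $\Q$, and then to invoke a known constructive noncommutative rank algorithm; the modular-lattice structure recalled above is what guarantees that the object produced is the inclusion-wise minimum maximizer of King's criterion. The first stage reduces to the case where every entry of $\sigma$ lies in $\{-1, 0, 1\}$. Given $(Q, \alpha, \sigma)$, form the acyclic quiver $Q'$ obtained by replacing each vertex $i$ with $k_i \coloneqq \max\{\abs{\sigma(i)}, 1\}$ copies $i_1, \dots, i_{k_i}$ of dimension $\alpha(i)$, replacing each arc $a \colon i \to j$ by the $k_i k_j$ arcs $i_\ell \to j_m$ all equipped with the matrix $V(a)$, and assigning to each copy $i_\ell$ the weight $\sigma(i)/\abs{\sigma(i)}$ (and $0$ if $\sigma(i) = 0$); write $V'$ and $\sigma'$ for the resulting representation and weight. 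This multiplies the size of $Q$ and of $\alpha(Q_0)$ by at most $\norm{\sigma}_{\infty}$, so polynomiality in the size of $V'$ yields the bound claimed in the theorem. A direct check --- using that parallel copies of an arc carry the same matrix --- shows that the diagonal embedding $W \mapsto W'$ with $W'(i_\ell) \coloneqq W(i)$ satisfies $\sigma'(\dimv W') = \sigma(\dimv W)$, and conversely that for any subrepresentation $W'$ of $V'$ the tuple $\overline{W}$ given by $\overline{W}(i) \coloneqq \sum_\ell W'(i_\ell)$ if $\sigma(i) > 0$, $\overline{W}(i) \coloneqq \bigcap_\ell W'(i_\ell)$ if $\sigma(i) < 0$, and $\overline{W}(i) \coloneqq W'(i_1)$ if $\sigma(i) = 0$, is a subrepresentation of $V$ with $\sigma(\dimv \overline{W}) \ge \sigma'(\dimv W')$. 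These two maps match the optimal values and, by the uniqueness of the minimum maximizer on each side, show that the minimum maximizer of King's criterion for $(V,\sigma)$ is obtained from that of $(V',\sigma')$ via the formula for $\overline{W}$; so it suffices to handle the $\{-1,0,1\}$-weighted instance.

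The second stage applies, at the level of the subrepresentation lattice, the reduction already underlying \Cref{thm:ss} from $\sigma$-semistability to noncommutative singularity: it produces a linear matrix $A = \sum_a x_a \widetilde{V}(a)$ over $\Q$ whose size is polynomial in the size of $V'$, together with an order-preserving correspondence between subrepresentations $W$ of $V'$ and a distinguished sublattice of subspaces $U$ under which $\sigma'(\dimv W)$ equals $\dim U - \dim(\sum_a \widetilde{V}(a) U)$ up to an additive constant, the weight-$0$ vertices and the directed-path combinatorics of $Q'$ being absorbed into the block pattern of the $\widetilde{V}(a)$. Hence the maximizers of King's criterion correspond exactly to the shrunk subspaces of $A$, and, crucially, the inclusion-wise minimum maximizer corresponds to the inclusion-wise minimum shrunk subspace. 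A minimal shrunk subspace of $A$ can then be computed deterministically in time polynomial in the size and bit complexity of $A$ by the constructive noncommutative rank algorithm~\citep{Ivanyos2018} (equivalently, by the procedure underlying \Cref{thm:ss}), which works exactly over $\Q$. Pulling the result back through this correspondence, and then through the diagonal/intersection map of the first stage, gives $W^*(i)$ at every vertex $i$; tracking bit sizes along the whole chain gives the claimed running time.

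The step I expect to be the main obstacle is proving that the second reduction is \emph{faithful at the lattice level}: not merely that subrepresentations of $V'$ embed into the subspaces of $A$ with $\sigma'(\dimv \cdot)$ read off correctly, but that no shrunk subspace of $A$ lying outside the image of this embedding can achieve a larger value, and that the embedding is an order isomorphism of the two maximizer sublattices, so that their minimum elements are matched. This is where the acyclicity of $Q'$, the precise encoding of directed paths into $\widetilde{V}$, the treatment of weight-$0$ vertices, and the modularity of $\sigma'(\dimv \cdot)$ must all be used together; by contrast, the concluding appeal to the constructive noncommutative rank algorithm is a black box, and the first-stage reduction is routine.
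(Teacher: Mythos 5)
Your first-stage reduction to $\{-1,0,1\}$-weights is plausible and, with a short symmetry argument (the minimum maximizer of $(V',\sigma')$ is invariant under the automorphisms permuting the copies $i_1,\dots,i_{k_i}$, hence diagonal, hence the diagonal embedding of the minimum maximizer of $(V,\sigma)$), it matches the minimum maximizers on both sides. It is, however, essentially redundant: the Derksen reduction already blows each vertex $i$ up into $\sigma^\pm(i)$ copies, and nothing in the later stages is simplified by having unit weights.

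The real gap is in the second stage. You assert that the reduction underlying \Cref{thm:ss} ``produces a linear matrix $A = \sum_a x_a \widetilde V(a)$ over $\Q$ whose size is polynomial in the size of $V'$,'' and then appeal to \citet{Ivanyos2018} on $A$. But the Derksen linear matrix \eqref{eq:A-Derksen2017} has one Kraus operator $V(P)$ \emph{per directed $s$--$t$ path $P$}, and an acyclic quiver can have exponentially many such paths (this is unaffected by, and possibly worsened by, your first-stage vertex duplication). So there is no polynomial-size explicit $A$ to feed to a black-box nc-rank algorithm; the paper states this obstacle explicitly. The paper's way around it is not to build $A$ at all: the CP map $\Phi_V^\sigma$ and its dual, applied to $I$, can be evaluated in $\poly(|Q|,\alpha(Q_0))$ time by a topological-order recursion (\Cref{alg:CP}), and one then invokes the shrunk-subspace algorithm of \citet{Franks2023} (\Cref{thm:shrunk}), which crucially works in the \emph{oracle} model, requiring only evaluations of $\Phi$ and $\Phi^*$ plus a bit-size bound. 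Your proposal neither provides an oracle-model algorithm nor explains what ``absorbed into the block pattern'' means concretely, so the key efficiency step is missing. (As a smaller point, \citet{Ivanyos2018} is an algebraic Wong-sequence algorithm and is not ``equivalent'' to the scaling procedure underlying \Cref{thm:ss}; more importantly, neither is polynomial-time when the number of Kraus operators is exponential unless given oracle access.)

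The lattice-level faithfulness you flag as the hard part is indeed nontrivial, and it is exactly what \Cref{lem:shrunk-King} establishes: subrepresentations of $V$ give block-diagonal subspaces of the domain of $A$ with $\dim U - \dim \caA U = \sigma(\dimv W)$, and conversely any minimal shrunk subspace is block-diagonal (\Cref{lem:partitoned-shrunk,lem:partitoned-shrunk-rep}), gives rise to a subrepresentation, and matches the value. That correspondence is correct and your instinct to worry about it is right, but it is a secondary issue compared to the missing polynomial-time access to $A$.
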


King's criterion was originally proved using the \emph{Hilbert-Mumford criterion} (see, e.g.,~\cite[Sections 9.6 and 9.8]{Derksen2017book}), a fundamental result in the GIT.
We provide an alternative elementary proof in \Cref{sec:King} for the sake of completeness.

\subsection{Harder-Narasimhan filtration}\label{subsec:intro:HN-filtration}
We use the algorithm for finding the maximizers of King's criterion to devise an algorithm for finding the \emph{Harder-Narasimhan (HN) filtration}~\cite{Harder1975,Hille2002} of a quiver representation.
Roughly speaking, the HN-filtration decomposes a quiver representation into the direct sum of smaller representations.

More precisely, let $\sigma \in \Z^{Q_0}$ be a weight and $\tau \in \Z_+^{Q_0}$ a strictly monotone weight, i.e., a nonnegative weight such that $\tau(\dimv W) > 0$ if $W \neq \{0\}$.
Here, $\{0\}$ denotes the \emph{zero representation}, which is the representation whose dimension vector is the zero vector.
We define the \emph{slope} of a quiver nonzero representation $V$ as $\mu(V) = \sigma(\dimv V)/\tau(\dimv V)$.
We say that $V$ is \emph{$\mu$-semistable}\footnote{It is also called \emph{$(\sigma:\tau)$-semistability} in the literature.} if $\mu(W) \leq \mu(V)$ for any nonzero subrepresentation $W$ of $V$.
The HN-filtration theorem states that for any quiver representation $V$, there exists a unique filtration $\{0\} = W_0 < W_1 < \cdots < W_k = V$ such that (i) $\mu(W_i/W_{i-1}) > \mu(W_{i+1}/ W_i)$ for $i \in [k-1]$ and (ii) $W_i/W_{i-1}$ is $\mu$-semistable.
Here,  $W_i < W_{i+1}$ means that $W_i$ is a subrepresentation of $W_{i+1}$ with $W_i \ne W_{i+1}$, and $W_i/W_{i-1}$ is a representation of $Q$ such that $(W_i/W_{i-1})(j)$ is the quotient space $W_i(j)/W_{i-1}(j)$ for $j \in Q_0$ and $(W_i/W_{i-1})(a)$ is the corresponding quotient linear map of $W_i(a)$ for $a \in Q_1$.
We note that semistability with respect to a slope can be reduced to that for a weight; see~\Cref{lem:slope-to-weight}.

Our third result is a deterministic algorithm for finding the HN-filtration.

\begin{theorem}[informal version of \Cref{thm:HN}]
    Let $Q$ be an acyclic quiver, $V$ a representation of $Q$, and $\mu = \sigma/\tau$ a slope.
    There is a deterministic algorithm that finds the HN-filtration of $V$ with respect to $\mu$ in time polynomial in the size of $Q$, $\alpha(Q_0)$, bit complexity of $V$, and absolute values of the entries of $\sigma$ and $\tau$.
\end{theorem}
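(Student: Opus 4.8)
The plan is to compute the HN-filtration greedily from the top, peeling off one graded piece at a time using the maximizer of King's criterion as the engine. First I would invoke the reduction \Cref{lem:slope-to-weight} so that $\mu$-semistability becomes semistability with respect to a single weight; but for the filtration itself the slope viewpoint is more convenient, so I would keep $\mu = \sigma/\tau$ and only use the reduction when I need to call the King-maximizer subroutine. The key structural fact is that the first step $W_1$ of the HN-filtration is the \emph{maximal} subrepresentation of $V$ of maximum slope: among all nonzero subrepresentations, those achieving $\max_W \mu(W)$ form a sublattice (since for $W_1, W_2$ of equal maximal slope, modularity of $\sigma(\dimv{\cdot})$ and $\tau(\dimv{\cdot})$ forces $W_1 + W_2$ to have at least that slope, hence exactly that slope, and likewise $W_1 \cap W_2$), so there is a unique inclusion-wise maximal one, and this is $W_1$. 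Having found $W_1$, I would recurse on the quotient $V/W_1$, whose HN-filtration, pulled back, gives $W_2 < \cdots < W_k$; the slope-strict-decrease condition (i) follows because $\mu(V/W_1) < \mu(W_1)$ by maximality of $W_1$ and the first graded piece of $V/W_1$ has slope $\mu(V/W_1)$, while an induction handles the later drops.

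The main algorithmic subroutine is therefore: \textbf{given $V$, find the inclusion-wise maximal subrepresentation of maximum slope.} I would realize this via the King-maximizer of \Cref{thm:King-maximizer}. Fix the rational number $\lambda = \max_W \mu(W)$ that we are searching for; for a candidate value $\lambda$ (written as $p/q$ in lowest terms), the weight $\sigma_\lambda \coloneqq q\sigma - p\tau$ has the property that $\sigma_\lambda(\dimv W) \le 0$ for all $W$ iff $\mu(W) \le \lambda$ for all $W$, and the subrepresentations attaining $\sigma_\lambda(\dimv W) = 0$ with $\sigma_\lambda \le 0$ elsewhere are exactly the maximum-slope ones when $\lambda$ is the true maximum. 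To locate $\lambda$ without a search I would use a standard Newton/Dinkelbach-style fractional-programming iteration: start from $\lambda_0 = \mu(V)$, run the King-maximizer for $\sigma_{\lambda_0}$; if the maximum value is $> 0$ there is a subrepresentation of strictly larger slope, read off its slope (a rational with bounded denominators, controlled by $\tau(Q_0)$), set $\lambda_1$ to it, and repeat. Each iteration strictly increases $\lambda$, and since all slopes have denominator at most $\tau(Q_0)$, the number of iterations is polynomially bounded; the final King-maximizer call returns a maximizer of $\sigma_{\lambda^\ast}(\dimv{\cdot})$, and I would take its inclusion-wise \emph{maximal} representative (equivalently, apply the minimal-maximizer guarantee of \Cref{thm:King-maximizer} to the complementary/dual problem, or simply note the maximizers form a sublattice so a maximal one exists and can be extracted).

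For the complexity bookkeeping I would argue: each level of the recursion reduces $\alpha(Q_0)$, so there are at most $\alpha(Q_0)$ levels; each level runs $\poly(\tau(Q_0))$ Dinkelbach iterations; each iteration is one call to the King-maximizer algorithm of \Cref{thm:King-maximizer}, which is polynomial in the size of $Q$, $\alpha(Q_0)$, the bit complexity of $V$, and $\lVert\sigma_\lambda\rVert_\infty$. The only subtlety is bit-size control: the weights $\sigma_\lambda = q\sigma - p\tau$ that arise have entries bounded by $\tau(Q_0)\lVert\sigma\rVert_\infty + \lVert\sigma\rVert_\infty\lVert\tau\rVert_\infty$ or so (since $q \le \tau(Q_0)$ and $|p| \le \tau(Q_0)\lVert\sigma\rVert_\infty$), which is polynomial in the stated parameters, and the bit complexity of the quotient representations $V/W_i$ stays controlled because quotients are taken along subspaces computed by the King-maximizer in the same bit-complexity regime. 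Passing to quotients also requires a choice of basis/projection; I would fix this by computing a complement of $W_i(j)$ inside $V(j)$ via Gaussian elimination, which blows up bit sizes only polynomially.

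The hard part will be the bit-complexity analysis of the recursion: one must check that after repeatedly quotienting and choosing complements, the matrices defining $V/W_1, (V/W_1)/W_2', \dots$ do not suffer exponential blow-up in bit length. I expect this to go through because each quotient step is essentially a single Gaussian elimination against subspaces output by the (already bit-bounded) King-maximizer subroutine, so the bit length grows by at most a polynomial factor per level and there are only $O(\alpha(Q_0))$ levels; making this fully rigorous is the main technical burden, but it is routine in spirit. A secondary point to verify carefully is the correctness of the Dinkelbach iteration — that the slopes encountered strictly increase and have denominators dividing some fixed integer of size $O(\tau(Q_0))$ — which follows from the integrality of $\sigma$, $\tau$ and $\dimv W$ together with the observation that $\tau(\dimv W) \le \tau(Q_0)$ for any subrepresentation $W$ of $V$.
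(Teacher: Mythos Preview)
Your approach is correct but takes a genuinely different route from the paper's. The paper does \emph{not} recurse on quotients; instead it develops a principal-partition viewpoint: it studies the parametric modular function $f_\lambda(W) = \lambda\tau(\dimv W) - \sigma(\dimv W)$, proves that the minimizer lattices $\caL(\lambda)$ are nested as $\lambda$ decreases, and shows that the chain of maximum minimizers $W^+(\lambda)$ at the finitely many critical values of $\lambda$ \emph{is} the HN filtration. Algorithmically (\Cref{alg:HN}) it then simply enumerates the finite set $S = \{p/q : p \in [-\sigma^-(\alpha),\sigma^+(\alpha)],\ q \in [\tau(\alpha)]\}$ of all candidate slope values and, for each $\lambda = p/q$ in decreasing order, calls the King-maximizer once on the \emph{original} representation $V$ with weight $q\sigma - p\tau$. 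This buys a trivial bit-complexity analysis (no quotients are ever formed, so every call sees the same $V$) and yields existence and uniqueness of the HN filtration as a by-product of the nesting lemma, at the cost of a brute-force sweep over $O(|\sigma|\cdot|\tau|)$ slopes. Your greedy top-down recursion with Dinkelbach at each level is the more textbook construction of HN and may issue fewer King-maximizer calls in practice, but you correctly flag the extra burden of tracking bit growth through the quotient chain---a burden the paper's approach sidesteps entirely. One minor slip in your termination argument: the denominator bound should be $\tau(\dimv W) \le \tau(\alpha)$, not $\tau(Q_0)$; this does not affect the polynomial running time since $\tau(\alpha)$ is still polynomial in the stated parameters.
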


This result improves a recent result~\cite{Cheng2024} which runs in time polynomial in the number of paths in $Q$.

Recently, \citet{Hirai2024} introduced the \emph{coarse Dulmage-Mendelsohn (DM) decomposition} of a linear matrix, generalizing the classic DM-decomposition of a bipartite graph.
They showed that a natural gradient flow of operator scaling converges to the coarse DM-decomposition.
However, their result did not provide an efficient algorithm to compute the coarse DM-decomposition because the gradient flow may take exponential time to converge.
We show that the coarse DM-decomposition is indeed a special case of the HN-filtration for the generalized Kronecker quiver.
Since the absolute values of the weights involved for this special case are polynomially bounded, our algorithm finds the coarse DM-decomposition in polynomial time; see~\Cref{subsec:coarse-DM}.

\subsection{King's polyhedral cone, rank-one representations, and submodular flow}\label{subsec:Intro:rank-one}

Motivated by King's criterion, we investigate a polyhedral cone that is the set of $\sigma \in \R^{Q_0}$ satisfying $\sigma(\dimv V) = 0$ and $\sigma(\dimv W) \leq 0$ for any subrepresentation $W$ of $V$.
Since the number of distinct $\dimv W$ is finite, the above linear system is also finite and hence defines a polyhedral cone.
We call the polyhedral cone the \emph{King cone} of a quiver representation $V$.
Interestingly, the King cone is related to the cone of feasible flows in network-flow problems.

Let us first consider the easiest case.
If $V$ is a quiver representation with $\dim V(i) = 1$ for all $i \in Q_0$, then
King's criterion characterizes the existence of a nonnegative flow $\varphi$ on the support quiver with the boundary condition $\partial \varphi = \sigma$.
To state it more precisely,
we introduce notation on flows.
Let $Q = (Q_0, Q_1)$ be a quiver (or a directed graph).
For a vertex subset $X \subseteq Q_0$,
let $\Out(X)$ denote the set of outgoing arcs from $X$, i.e., $\Out(X) \coloneqq \{ a = (i,j) : i \in X,\ j \in Q_0 \setminus X \}$.
Similarly, let $\In(X)$ denote the set of incoming arcs to $X$.
If $X = \{i\}$, we abbreviate $\Out(\{i\})$ and $\In(\{i\})$ as $\Out(i)$ and $\In(i)$, respectively.
If $\Out(X) = \emptyset$, then $X$ is called a \emph{lower set} of $Q$.
For a flow $\varphi \in \R^{Q_1}$ on $Q$,
its \emph{boundary} $\partial \varphi \in \R^{Q_0}$ is defined by $\partial \varphi(i) \coloneqq \sum_{a \in \Out(i)} \varphi(a) - \sum_{a' \in \In(i)} \varphi(a')$ for $i \in Q_0$.

Let us return to the $\sigma$-semistability of a representation $V$ with $\dim V(i) = 1$ for all $i \in Q_0$.
In this case, $V(a) \in \C$ for each arc $a \in Q_1$,
and a subrepresentation $W$ of $V$ can be identified with a vertex subset $X \subseteq Q_0$.
By the definition of a subrepresentation, if $i \in X$ and $W(a) \neq 0$ for $a = (i, j) \in Q_1$, then $j \in X$. 
This implies that $X$ is a lower set in the support quiver of $V$, namely, the subquiver of $Q$ whose arcs are $a \in Q_1$ with $V(a) \neq 0$.
Therefore, King's criterion is equivalent to the purely combinatorial condition that $\sigma(Q_0) = 0$ and $\sigma(X) \leq 0$ for each lower set $X$ of $Q$,
which characterizes the existence of a nonnegative flow $\varphi$ on the support quiver with the boundary condition $\partial \varphi = \sigma$
by Gale's theorem~\cite{Gale1957} (see, e.g., \cite[Theorem~9.2]{KorteVygen2018}).

By generalizing the above observation, we show that if $V$ is a \emph{rank-one} representation of $Q$, i.e., $V(a)$ is a rank-one matrix for each $a \in Q_1$,
then
\begin{itemize}
    \item King's criterion can be rephrased as a purely combinatorial condition with respect to the linear matroids arising from the rank-one matrices $V(a)$ of $Q$, and
    \item the rephrased condition above can be further viewed as the feasibility condition of a network flow-type problem called \emph{submodular flow}.
\end{itemize}
That is,
the King cone is representable as the feasibility of a certain instance of the submodular flow problem.
This enables us to decide the $\sigma$-semistability for rank-one representations in strongly polynomial time.

\begin{theorem}[informal version of \Cref{thm:submodular-flow,thm:rank-one:poly}]\label{thm:intro:rank-one}
Let $Q$ be an acyclic quiver, $V$ a rank-one representation of $Q$, and $\sigma$ a weight.
Then, $\sigma$ is in the King cone if and only if there is a feasible flow in the instance of submodular flow constructed from $V$, $\sigma$. Therefore, using standard submodular flow algorithms, we can decide the $\sigma$-semistability of rank-one representations in strongly polynomial time.
\end{theorem}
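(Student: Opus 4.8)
\medskip
\noindent\textbf{Proof plan.}
The plan is to translate King's criterion for a rank-one representation into a purely combinatorial statement about the linear matroids of the rank-one maps $V(a)$, and then to recognize that statement as the feasibility of a submodular flow. Write $V(a)=u_a w_a^{\ast}$ with $u_a\in\C^{\alpha(ha)}$ and $w_a\in\C^{\alpha(ta)}$ (taking $u_a=0$ or $w_a=0$ when $V(a)=0$). For a subrepresentation $W$ and an arc $a$ one has $V(a)W(ta)=\langle u_a\rangle$ when $W(ta)\not\subseteq\ker V(a)$ and $V(a)W(ta)=\{0\}$ otherwise, so $W$ is a subrepresentation exactly when there is a set $S\subseteq Q_1$ of ``active'' arcs with
\[
    \operatorname{span}\{u_a: a\in\In(i)\cap S\}\;\subseteq\;W(i)\;\subseteq\;\bigcap_{b\in\Out(i)\setminus S}\ker V(b)
    \qquad(i\in Q_0).
\]
Such a $W(i)$ exists precisely when the two sides are nested, i.e.\ $w_b^{\ast}u_a=0$ for all $a\in\In(i)\cap S$ and $b\in\Out(i)\setminus S$; call such an $S$ \emph{admissible}. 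For admissible $S$ the attainable values of $\dim W(i)$ are exactly the integers in $[\rho_i(\In(i)\cap S),\ \alpha(i)-\lambda_i(\Out(i)\setminus S)]$, where $\rho_i$ and $\lambda_i$ are the rank functions of the linear matroids on $\In(i)$ and $\Out(i)$ given by $(u_a)_{a}$ and $(w_b)_{b}$.

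Therefore $\{\dimv W: W\ \text{a subrepresentation of}\ V\}$ is the union, over admissible $S$, of the integer boxes $\prod_{i\in Q_0}[\rho_i(\In(i)\cap S),\ \alpha(i)-\lambda_i(\Out(i)\setminus S)]$, and King's criterion becomes: $\sigma(\alpha)=0$ and, for every admissible $S$,
\[
    \sum_{i:\,\sigma(i)>0}\sigma(i)\bigl(\alpha(i)-\lambda_i(\Out(i)\setminus S)\bigr)
    \;+\;\sum_{i:\,\sigma(i)<0}\sigma(i)\,\rho_i(\In(i)\cap S)\;\le\;0.
\]
This is the advertised combinatorial reformulation: the only surviving data are the matroids $\rho_i,\lambda_i$ and the admissibility relation ``$w_b^{\ast}u_a=0$''. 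In the special case $\dim V(i)\equiv 1$ it specializes to ``$\sigma(Q_0)=0$ and $\sigma(X)\le 0$ for every lower set $X$ of the support quiver'', which by Gale's theorem is the feasibility of a nonnegative flow with boundary $\sigma$.

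The next and main step is to realize the general condition above, exactly as in the degree-one case, as the feasibility of a submodular flow on an auxiliary digraph $G$ built from $V$, with $\sigma$ entering only through a prescribed boundary $\partial\varphi=\sigma$. The arcs of $Q$ become arcs of $G$, and each vertex $i$ is expanded into a gadget that records the matroid $\rho_i$ on its incoming arcs, the matroid $\lambda_i$ on its outgoing arcs — through the submodular function $A\mapsto\alpha(i)-\lambda_i(\Out(i)\setminus A)$ that bounds $\dim W(i)$ from above — and the admissibility constraints $w_b^{\ast}u_a=0$, encoded as forced or forbidden internal arcs. Equivalently, one shows that the convex hull of $\{\dimv W\}$ is a submodular-flow polyhedron, so that King's criterion is a valid-inequality condition on it; the Edmonds--Giles feasibility theorem (the duality for submodular-flow polyhedra) then converts this into feasibility of the dual flow instance, and unwinding the gadgets identifies the resulting cut inequalities, ranging over the lower sets of $G$, with $\sigma(\alpha)=0$ and the displayed inequalities. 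I expect the principal obstacle to lie here: one must choose the gadget so that the assembled set function is genuinely (intersecting- or crossing-)submodular — it is built from a matroid rank function, the function $A\mapsto\alpha(i)-\lambda_i(\Out(i)\setminus A)$, and the admissibility (forced-arc) constraints — and then check that Edmonds--Giles applies to it. One should also verify that the modular-lattice structure of subrepresentations used earlier is consistent with these polymatroidal gadgets.

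Finally, for the algorithmic claim: the digraph $G$ has size polynomial in $|Q_0|$, $|Q_1|$, and $\alpha(Q_0)$, and an evaluation oracle for its submodular function reduces to computing ranks of rational (or Gaussian-rational) matrices built from $V$, hence runs in polynomial time. Feasibility of a submodular flow with such an oracle can be decided by a strongly polynomial algorithm — via strongly polynomial submodular function minimization, or a direct submodular-flow method — and since $\sigma$ appears only in the boundary data the running time is unaffected by the magnitudes of its entries. Hence $\sigma$-semistability of rank-one representations can be decided in strongly polynomial time.
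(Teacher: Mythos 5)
Your combinatorial reformulation of King's criterion for rank-one representations is correct, and it is essentially the paper's \Cref{thm:King:rank-one} in different clothes: your ``admissible set'' $S \subseteq Q_1$ of active arcs corresponds to a lower set of the paper's auxiliary digraph $D[V]$ (in which each arc $a$ is split into two vertices $f_a$ and $v_a$), and your matroid rank functions $\rho_i,\lambda_i$ on $\In(i),\Out(i)$ are the paper's $r_i^-,r_i^+$ on $S_i^-,S_i^+$. The derivation that every subrepresentation gives an admissible $S$ with $W(i)$ sandwiched between $\operatorname{span}\{u_a\}$ and $\bigcap\ker V(b)$, and that these boxes exhaust the attainable $\dimv W$, is sound.

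The gap is at the crux. You describe a plan for the reduction to submodular flow and explicitly flag it as the ``principal obstacle'' without carrying it out: you do not exhibit the digraph, the submodular function, or the capacities, nor verify that Frank's feasibility criterion reproduces your inequalities. The paper's \Cref{thm:submodular-flow} resolves this by working on the \emph{doubled} ground set $S=\bigcup_i\bigl(S_i^+\cup S_i^-\bigr)$, so the $|Q_0|$ rank-function pieces live on pairwise disjoint ground sets and $f_V=\sum_i\bigl(\sigma^+(i)\,r_i^+ + (-\sigma^-(i)\,r_i^-)^{\#}\bigr)$ is manifestly submodular; the digraph $D[V]$ has arcs $(f_a,v_a)$ for $a\in Q_1$ and $(v_a,f_b)$ whenever $v_a\notin\ker f_b$; and conditions (K1) together with the full-rank condition (F) are needed to get $f_V(\emptyset)=f_V(S)=0$ — a prerequisite your sketch does not record. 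Your claim that ``$\sigma$ enters only through a prescribed boundary $\partial\varphi=\sigma$'' is also not right beyond the all-dimensions-one base case: in the paper's instance $\sigma$ appears as scaling coefficients inside $f_V$, and the boundary constraint is $\partial\varphi\in\BB(f_V)$, i.e.\ $\partial\varphi|_{S_i^+}$ is a sum of $\sigma^+(i)$ base indicators of $\M_i^+$ and $\partial\varphi|_{S_i^-}$ is minus a sum of $\sigma^-(i)$ base indicators of $\M_i^-$. So the route is the paper's, but the decisive construction is missing.
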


This theorem recovers the following well-known results when applied to the generalized Kronecker quiver and a star quiver.
\begin{itemize}
    \item A rank-one linear matrix $\sum_{k=1}^m x_k v_k f_k$ is (nc-)nonsingular (where $v_k$ is a column vector and $f_k$ a row vector) if and only if the linear matroids of $(f_k : k \in [m])$ and $(v_k : k \in [m])$ have a common base~\citep{Lovasz1989}.
    \item If each linear operator $B_i = f_i$ is of rank-one for $i \in [m]$ (where $f_i$ is a row vector), the BL polytope coincides with the base polytope of the linear matroid of $(f_i : i \in [m])$~\citep{Barthe1998}.
\end{itemize}

\subsection{Semistability of general quivers}\label{subsec:general-quiver}
Thus far, we have considered the $\sigma$-semistability of acyclic quivers.
As a complementary result, we show that the semistability of cyclic quivers under the $\GL(Q, \alpha)$-action can be efficiently reduced to \emph{noncommutative polynomial identity testing}.
In particular, we show that the polynomial can be represented by an \emph{algebraic branching program} (ABP).
This yields a deterministic algorithm for deciding the semistability of general quivers because noncommutative polynomial identity testing for ABP can be conducted in deterministic polynomial time~\citep{Raz2005}; see \Cref{sec:general-ss}.
Note that \citep{Burgisser2019} devised another deterministic algorithm for the problem with their framework of noncommutative optimization, which is built upon deep results in various areas of mathematics.
See also the discussion in related work.

\paragraph{Remark.}
After submitting the first version of this paper, an anonymous reviewer pointed out that this result for general quivers was sketched in \citet{Mulmuley2017}. See Theorem~10.8 and the last paragraph of Section~10.2 in \citet{Mulmuley2017}. 
At a very high level, our algorithm and Mulmuley's results follow a similar strategy, although Mulmuley's result requires several deep algebro-geometric backgrounds.
We believe that our proof is more explicit and elementary, and hence, is worthy to be presented here for completeness.

\subsection{Our techniques}
In this subsection, we outline our techniques.

\paragraph{$\sigma$-semistability.}
Our starting point is a reduction of general acyclic quivers to the generalized Kronecker quiver~\cite{Derksen2017,Huszar2021}.
We decompose the weight $\sigma = \sigma^+ - \sigma^-$, where $\sigma^+(i) \coloneqq \max\{\sigma(i), 0\}$ and $\sigma^-(i) \coloneqq \max\{-\sigma(i), 0\}$.
Let $Q_0^+$ and $Q_0^-$ be the sets of vertices $i$ such that $\sigma(i) > 0$ and $\sigma(i) < 0$, respectively.
\citet{Derksen2017} showed that the $\sigma$-semistability of a representation $V$ with the dimension vector $\alpha$ is equivalent to the nc-nonsingularity of a partitioned linear matrix
\begin{align}
    A: \bigoplus_{i \in Q_0^+} {\bigl(\C^{\alpha(i)}\bigr)}^{\oplus \sigma^+(i)} \to \bigoplus_{i \in Q_0^-} {\bigl(\C^{\alpha(i)}\bigr)}^{\oplus \sigma^-(i)},
\end{align}
where the $(s,p;t,q)$-block ($s \in Q_0^+$, $p \in [\sigma^+(s)]$, $t \in Q_0^-$, $q \in [\sigma^-(t)]$) of $A$ is given by a linear matrix
\[
    \sum_{P: \text{$s$--$t$ path}} x_{P,p,q} V(P).
\]
Here, $x_{P,p,q}$ is an indeterminate and $V(P)$ is the linear map corresponding to the path $P$, i.e., $V(P) \coloneqq V(a_k) \cdots V(a_1)$ for $P = (a_1, \dots, a_k)$ as a sequence of arcs.
However, because the number of indeterminates is exponential, applying the known nc-rank computation algorithms in a black box manner does not yield the desired time complexity. 

Inspired by the above reduction, we introduce the following scaling problem for quiver representations.
We define a scaling $V_{g,h}$ of the quiver representation $V$ by the block matrices $g = \bigoplus (g_t : t \in Q_0^-)$ and $h = \bigoplus (h_s : s \in Q_0^+)$ as
\begin{align}
    V_{g,h}(a) \coloneqq 
    \begin{cases}
        g_{ha} V(a) h_{ta}^{\dagger}  & \text{if $a \in \Out(Q_0^+) \cap \In(Q_0^-)$}, \\
        V(a) h_{ta}^{\dagger} & \text{if $a \in \In(Q_0^-) \setminus \Out(Q_0^+)$}, \\
        g_{ha} V(a)  & \text{if $a \in \Out(Q_0^+) \setminus \In(Q_0^-)$}, \\
        V(a) & \text{otherwise}.
    \end{cases}
\end{align}
Furthermore, let $(b^+, b^-) \in \bigoplus_{i \in Q_0^+}\Q^{\alpha(s)} \times \bigoplus_{t \in Q_0^-}\Q^{\alpha(t)}$ be vectors such that $b^+(s) = \frac{\sigma^+(s)}{N}\ones_{\alpha(s)}$ and $b^-(t) = \frac{\sigma^-(t)}{N}\ones_{\alpha(t)}$, where $N \coloneqq \sigma^+(\alpha) = \sigma^-(\alpha)$ and $\ones$ denotes the all-one vector.
We say that $V$ is \emph{approximately scalable} (to the marginals $(b^+, b^-)$) if for any $\eps > 0$, there exist block matrices $g$ and $h$ such that $V_{g,h}$ satisfies
\begin{align}
    \norm*{\sum_{s \in Q_0^+} \sum_{P: \text{$s$--$t$ path}} V_{g,h}(P) V_{g,h}(P)^\dagger - \Diag(b^-(t)) }_{\tr} &< \eps \qquad (t \in Q_0^-),\\
    \norm*{\sum_{t \in Q_0^-} \sum_{P: \text{$s$--$t$ path}} V_{g,h}(P)^\dagger V_{g,h}(P) - \Diag(b^+(s)) }_{\tr} &< \eps \qquad (s \in Q_0^+),
\end{align}
where the norm is the trace norm.\footnote{The choice of the trace norm is not important here; we can use any unitary invariant norm.}
This is an instance of \emph{operator scaling with specified marginals}~\citep{Franks2018,Burgisser2018a}.
The crucial observation is that even though there may exist exponentially many $s$--$t$ paths, the above matrix sum can be computed efficiently by exploiting the underlying quiver structure.
Therefore, we can use a simple iterative algorithm in \citep{Burgisser2018a} to check $V$ is approximately scalable for a fixed $\eps > 0$.
We can show that it runs in $O(\eps^{-2}\poly(|Q|, \alpha(Q_0), b))$ time, where $b$ is the bit complexity of $V$.
Furthermore, we show that it is sufficient to consider $\eps = O(1/N)$ to decide the $\sigma$-semistability of $V$.
This yields our algorithm for checking the $\sigma$-semistability of quiver representations.

\paragraph{Maximizers in King's criterion.}
We follow a similar approach to find a maximizer in King's criterion.
We use the above linear matrix of the reduction~\cite{Derksen2017} directly and show that the shrunk subspaces of the above linear matrix correspond to the maximizers of King's criterion.
Then, we show that the necessary operations in the recent shrunk subspace algorithm~\citep{Franks2023} can be performed efficiently, enabling us to find the inclusion-wise minimum maximizer of King's criterion efficiently.
Note that the correspondence between the shrunk subspaces and maximizers of King's criterion is shown in \cite{Huszar2021} using abstract algebra; we provide a more direct and elementary proof using submodularity.

\paragraph{HN-filtration.}
Our HN-filtration algorithm is based on \emph{principal partitions} of submodular systems~\cite{Fujishige2009}.
For a slope $\mu = \sigma/\tau$, we consider a parametric modular function 
\begin{align}
    f_\lambda(W) \coloneqq \lambda \tau(\dimv W) - \sigma(\dimv W)
\end{align}
on the subrepresentations $W$ of $V$, where $\lambda \in \R$ is a parameter.
Let $\caL(\lambda)$ denote the modular lattice of the minimizers of $f_\lambda$ and let $W^-(\lambda)$ and $W^+(\lambda)$ be the minimum and maximum minimizers of $f_\lambda$, respectively.
By the standard argument in principal partition, we show that $W^+(\lambda) \leq W^-(\lambda')$ for $\lambda > \lambda'$.
Furthermore, there must be a finite set of $\lambda$ such that $\caL(\lambda)$ consists of more than one element.
We call such a value of $\lambda$ a \emph{critical value}.
Let $\lambda_1 > \cdots > \lambda_k$ be the critical values.
Then, they induce the filtration
\begin{align}
    \{0\} = W^-(\lambda_1) < W^+(\lambda_1) = W^-(\lambda_2) < W^+(\lambda_2) =  \cdots = W^-(\lambda_k) < W^+(\lambda_k) = V.
\end{align}
We show that this coincides with the HN-filtration.
Each $W^-(\lambda)$ and $W^+(\lambda)$ can be found by our algorithm for maximizers of King's criterion for fixed $\lambda$. 
The possible candidates for critical values can be easily enumerated, enabling us to find the HN-filtration efficiently.

\paragraph{Strongly polynomial-time algorithm for rank-one representations.}
When $V$ is a rank-one representation,
each rank-one matrix $V(a)$ is representable as $v_a f_a$ for some nonzero vector $v_a \in V(ha)$ and nonzero dual vector $f_a \in V(ta)^*$.
Based on this representation, we assign each vertex $i \in Q_0$ to two linear matroids $\M_i^+$ and $\M_i^-$,
where the first is generated by $\{ f_a : f_a \in \Out(i) \}$
and the second by $\{ v_a : v_a \in \In(i) \}$.
Then, we can simulate a subrepresentation $W$ of $V$ as a lower set $X$ of the directed graph $D[V]$ constructed from $V$;
its vertex set is the (disjoint) union of the ground sets
$\{ f_a : f_a \in \Out(i) \}$
and $\{ v_a : v_a \in \In(i) \}$
of the matroids $\M_i^+$ and $\M_i^-$ for $i \in Q_0$;
its arc set represents $Q_1$ and the dependencies as
``if $W(ha) = W(tb)$ contains $v_a$ then $W(tb) \not\leq \ker f_b$; hence, $W(hb)$ must contain $v_b$ for $W$ to be a subrepresentation of $V$.''
This enables us to rephrase King's criterion as a combinatorial condition on the lower sets $X$ of $D[V]$ as
\begin{align}
    \sum_{i \in Q_0} \left( \sigma^+(i) \left(\dim V(i) - r_i^+(\{ f_a : f_a \in \Out(i) \} \setminus X) \right) - \sigma^-(i) r_i^-(\{ v_a : v_a \in \In(i) \} \cap X) \right) \leq 0,
\end{align}
where $r_i^+$ and $r_i^-$ denote the rank functions of $\M_i^+$ and $\M_i^-$,
respectively.
We further rephrase the above combinatorial condition as
the feasibility characterization of a certain instance of the submodular flow problem by Frank~\cite{Frank1984}.
Thus, we can check the $\sigma$-semistability of a rank-one representation $V$ by
checking the feasibility of the instance generated by $V$ of the submodular flow problem.

\paragraph{Semistability of general quivers.}
For the semistability of general quivers, we use an invariant polynomial characterization of the null-cone.
By the general theory of GIT, a representation $V$ is semistable if and only if there exists a $\GL(Q,\alpha)$-invariant homogeneous polynomial $p$ on the representation space $\Rep(Q, \alpha)$ such that $p(V) \neq 0$.
The Le Bruyn-Procesi theorem~\cite{Bruyn1990} stated that the ring of invariant polynomials is generated by polynomials in the form of
\begin{align}
    \tr[V(a_k)\cdots V(a_2)V(a_1)],
\end{align}
for a closed path\footnote{Here, a closed path means a sequence $(a_1, \dots, a_k)$ of arcs such that $ha_l = ta_{l+1}$ ($l \in [k]$), where $a_{k+1} \coloneqq a_1$. In graph theory, it is usually called a \emph{closed walk}. In this paper, we follow the standard terminologies in quiver representation.} $(a_1, a_2, \dots, a_k)$ in $Q$ with length $k \ge 1$.
Furthermore, closed paths with length $1 \le k \leq \alpha(Q_0)^2$ generate the invariant ring, where $\alpha = \dimv V$.

Therefore, we can decide the semistability by checking whether the above polynomial is nonzero at some vertex $i$ and closed path $C$.
The obstacle is that the number of closed paths can be exponential.
To this end, we consider another polynomial in \emph{noncommutative} indeterminate $x_a$ ($a \in Q_1$) defined as
\begin{align}
    \sum_{\text{$C$: closed path starting at $i$}} x^C\tr V(C)
\end{align}
for each vertex $i \in Q_0$, where $x^C = x_{a_k} \cdots x_{a_1}$ for $C = (a_1, \dots, a_k)$.
Then, $V$ is semistable if and only if this noncommutative polynomial is nonzero at some vertex $i$.
Note that the noncommutativity is essential to distinguish closed paths with the same arc sets.
For example, consider a quiver with a single vertex and two self-loops (say, $a$ and $b$).
Then, closed walks $C = abab \cdots ab$ and $C' = a^k b^k$ of length $2k$ have the same number of $a$ and $b$, but their trace $\tr V(C)$ and $\tr V(C')$ are different in general for $k \geq 2$.
So if we use commutative indeterminates, multiple closed walks correspond to a single monomial, and we cannot decide whether $\tr V(C) \neq 0$ for some $C$ or not by checking whether the polynomial is zero or not.

Yet, we need to show how to perform noncommutative polynomial identity testing for this polynomial in deterministic polynomial time.
Using the underlying quiver structure, we can show that this polynomial can written as an ABP of polynomial size.
Applying the algorithm of \citet{Raz2005}, we obtain our algorithm for the semistability of general quivers. 

We remark that this is the only place where a nontrivial algebraic result from the GIT machinery is needed.
The other algorithms and analysis can be understood with elementary linear algebra (assuming the known analysis of operator scaling algorithms, which involves some abstract algebra).

\subsection{Related work}
Existing studies on algorithms for quiver semistability have focused on \emph{bipartite} quivers~\citep{Chindris2021,Chindris2022a,Chindris2022b,Franks2023b}.
We remark that semistability in bipartite quivers is essentially operator scaling with a block structure; see \Cref{subsec:block-matrix}.
In bipartite quivers, the number of paths is equal to the number of arcs; hence, a weak running time was sufficient in the previous studies.
\cite{Huszar2021,Cheng2024} studied general acyclic quivers.
They first used the reduction of \citep{Derksen2017} to the generalized Kronecker quiver and applied the nc-rank algorithm~\cite{Ivanyos2018} in a black box manner.
Hence, their algorithm runs in time polynomial in the number of paths, which can be exponential.

Several polyhedral cones associated with quiver representations have been studied in the literature~\citep{Chindris2022c,Vergne2023}.
The \emph{moment cone} of a quiver $Q$ and a dimension vector $\alpha$ is the polyhedral cone generated by the highest weights of the representations of $Q$ with the dimension vector $\alpha$.
The membership of the moment cone can be decided in strongly polynomial time for bipartite quivers~\citep{Chindris2022c} and even general acyclic quivers~\citep{Vergne2023}.
Another polyhedral cone is the conic hull of weight $\sigma \in \Z^{Q_0}$ such that there exists a nonzero semi-invariant polynomial in $\Rep(Q, \alpha)$ with weight $\sigma$.
The membership problem of this cone is called the \emph{generic semistability problem}~\citep{Chindris2022c}.
By definition, $\sigma$ is in this cone if and only if there exists a \emph{generic} $\sigma$-semistable representation of $Q$, hence the name.
To the best of our knowledge, the generic semistability problem remains open for general acyclic quivers.

The semistability of quiver representations is a special case of semistability in the GIT.
In the most abstract setting, GIT studies group actions on algebraic varieties.
We say that a point in the variety is semistable if its orbit closure does not contain the origin.
\citet{Burgisser2019} proposed a framework of \emph{noncommutative optimization} to devise algorithms for GIT problems in the general setting.
Although noncommutative optimization is a broad and general framework, it does not always provide efficient algorithms for all GIT problems.
Currently, most of the known tractable problems originate from a family of operator scaling problems, which are also contained in the semistability of quiver representations.
Furthermore, it is built upon deep results in various areas of mathematics, such as algebraic geometry, Lie algebra, and representation theory, rendering it difficult for non-experts to understand.
Another conceptual contribution of this paper is identifying the semistability of quiver representations as a useful subclass of GIT.
The semistability of quiver representations is rich enough to capture various interesting problems in the literature while also supporting the design efficient algorithms using elementary techniques.

\subsection{Organization of this paper}
The rest of this paper is organized as follows.
\Cref{sec:prelim} introduces the necessary background and notation of quiver representations, operator scaling, and noncommutative rank.
\Cref{sec:ss} presents our algorithms for deciding the $\sigma$-semistability and finding maximizers of King's criterion.
\Cref{sec:HN} presents our HN-filtration algorithm and its application to the coarse DM-decomposition.
\Cref{sec:rank-one} investigates the King cone of rank-one representations and shows the reduction to submodular flow.
\Cref{sec:general-ss} describes our reduction from the semistability of general quivers to polynomial identity testing of noncommutative ABPs.

\section{Preliminaries}\label{sec:prelim}
We denote the set of nonnegative integers, rational, and real numbers by $\Z_+$, $\Q_+$, and $\R_+$, respectively.
We let $[m, n] \coloneqq \{m, m+1, \dots, n-1, n\}$ for $m,n \in \Z$ with $m \leq n$ and $[n] \coloneqq [1, n] = \{1, \dotsc, n\}$ for a positive integer $n$.
We denote the set of $m \times n$ complex matrices by $\Mat(m, n)$.
We simply denote $\Mat(n, n)$ by $\Mat(n)$.
The conjugate transpose of a matrix $A$ is denoted by $A^\dagger$.
The subgroup of the upper triangular matrices in $\GL(n)$ (i.e., the Borel subgroup) is denoted by $\B(n)$.
For two vector spaces $U$ and $V$, we mean by $U \le V$ that $U$ is a subspace of $V$.
Let $\spnn{S}$ denote the vector space spanned by a multiset $S$ of the vectors.
For a vector $b \in \C^n$, we denote by $\Diag(b)$ the $n \times n$ diagonal matrix such that the entries of $b$ are on the diagonal.

\subsection{Operator scaling, matrix space, and noncommutative rank}
A linear map $\Phi: \Mat(n) \to \Mat(m)$ is said to be \emph{completely positive}, or \emph{CP} for short, if $\Phi(X) = \sum_{\ell=1}^k A_\ell X A_\ell^\dagger$ for some $A_\ell \in \Mat(m, n)$.
These $A_\ell$ are called the \emph{Kraus operators} of $\Phi$.
The \emph{dual} map $\Phi^*: \Mat(m) \to \Mat(n)$ of $\Phi$ is defined by $\Phi^*(X) = \sum_{\ell=1}^k A_\ell^\dagger X A_\ell$.
For $(g, h) \in \GL(m) \times \GL(n)$, we define the scaling $\Phi_{g,h}$ of $\Phi$ by 
\begin{align}
    \Phi_{g,h}(X) \coloneqq g \Phi(h^\dagger X h) g^\dagger = \sum_{\ell=1}^k (g A_\ell h^\dagger) X (g A_\ell h^\dagger)^\dagger.
\end{align}
If $m = n$, the CP map is said to be \emph{square}.

Let $\Phi: \Mat(n) \to \Mat(n)$ be a square CP map.
Let $\ds(\Phi) \coloneqq \norm{\Phi(I) - I}_{\mathrm F}^2 + \norm{\Phi^*(I) - I}_{\mathrm F}^2$, where $\norm{\cdot}_{\mathrm F}$ denotes the Frobenius norm.
Then, $\Phi$ is said to be \emph{approximately scalable} if for any $\eps > 0$, there exists $(g, h) \in \GL(n) \times \GL(n)$ such that $\ds(\Phi_{g,h}) \leq \eps$.
The goal of the \emph{operator scaling} problem is to decide whether a given CP map is approximately scalable or not.

Operator scaling is closely related to the noncommutative rank (nc-rank) of linear matrices.
An $m \times n$ symbolic matrix $A$ is called a \emph{linear matrix} if $A = \sum_{\ell = 1}^k x_\ell A_\ell$ for indeterminates $x_\ell$ and matrices $A_\ell \in \Mat(m, n)$.
Sometimes, it is more convenient to see a linear matrix as a \emph{matrix space} spanned by $A_1, \dots, A_k$.
We denote the corresponding matrix space of a linear matrix $A$ by $\caA$.
For a subspace $U \leq \C^n$, let 
$
    \caA U \coloneqq \spnn{ \{A u : A \in \caA,\ u \in U\} },
    $
which is a subspace of $\C^m$.
The \emph{nc-rank} of a linear matrix $A$ (denoted by $\ncrank A$) is defined by
\begin{align}
    \ncrank A \coloneqq \min\{n + \dim\caA U - \dim U : U \leq \C^n\}.
\end{align}
A square linear matrix $A$ is said to be \emph{nc-nonsingular} if $\ncrank A = n$.
Informally speaking, $\ncrank A$ is the rank of $A$, where the indeterminates $x_i$ are pairwise noncommutative, i.e., $x_i x_j \neq x_j x_i$ for $i \neq j$.
See~\cite{Cohn1995,Fortin2004} for more details.
A pair $(L, R)$ of subspaces $L \leq \C^m$ and $R \leq \C^n$ is called an \emph{independent subspace} if $L \cap \caA R = \{0\}$.
Over the complex field, $(L, R)$ is independent if and only if $\tr(\Pi_L \Phi(\Pi_R)) = 0$, where $\Pi_L$ denotes the orthogonal projection matrix onto $L$.
Then, 
\begin{align}\label{eq:nc-rank-indep}
\ncrank A = m + n  - \max\{\dim L + \dim R : \text{$(L, R)$ an independent subspace}\}.
\end{align}
An independent subspace $(L, R)$ is said to be \emph{maximum} if $\dim L + \dim R$ is maximum.
In particular, a square linear matrix $A$ is nc-nonsingular if and only if $\dim L + \dim R \leq n$ for any independent subspace $(L, R)$.
Gurvits' theorem~\cite{Gurvits2004} states that a square CP map $\Phi$ with the Kraus operator $A_\ell$ ($\ell \in [k]$) is approximately scalable if and only if the linear matrix $\sum_{\ell=1}^k x_\ell A_\ell$ is nc-nonsingular.

\subsection{Finding minimum shrunk subspace}
\emph{Shrunk subspaces} are minimizers of the objective function in nc-rank:
\begin{align}
    f(U) = n + \dim\caA U - \dim U.
\end{align}
It is easy to see that $f$ is submodular, i.e., $f(U) + f(V) \geq f(U \cap V) + f(U + V)$ for any subspaces $U, V \leq \C^n$.
Therefore, the shrunk subspaces form a sublattice of $\C^n$, i.e., if $U$ and $V$ are shrunk subspaces, then $U \cap V$ and $U + V$ are also shrunk subspaces.
Hence, there exists a unique shrunk subspace with minimum dimension, which we call the \emph{minimum} shrunk subspace.
A shrunk subspace can be regarded as a certificate of the value of the nc-rank.
In the GIT perspective, shrunk subspaces correspond to one-parameter subgroups that bring the matrix tuple $(A_1, \dots, A_k)$ to the origin.
The minimum shrunk subspace is particularly important because it has a rational basis with polynomial bit complexity if $(A_1, \dots, A_k)$ has Gaussian integer entries~\cite{Ivanyos2018}.

Several algorithms can find the minimum shrunk subspace in polynomial time~\cite{Ivanyos2018,Franks2023}.
We use the algorithm in \cite{Franks2023}, which is based on a modified operator Sinkhorn iteration. We use it as a black box and 
consider only the square case for simplicity.
The details of the algorithm can be referred from~\cite{Franks2023}.

\begin{theorem}[\citet{Franks2023}]\label{thm:shrunk}
    Let $\Phi: \Mat(n) \to \Mat(n)$ be a square CP map whose Kraus operators have Gaussian integer entries.
    Let $A$ be the linear matrix corresponding to $\Phi$.
    Then, there exists a deterministic polynomial time algorithm that computes a basis of the minimum shrunk subspace of $A$. 
    Furthermore, this algorithm works even if $\Phi$ is given as an oracle that computes $\Phi(X)$ and $\Phi^*(X)$ for $X \in \Mat(n)$, along with an upper bound $b$ of the bit complexity of the Kraus operators.
    The time complexity is polynomial in $\poly(n, b)(\mathrm{EO} + O(n^3))$, where $\mathrm{EO}$ denotes the time complexity of a single oracle call. 
\end{theorem}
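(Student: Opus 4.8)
The plan is to follow the analytic route behind \cite{Franks2023}: repeatedly apply the \emph{operator Sinkhorn} (alternating) scaling to $\Phi$, use a Gurvits-type potential to decide whether $\Phi$ is (approximately) scalable, and, when it is not, extract the minimum shrunk subspace from a spectral gap in the iterates and then round to the exact rational answer. (Alternatively one could invoke the algebraic algorithm of \cite{Ivanyos2018} built on the second Wong sequence; I sketch the Sinkhorn route because it is the one that matches the oracle model and the $O(n^3)$-per-step cost asserted in the statement.)

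First I would reduce to approximation. By \eqref{eq:nc-rank-indep} and submodularity of $f(U) = n + \dim\caA U - \dim U$, the minimum shrunk subspace $U^\star$ — the intersection of all minimizers of $f$ — is well defined, and the analysis of \cite{Ivanyos2018} shows that when the Kraus operators have Gaussian integer entries of bit length $\le b$, the subspace $U^\star$ has a basis of rational vectors of bit length $\poly(n,b)$. Hence there is a precision $2^{-\poly(n,b)}$ such that any subspace of the correct dimension within that distance of $U^\star$ rounds back to $U^\star$ exactly: clear denominators, apply lattice reduction, and verify via \eqref{eq:nc-rank-indep} together with submodularity that the output is indeed the inclusion-minimal shrunk subspace. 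So it suffices to produce, within the claimed time, an orthonormal frame spanning a subspace within distance $2^{-\poly(n,b)}$ of $U^\star$, or to correctly declare $U^\star = \{0\}$.

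Next I would run the iteration and establish the scalable/non-scalable dichotomy. Set $\Phi_0 = \Phi$ and alternately left-normalize by $g = \bigl(\Phi_{2t}(I)\bigr)^{-1/2}$ (so $\Phi_{2t+1}(I) = I$) and right-normalize by $h = \bigl(\Phi_{2t+1}^{*}(I)\bigr)^{-1/2}$ (so $\Phi_{2t+2}^{*}(I) = I$). Using the standard facts (cf.\ \cite{Gurvits2004,Garg2019}) that $\Phi$ is approximately scalable iff $A$ is nc-nonsingular iff the capacity $\inf\{\det\Phi(X) : X \succ 0,\ \det X = 1\}$ is positive — and that for Gaussian-integer inputs this capacity is either $0$ or at least $2^{-\poly(n,b)}$ — together with the fact that a natural potential along the iteration (a product of determinants of the normalized iterates) is at most $1$, is at least a fixed multiple of the capacity, and gains a constant factor at every step where $\ds(\Phi_t) \ge 1/2$, I obtain: if $\ds(\Phi_t) < 1/2$ for some $t \le T_0 = \poly(n,b)$, then $A$ is nc-nonsingular and the algorithm returns $U^\star = \{0\}$; otherwise $\ds(\Phi_t) \ge 1/2$ throughout, which forces $A$ to be nc-singular.

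The hard part will be the nc-singular case, where I would need the quantitative statement at the heart of \cite{Franks2023}: there are a time $T = \poly(n,b)$ and a threshold $\theta = 2^{-\poly(n,b)}$ such that, at step $T$, the eigenvalues of the iterate $\Phi_T(I)$ split cleanly into a low cluster (below $\theta$) and a high cluster, separated by a multiplicative gap $2^{\poly(n,b)}$, and a bounded linear-algebraic post-processing of the corresponding eigenspace yields a subspace within distance $2^{-\poly(n,b)}$ of $U^\star$. Proving this amounts to showing that along the Sinkhorn trajectory the scaled operator restricted to a genuine shrunk subspace must concentrate (``lose mass'') monotonically, and then lower-bounding the \emph{rate} of this concentration using the integrality of the input and the $\poly(n,b)$ bit-complexity bound on $U^\star$; the two delicate points — ruling out eigenvalues in the middle range (existence of a clean gap) and bounding how many iterations are needed before it appears — are where I expect essentially all of the difficulty to lie. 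Granting that, the rest is routine: compute the eigendecomposition of $\Phi_T(I)$, take the span of the eigenvectors below $\theta$, post-process and round to the exact $U^\star$ as in the second paragraph, and account for the running time — each half-step costs one evaluation of $\Phi$ or $\Phi^{*}$ on a single $n \times n$ matrix (an oracle call $\mathrm{EO}$) plus an $O(n^3)$ inverse-square-root and eigendecomposition, there are $\poly(n,b)$ half-steps, all intermediate quantities have bit length $\poly(n,b)$ because the iterates are controlled by the capacity bounds, and the rounding costs $\poly(n,b)$ — which yields the stated $\poly(n,b)\cdot(\mathrm{EO} + O(n^3))$ bound, with the oracle formulation immediate since $\Phi$ is touched only through $\Phi(\cdot)$ and $\Phi^{*}(\cdot)$.
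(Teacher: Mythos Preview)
The paper does not prove this theorem at all: it is stated as a result of \cite{Franks2023} and explicitly treated as a black box (``We do not explain the details of the algorithm because it is fairly involved and beyond the scope of our paper; see~\cite{Franks2023} for the details. Instead, we use it as a black box.''). So there is no ``paper's own proof'' to compare against.

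As to your sketch itself: the high-level route you describe --- run a (modified) operator Sinkhorn iteration, use the capacity/potential analysis to decide nc-(non)singularity, and in the singular case read off the shrunk subspace from a spectral gap in the iterates before rounding --- is indeed the shape of the argument in \cite{Franks2023}. But you correctly flag that the entire substance lies in the part you did not prove: establishing that a clean eigenvalue gap appears after $\poly(n,b)$ steps and that the low eigenspace is $2^{-\poly(n,b)}$-close to $U^\star$. That step requires a careful, nonstandard analysis of the Sinkhorn dynamics (in \cite{Franks2023} this involves a \emph{modified} iteration, not the vanilla alternating normalization you wrote down), and without it the proposal is a plan rather than a proof. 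In particular, the unmodified Sinkhorn iteration you describe does not by itself yield the needed gap in general; the modification is essential, and your sketch does not supply it.
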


\subsection{Operator scaling with specified marginals}
\emph{Operator scaling with specified marginals} is a generalization of operator scaling.
Let $(b^+, b^-) \in \R^n_+ \times \R^m_+$ be a pair of nonincreasing nonnegative vectors, which we call the \emph{target marginals}.
We say that a CP map $\Phi: \Mat(n) \to \Mat(m)$ is \emph{approximately scalable to the target marginals $(b^+, b^-)$} if there exist nonsingular \emph{upper triangular} matrices $(g, h) \in \B(m) \times \B(n)$ such that 
\begin{align}
\norm{\Phi_{g,h}(I) - \Diag(b^-)}_{\tr} \leq \eps, \quad \norm{\Phi_{g,h}^*(I) - \Diag(b^+)}_{\tr} \leq \eps.
\end{align}
Such target marginals are said to be \emph{feasible}.
We define $\Delta b^+ \in \R_+^n$ as $\Delta b^+_j = b^+_j - b^+_{j+1}$ for $j \in [n]$, where we conventionally define $b^+_{n+1} \coloneqq 0$.
Similarly, we define $\Delta b^- \in \R_+^m$.
Let $F^+_j = \langle \be_1, \dots, \be_j \rangle$ be the standard flag of $\C^n$ for $j \in [n]$.
Similarly, we define $F^-_i$ for $i \in [m]$.
The following theorem characterizes the set of feasible marginals by a certain linear system.

\begin{theorem}[{\cite[Theorem~18]{Franks2018}}]\label{thm:opscaling-marginals}
    Let $\Phi: \Mat(n) \to \Mat(m)$ be a CP map and $(b^+, b^-) \in \R^n_+ \times \R^m_+$ a pair of nonincreasing nonnegative vectors.
    Then, $\Phi$ is approximately scalable to the marginals $(b^+, b^-)$ if and only if $\sum_{j=1}^n b^+_j = \sum_{i=1}^m b^-_i \eqqcolon B$ and 
    \begin{align}
        \sum_{i=1}^m \Delta b^-_i \dim(L \cap F^-_i) +  \sum_{j=1}^n \Delta b^+_j \dim(R \cap F^+_j) \leq B
    \end{align}
    for any independent subspace $(L,R)$ of $\Phi$.
\end{theorem}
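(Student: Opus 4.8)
The plan is to prove the two implications separately: that feasibility forces the linear system (a short Hilbert--Mumford-type estimate), and that the linear system suffices for feasibility (the substantive direction, obtained by reducing to \emph{plain} operator scaling so that Gurvits' theorem~\cite{Gurvits2004} and the independent-subspace identity~\eqref{eq:nc-rank-indep} apply).

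For the necessity of the linear system, I would first record that $\tr\Phi_{g,h}(I) = \sum_\ell \norm{g A_\ell h^\dagger}_{\mathrm F}^2 = \tr\Phi_{g,h}^*(I)$ for every $(g,h)$, so running $(g,h)$ along an approximate scaling to $(b^+,b^-)$ and taking limits yields $\sum_j b_j^+ = \sum_i b_i^- \eqqcolon B$. For the inequality, let $N(\Psi)$ denote, for a CP map $\Psi\colon\Mat(n)\to\Mat(m)$, the maximum over independent subspaces $(L,R)$ of $\Psi$ of $\sum_i \Delta b_i^- \dim(L\cap F_i^-) + \sum_j \Delta b_j^+ \dim(R\cap F_j^+)$, i.e.\ the left-hand side of the asserted inequality. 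Two facts are needed. First, $N$ is invariant under scaling by triangular matrices: an independent subspace $(L,R)$ of $\Psi$ corresponds to one of $\Psi_{g,h}$ obtained by applying $g$ to $L$ and the inverse conjugate transpose of $h$ to $R$, and --- with the flags $F^-,F^+$ oriented compatibly with the triangular form of $g$ and $h$, as in~\cite{Franks2018} --- triangular matrices stabilise the standard flags up to Schubert cells, so every number $\dim(L\cap F_i^-)$ and $\dim(R\cap F_j^+)$ is preserved. Second, if $\Psi$ is within trace-norm $\eps$ of having marginals $(b^+,b^-)$ then $N(\Psi)\le B + O(\eps)$: for an independent subspace $(L,R)$ one has $\tr(\Pi_L\Psi(\Pi_R))=0$, hence
\[
    \tr(\Pi_L\Psi(I)) = \tr(\Pi_L\Psi(I-\Pi_R)) \le \tr\Psi(I-\Pi_R) = \tr\Psi^*(I) - \tr(\Pi_R\Psi^*(I)),
\]
and substituting $\Psi(I) = \Diag(b^-) + E$, $\Psi^*(I) = \Diag(b^+) + E'$ with $\norm{E}_{\tr},\norm{E'}_{\tr}\le\eps$, using the telescoping identity $\Diag(b^-) = \sum_i \Delta b_i^- \Pi_{F_i^-}$ (valid since $b^-$ is nonincreasing) together with $\tr(\Pi_L\Pi_{F_i^-}) \ge \dim(L\cap F_i^-)$, and likewise on the $b^+$ side, gives the bound. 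Combining: if $\Phi$ scales approximately to $(b^+,b^-)$, then choosing for each $\eps>0$ a triangular $(g,h)$ with $\Phi_{g,h}$ within $\eps$ of the marginals gives $N(\Phi) = N(\Phi_{g,h}) \le B + O(\eps)$, and $\eps\to 0$ yields $N(\Phi)\le B$, which is exactly the asserted system.

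For sufficiency I would first handle rational marginals $b^+ = p/N$, $b^- = q/N$ with $p,q$ integral, nonincreasing, and $\sum_j p_j = \sum_i q_i = NB \eqqcolon K$. The idea is to blow $\Phi$ up to a \emph{square} CP map $\widehat\Phi\colon\Mat(K)\to\Mat(K)$: split the $j$-th source coordinate into a nested flag of $p_j$ copies and the $i$-th target coordinate into a nested flag of $q_i$ copies, route these copies through $\Phi$, and adjoin a small combinatorial ``flag gadget'' forcing any admissible scaling matrix to be block-triangular with scalar diagonal blocks along each flag level (this is the operator-scaling analogue of the rational reductions used for Brascamp--Lieb polytopes, cf.~\cite{Garg2018}). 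One then checks that $\widehat\Phi$ is plainly approximately scalable --- equivalently, by Gurvits' theorem, that its linear matrix $\widehat A$ is nc-nonsingular, equivalently, by~\eqref{eq:nc-rank-indep}, that $\dim\widehat L + \dim\widehat R \le K$ for every independent subspace $(\widehat L,\widehat R)$ of $\widehat A$ --- if and only if $\Phi$ is approximately scalable to $(b^+,b^-)$, and that, as far as the quantity $\dim\widehat L + \dim\widehat R$ is concerned, the independent subspaces of $\widehat A$ are exactly the independent subspaces $(L,R)$ of $\Phi$ paired with a choice of flag level, so that $\max(\dim\widehat L + \dim\widehat R)\le K$ translates verbatim into $N(\Phi)\le B$. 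Hence $N(\Phi)\le B$ forces $\widehat\Phi$, and therefore $\Phi$, to scale.

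The general real case reduces to the rational one: the asserted system (together with $\sum b^+ = \sum b^-$) defines a rational polyhedron in $(b^+,b^-)$-space --- all coefficients are integers and only finitely many coefficient vectors occur --- so its rational points are dense in it, each such point is feasible by the rational case, and since the set of feasible marginals is closed (it is the moment polytope of the $\GL(n)\times\GL(m)$ action, cf.~\cite{Burgisser2019}) the original real pair is feasible as well. I expect the main obstacle to lie in the rational case, and specifically in the design of the flag gadget: one must arrange that (a) the admissible scalings of $\widehat\Phi$ correspond precisely to the triangular scalings of $\Phi$ towards $(b^+,b^-)$ --- this is where the nonincreasingness of $p$ and $q$, hence the nestedness of the flags, is used --- and (b) the independent subspaces of $\widehat A$ decompose as claimed, so that the nc-nonsingularity inequality for $\widehat A$ becomes literally the linear system for $\Phi$. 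Proving the closedness of the feasible region by an elementary compactness argument (e.g.\ on near-minimizers of a suitable capacity functional), rather than quoting moment-polytope theory, is a secondary but not entirely routine point.
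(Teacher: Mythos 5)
You should first observe that the paper does not prove this statement at all: it is quoted as Theorem~18 of \citet{Franks2018} and invoked as a black box (the label is \texttt{thm:opscaling-marginals} and the theorem header explicitly cites the source). There is therefore no proof in the paper to compare yours against; what you have written is an independent attempt at reproving a cited external result.

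On the merits of the attempt itself: the necessity half is a reasonable sketch. The pairing $(L',R') \leftrightarrow (g^{-1}L', h^\dagger R')$ between independent subspaces of $\Psi_{g,h}$ and of $\Psi$ is the right one, and the inequality $\tr(\Pi_L\Psi(I)) \le \tr\Psi^*(I) - \tr(\Pi_R\Psi^*(I))$ combined with the telescoping identity $\Diag(b^-)=\sum_i\Delta b^-_i\Pi_{F_i^-}$ and $\tr(\Pi_L\Pi_{F_i^-})\ge\dim(L\cap F_i^-)$ does give the $B+O(\eps)$ bound. The one point you should make honest is the flag-orientation issue you flag but do not resolve: with $h$ upper triangular, $h^\dagger$ is lower triangular and does \emph{not} stabilise the standard flag $F_j^+=\langle\be_1,\dots,\be_j\rangle$, so the claim ``$N$ is preserved'' needs the precise convention from \citet{Franks2018} (which flag is standard and which is reversed) to actually hold; as written the invariance is not established but merely asserted.

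The sufficiency half is where the genuine gap lies. The entire content of the theorem in that direction is carried by the ``flag gadget'': you assert that one can blow $\Phi$ up to a square CP map $\widehat\Phi$ of size $K=NB$ whose (a) approximate scalability is equivalent to approximate scalability of $\Phi$ to $(b^+,b^-)$ and whose (b) independent subspaces correspond to pairs (independent subspace of $\Phi$, flag level), so that Gurvits' theorem for $\widehat\Phi$ becomes exactly the stated linear system. You explicitly write that you ``expect the main obstacle to lie in the rational case, and specifically in the design of the flag gadget'' and that both points (a) and (b) must be ``arranged.'' That is an accurate self-assessment: without an explicit construction of the gadget and a verification of (a) and (b), the reduction is a plan rather than a proof. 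Likewise, the closedness of the feasible set, which you need to pass from rational to real marginals, is justified only by appeal to moment-polytope theory (which is precisely the machinery the reduction-to-Gurvits route is supposed to avoid) or by an unproven ``elementary compactness argument.'' Both of these are load-bearing steps. If you want a complete argument you should either write out the gadget and the compactness argument, or simply cite \citet{Franks2018} as the paper does.
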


Let $(b^+, b^-)$ be a feasible marginal with rational entries.
There is an efficient algorithm that finds a scaling of a given CP map $\Phi$ whose marginal is $\eps$-close to $(b^+, b^-)$~\citep{Franks2018,Burgisser2018a}.

\begin{theorem}[Theorem~1.13 in \citet{Burgisser2018a}, specialized for operator scaling]
    Let $\eps > 0$ be an accuracy parameter and $\Phi: \Mat(n) \to \Mat(m)$ a CP map with Gaussian integer Kraus operators.
    Let $(b^+, b^-) \in \Q^n \times \Q^m$ be target marginals such that $b^+_1 \geq \dots \geq b^+_n \geq 0$, $b^-_1 \geq \dots \geq b^-_m \geq 0$, and $\sum_{j=1}^n b^+_j = \sum_{i=1}^m b^-_i = 1$.
    Then, \Cref{alg:marginals} finds upper triangular $g, h$ such that $\norm{\Phi_{g,h}(I) - \Diag(b^-)}_{\tr} \leq \eps$ and $\norm{\Phi_{g,h}^*(I) - \Diag(b^+)}_{\tr} \leq \eps$ in 
    $
    T = O(\eps^{-2}(b + N \log (\ell N)))
    $
    iterations, where $b$ is the maximum bit length of the target marginals $(b^+, b^-)$, $N \coloneqq \max\{m, n\}$, and $\ell$ is the smallest positive integer such that $\ell(b^+, b^-)$ is an integer.
    Furthermore, each iteration can be executed in time $O(N^3)$.
\end{theorem}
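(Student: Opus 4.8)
This statement is the specialization of \cite[Theorem~1.13]{Burgisser2018a} to the operator-scaling action, with target marginals realized as $(\Diag(b^+),\Diag(b^-))$ along the standard flags $F^+_\bullet, F^-_\bullet$ and the scaling group restricted to $\B(m)\times\B(n)$. The plan is thus to invoke that result once the dictionary is in place, and --- for completeness --- to record the standard potential-function argument behind it, which I outline next.

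The algorithm (\Cref{alg:marginals}) alternates two normalizing updates: (i) replace $g$ by an upper-triangular matrix so that the left marginal $\Phi_{g,h}(I)$, read along the flag $F^-_\bullet$, agrees with $\Diag(b^-)$; and (ii) the symmetric update of $h$ on the right. Each update is obtained from a triangular (Cholesky-type) factorization of the current marginal, so one iteration evaluates $\Phi(I)$ and $\Phi^*(I)$ and performs a constant number of $N\times N$ triangular solves and products, which gives the $O(N^3)$ per-iteration bound. Convergence is tracked by a potential $\Psi(g,h)$ equal to the logarithm of the $(b^+,b^-)$-capacity of $\Phi$ evaluated at the current scaling --- concretely a $b^\pm$-weighted sum of logarithms of the diagonal entries of $\Phi_{g,h}^*(I)$ and $\Phi_{g,h}(I)$ after the factorizations.

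Three facts then finish the proof. \emph{(a) Monotone and bounded above:} each normalizing update maximizes $\Psi$ subject to its triangular constraint, so $\Psi$ is nondecreasing along the run; and since $\sum_j b^+_j=\sum_i b^-_i=1$ with the scalings normalized, $\Psi\le 0$. \emph{(b) Bounded below initially:} under the (implicit) hypothesis that $(b^+,b^-)$ is feasible --- equivalently, the inequalities of \Cref{thm:opscaling-marginals} hold --- and given that the Kraus operators are Gaussian integers and $\ell(b^+,b^-)$ is integral, the $(b^+,b^-)$-capacity is at least $2^{-O(b+N\log(\ell N))}$ by a polynomial-bit-complexity estimate on its defining optimization, so $\Psi$ starts at $\ge -O(b+N\log(\ell N))$. \emph{(c) Per-step progress:} if after an iteration $\norm{\Phi_{g,h}(I)-\Diag(b^-)}_{\tr}>\eps$ (or the right-hand analogue fails), the next iteration raises $\Psi$ by $\Omega(\eps^2)$; this is a Pinsker-type inequality, as the gain from a normalizing update equals a Kullback--Leibler divergence between the current diagonal profile and the target $b^\pm$, and KL divergence dominates the squared total-variation (here trace) distance. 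Combining (a)--(c), the number $T$ of iterations that fail the $\eps$-bound satisfies $\Omega(\eps^2)\,T\le \Psi_{\max}-\Psi_{\mathrm{init}}=O(b+N\log(\ell N))$, so $T=O(\eps^{-2}(b+N\log(\ell N)))$, and the first non-failing iteration outputs the desired $g,h$.

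The part I expect to be the real work is the per-step progress estimate (c): one must carry out the Pinsker-type argument for the \emph{block-triangular} normalization forced by the $\B(m)\times\B(n)$ restriction (rather than the cleaner diagonal case), checking that restricting to upper-triangular scalings --- together with the monotonicity $b^\pm_1\ge b^\pm_2\ge\cdots$ and the use of the standard flags --- still permits the normalizing updates and does not weaken the capacity lower bound in (b).
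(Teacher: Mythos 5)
The paper does not prove this statement at all: it is imported verbatim as a citation of Theorem~1.13 of \citet{Burgisser2018a}, with no argument supplied. Your outline is therefore not competing against a proof in this paper but against the cited source, and it does faithfully reconstruct the standard three-part capacity argument used there: the potential $\Psi$ is the log of the $(b^+,b^-)$-capacity (equivalently, after each triangular normalization, a $b^\pm$-weighted log-determinant of the Cholesky factors), it is nondecreasing and bounded above by $0$ since $\sum b^+ = \sum b^- = 1$, it is bounded below by $-O(b+N\log(\ell N))$ via a bit-complexity estimate on the capacity, and each iteration that fails the $\eps$-test gains $\Omega(\eps^2)$ by a Pinsker-type inequality between the KL gain of the normalizing update and the trace-norm discrepancy. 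You also correctly flag that the per-step progress estimate must be carried out for the block upper-triangular normalization dictated by $\B(m)\times\B(n)$ and the standard flags $F^\pm_\bullet$ with nonincreasing $b^\pm$ --- this is exactly the content of the cited reference and is where the real work lives.

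One thing worth saying explicitly, which you touched on in passing: the theorem as stated in the paper omits the hypothesis that $(b^+,b^-)$ is \emph{feasible} (lies in the moment polytope characterized by \Cref{thm:opscaling-marginals}). Without feasibility the capacity is zero, the lower bound in your step (b) fails, and \Cref{alg:marginals} simply never produces an $\eps$-close scaling; the conclusion ``finds $g,h$ such that $\dots\le\eps$'' is false. The original Theorem~1.13 of \citet{Burgisser2018a} carries this hypothesis, and it is used implicitly in \cref{alg:scaling-ss} (the algorithm either succeeds within $T$ iterations or declares the marginals infeasible). Your proof proposal therefore needs the feasibility assumption to be stated, not merely treated as implicit, for step (b) to go through.
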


\begin{algorithm}
    \caption{Operator Sinkhorn iteration for specified marginals~\citep{Franks2018,Burgisser2018a}\label{alg:marginals}}
\begin{algorithmic}[1]
    \For{$t=1, \dots, T$}
        \State \Comment{Left normalization}
        \State Compute the Cholesky decomposition $CC^\dagger = \Phi(I)$. Set $g = \Diag(b^-)^{1/2}C^{-1}$ and $\Phi \gets \Phi_{g,I}$.
        \State \Comment{Right normalization}
        \State Compute the Cholesky decomposition $CC^\dagger = \Phi^*(I)$. Set $h = \Diag(b^+)^{1/2}C^{-1}$ and $\Phi \gets \Phi_{I,h}$.
    \EndFor
\end{algorithmic}
\end{algorithm}

\subsection{Useful results for block matrices}\label{subsec:block-matrix}
We frequently use CP maps or linear matrices with block structures throughout the paper.
Hence we present some useful results here.
Let $V^+$ and $V^-$ be finite sets and let $\alpha(s)$ ($s \in V^+$) and $\alpha(t)$ ($t \in V^-$) be positive integers.
Let $n \coloneqq \sum_{s \in V^+} \alpha(s)$ and $m \coloneqq \sum_{t \in V^-} \alpha(t)$.
Consider a linear matrix $A$ of size $m \times n$ with the following block structure:
The $(s,t)$-block of $A$ is given by a linear matrix 
\begin{align}\label{eq:partitioned-matrix}
    \sum_{\ell = 1}^{k_{s,t}} x_{s,t,\ell} A_{s,t,\ell},
\end{align}
where $k_{s,t}$ is a nonnegative integer, $x_{s,t,\ell}$ is an indeterminate, and $A_{s,t,\ell}$ is an $\alpha(t) \times \alpha(s)$ matrix.
We remark that the indeterminate $x_{s,t,\ell}$ appears only in the $(s,t)$-block of $A$.
Let $\caA_{s,t}$ denote the matrix subspace spanned by $A_{s,t,\ell}$.

The following is a useful lemma that shows the existence of a shrunk subspace respecting the block structure.

\begin{lemma}\label{lem:partitoned-shrunk}
    For a partitioned linear matrix in the form \eqref{eq:partitioned-matrix},
    every shrunk subspace $U$ is in the form $U = \bigoplus_{s \in V^+} U_s$, where $U_s \leq \C^{\alpha(s)}$.
\end{lemma}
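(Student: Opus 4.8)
The plan is to show that, with respect to the objective $f(U) = n + \dim\caA U - \dim U$ whose minimizers are exactly the shrunk subspaces, an arbitrary subspace $U \le \C^n = \bigoplus_{s \in V^+}\C^{\alpha(s)}$ is dominated by its \emph{block hull}. Writing $\pi_s \colon \C^n \to \C^{\alpha(s)}$ for the projection onto the $s$-block and $\iota_s$ for the inclusion of the $s$-block into $\C^n$, set $U_s \coloneqq \pi_s(U) \le \C^{\alpha(s)}$ and $\hat U \coloneqq \sum_{s \in V^+} \iota_s(U_s) = \bigoplus_{s \in V^+}\iota_s(U_s)$. Since $u = \sum_s \iota_s(\pi_s(u))$ for every $u \in U$, we have $U \le \hat U$, so $\dim U \le \dim\hat U$ with equality precisely when $U = \hat U$. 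I would then establish the identity $\caA U = \caA\hat U$. Granting it, $f(\hat U) = n + \dim\caA U - \dim\hat U \le n + \dim\caA U - \dim U = f(U)$, with equality exactly when $U = \hat U$; since a shrunk subspace minimizes $f$, this forces any shrunk $U$ to equal $\hat U = \bigoplus_s \iota_s(U_s)$, which is the assertion.

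The point where the block structure of \eqref{eq:partitioned-matrix} is used is the identity $\caA U = \caA\hat U$. Because the indeterminate $x_{s,t,\ell}$ occurs only in the $(t,s)$-block, the matrix space $\caA$ is the internal direct sum $\bigoplus_{s\in V^+,\,t\in V^-}\tilde\caA_{s,t}$, where $\tilde\caA_{s,t}$ denotes the space of $m\times n$ matrices whose only nonzero block is the $(t,s)$-block, that block being an arbitrary element of $\caA_{s,t}$. For $M \in \tilde\caA_{s,t}$ and any $v \in \C^n$ one has $Mv = M\iota_s(\pi_s(v))$, since $M$ annihilates all blocks of $v$ other than the $s$-th. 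Hence $\caA U$ is spanned by the vectors $M\iota_s(\pi_s(u))$ over $M\in\tilde\caA_{s,t}$, $u\in U$, $s\in V^+$, $t\in V^-$; as $u$ runs over $U$ the component $\pi_s(u)$ runs over all of $U_s$, so this span equals $\sum_{s,t}\tilde\caA_{s,t}\,\iota_s(U_s)$. Running the identical computation with $U$ replaced by $\hat U$ --- whose $s$-block projection is again exactly $U_s$ --- yields the same span, so $\caA U = \caA\hat U$.

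I do not expect a genuine obstacle: the only step needing care is the bookkeeping behind ``applying a block matrix to $U$ only sees the block projections of $U$,'' which is precisely why arranging the indeterminates block-diagonally makes $\caA U$ depend on $U$ solely through the tuple $(U_s)_{s\in V^+}$; the remainder is the routine ``a minimizer is fixed by an operation that never increases the objective'' argument. (Alternatively one could invoke submodularity of $f$ and close the family of shrunk subspaces under $\cap$ and $+$, but the direct domination argument above is shorter and self-contained.)
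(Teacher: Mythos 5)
Your proposal is correct and follows essentially the same route as the paper: replace $U$ by the block hull $\hat U = \bigoplus_s \pi_s(U)$, observe that $\caA U$ depends on $U$ only through the tuple $(\pi_s(U))_s$ so $\caA U = \caA\hat U$, and conclude that a shrunk subspace must already equal its block hull since otherwise $\dim U < \dim\hat U$ would strictly decrease $\dim\caA U - \dim U$. You spell out the computation of $\caA U$ and the inclusion $U \le \hat U$ in more detail than the paper, but the argument is the same.
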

\begin{proof}
    For any $U \leq \C^n$, let $U_s$ denote the projection of $U$ to $\C^{\alpha(s)}$ for $s \in V^+$.
    Then, we have
    \[
        \caA U = \bigoplus_{t \in V^-} \sum_{s \in V^+} \caA_{s,t}U_s.
    \]
    Hence, replacing $U$ with $U' \coloneqq \bigoplus_{s \in V^+} U_s$ does not change $\caA U$.
    If $U \neq U'$, then $\dim U < \dim U'$.
    Hence, we have $\dim\caA U - \dim U > \dim\caA U' - \dim U'$,
    which implies that $U$ is not a shrunk subspace.
\end{proof}

The following is deduced from the modular lattice structure of the optimal shrunk subspaces.

\begin{lemma}\label{lem:partitoned-shrunk-rep}
    Let $A$ be a partitioned linear matrix as in \eqref{eq:partitioned-matrix} and $U = \bigoplus_{s \in V^+} U_s$ the minimal shrunk subspace of $A$.
    Suppose that there exist $s, s' \in V^+$ such that $\caA_{s,t} = \caA_{s',t}$ for all $t \in V^-$.
    Then, $U_s = U_{s'}$.
\end{lemma}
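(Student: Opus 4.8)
The plan is to exploit the block symmetry between the indices $s$ and $s'$ together with the fact, recorded just above, that the shrunk subspaces of $A$ form a lattice with a unique minimum element (here $U$). First I would note that the hypothesis $\caA_{s,t} = \caA_{s',t}$ for all $t \in V^-$ forces $\alpha(s) = \alpha(s')$, since $\caA_{s,t}$ and $\caA_{s',t}$ are subspaces of $\Mat(\alpha(t),\alpha(s))$ and $\Mat(\alpha(t),\alpha(s'))$, respectively. Let $g \in \GL(n)$ be the linear automorphism of $\C^n = \bigoplus_{s'' \in V^+} \C^{\alpha(s'')}$ that interchanges the $s$-block and the $s'$-block and is the identity on every other block.

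Next I would verify that $U' \coloneqq g(U)$ is again a shrunk subspace of $A$. Writing $U' = \bigoplus_{s''} U'_{s''}$, we have $U'_s = U_{s'}$, $U'_{s'} = U_s$, and $U'_{s''} = U_{s''}$ otherwise, so $\dim U' = \dim U$. Using the identity $\caA U = \bigoplus_{t \in V^-} \sum_{s'' \in V^+} \caA_{s'',t} U_{s''}$ from the proof of \Cref{lem:partitoned-shrunk} and substituting $\caA_{s,t} = \caA_{s',t}$, the contributions of the $s$- and $s'$-blocks are $\caA_{s,t}U_s + \caA_{s,t}U_{s'}$ both in $\caA U$ and in $\caA U'$, while every other term is unchanged; hence $\caA U' = \caA U$, and therefore $f(U') = n + \dim\caA U' - \dim U' = f(U)$, so $U'$ also attains the minimum of $f$.

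Finally I would invoke the lattice structure: by submodularity of $f$ we have $f(U) + f(U') \geq f(U \cap U') + f(U + U')$, and since $f(U) = f(U')$ equals the minimum value of $f$ while $f(U \cap U')$ and $f(U + U')$ are at least this minimum, all four quantities coincide; in particular $U \cap U'$ is a shrunk subspace. Because $U$ is the minimum shrunk subspace, $U \subseteq U \cap U' \subseteq U'$, and $\dim U = \dim U'$ then gives $U = U'$. Reading off the $s$-component yields $U_s = U'_s = U_{s'}$, as required.

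I do not foresee a genuine obstacle; the only point that needs a little care is the verification $\caA U' = \caA U$, which is precisely where the hypothesis $\caA_{s,t} = \caA_{s',t}$ is used and where one must check that the block swap $g$ is well defined, i.e., that $\alpha(s) = \alpha(s')$.
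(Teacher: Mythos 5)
Your proof is correct and is exactly the elaboration of what the paper asserts without proof (the paper says the lemma is ``immediate from the modular lattice structure of optimal shrunk subspaces''). You identify the block-swap symmetry, verify that $g(U)$ is again a shrunk subspace with $\caA g(U)=\caA U$, and then use the lattice (sublattice of minimizers) plus minimality of $U$ to conclude $U=g(U)$, which is precisely the intended argument.
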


Let us consider the corresponding CP map $\Phi: \bigoplus_{s \in V^+}\Mat(n_s) \to \bigoplus_{t \in V^-} \Mat(n_t)$ that maps $X = \bigoplus_{s \in V^+} X_s$ to $\Phi(X) = \bigoplus_{t \in V^-}\Phi(X)$, where
\begin{align}\label{eq:block-CP}
    \Phi(X)_t = \sum_{s \in V^+}\sum_{\ell = 1}^{k_{s,t}} A_{s,t,\ell} X_s A_{s,t,\ell}^\dagger.
\end{align}
Here is a version of \Cref{thm:opscaling-marginals} for CP maps with a block structure.
We say that a matrix $h \in \bigoplus_{s \in V^+} \Mat(n_s)$ is \emph{block-wise upper triangular} if $h = \bigoplus_{s \in V^+} h_s$ where $h_s$ is upper triangular for all $s \in V^+$.

\begin{lemma}[{\cite[Proposition~61]{Franks2018}}]\label{lem:opscaling-marginals-block}
    Let $\Phi$ be a CP map in the form \eqref{eq:partitioned-matrix}.
    Let $(b^+, b^-) = (\bigoplus_{s \in V^+} b^+(s), \bigoplus_{t \in V^-} b^-(s)) \in \R^n_+ \times \R^m_+$ be a pair of nonnegative vectors such that $b^+(s) \in \R_+^{\alpha(s)}$ and $b^-(t) \in \R_+^{\alpha(t)}$ are nonincreasing for $s \in V^+$ and $t \in V^-$.
    Then, $\Phi$ is approximately scalable to the marginals $(b^+, b^-)$ if and only if $\sum_{s \in V^+}\sum_{j=1}^{\alpha(s)} b^+(s)_j = \sum_{t \in V^-} \sum_{i=1}^{\alpha(t)} b^-(t)_i \eqqcolon B$ and
    \begin{align}
        \sum_{t \in V^-}\sum_{i=1}^{\alpha(t)} \Delta b^-(t)_i \dim(L_t \cap F^-(t)_i) +  \sum_{s \in V^+} \sum_{j=1}^{\alpha(s)} \Delta b^+(s)_j \dim(R_s \cap F^+(s)_j) \leq B
    \end{align}
    for any independent subspace $(L,R) = (\bigoplus_{t \in V^-} L_t, \bigoplus_{s \in V^+} R_s)$.
    Furthermore, scaling matrices can be taken to be block-wise upper triangular.
\end{lemma}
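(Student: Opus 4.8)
The plan is to derive \Cref{lem:opscaling-marginals-block} from \Cref{thm:opscaling-marginals} by ``flattening'' the block structure. First I would fix orderings of $V^+$ and $V^-$ and identify $\bigoplus_{s\in V^+}\C^{\alpha(s)}$ with $\C^n$ and $\bigoplus_{t\in V^-}\C^{\alpha(t)}$ with $\C^m$; under these identifications $\Phi$ becomes an ordinary CP map $\tilde\Phi\colon\Mat(n)\to\Mat(m)$ whose Kraus operators are the matrices equal to $A_{s,t,\ell}$ in the $(t,s)$ block and zero elsewhere. The per-block standard flags $F^+(s)_\bullet,F^-(t)_\bullet$ assemble into the standard flags $F^+_\bullet,F^-_\bullet$ of $\C^n,\C^m$; the concatenated vectors $\tilde b^+\coloneqq\bigoplus_s b^+(s)$ and $\tilde b^-\coloneqq\bigoplus_t b^-(t)$ are nonincreasing within each block, though not necessarily globally; and a block upper triangular scaling $g=\bigoplus_t g_t$, $h=\bigoplus_s h_s$ of $\Phi$ is exactly a scaling of $\tilde\Phi$ by the block-diagonal upper triangular subgroup $\prod_t\B(\alpha(t))\times\prod_s\B(\alpha(s))\leq\B(m)\times\B(n)$.

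Next I would invoke the analogue of \Cref{thm:opscaling-marginals} for this block-diagonal Borel subgroup. The point is that the proof of \Cref{thm:opscaling-marginals} (optimizing the geodesically convex potential minimized by the operator Sinkhorn iteration, together with the Hilbert--Mumford analysis of one-parameter subgroups) carries over with $\prod_t\B(\alpha(t))\times\prod_s\B(\alpha(s))$ in place of $\B(m)\times\B(n)$: the dominant chamber of a block-diagonal group is the product of the per-block dominant chambers, so the correct hypothesis is precisely that each $b^+(s)$ and each $b^-(t)$ is nonincreasing rather than global monotonicity, and the one-parameter subgroups of this group are block-diagonal, so the obstruction subspaces arise in block-diagonal form $(L,R)=(\bigoplus_t L_t,\bigoplus_s R_s)$ --- the same phenomenon as in \Cref{lem:partitoned-shrunk}. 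This yields: $\Phi$ is approximately scalable to $(b^+,b^-)$ by block upper triangular matrices if and only if $\sum_{s,j}b^+(s)_j=\sum_{t,i}b^-(t)_i\eqqcolon B$ and
\begin{align}
    \sum_i\Delta\tilde b^-_i\dim(L\cap F^-_i)+\sum_j\Delta\tilde b^+_j\dim(R\cap F^+_j)\leq B
\end{align}
for every independent subspace $(L,R)=(\bigoplus_t L_t,\bigoplus_s R_s)$ of $\tilde\Phi$. I would stress that this cannot be shortcut by applying \Cref{thm:opscaling-marginals} to $\tilde\Phi$ with the full Borel $\B(m)\times\B(n)$: when the concatenated marginals fail to be globally nonincreasing, scaling by the block-diagonal Borel is genuinely more restrictive than scaling by the full Borel (already a small example with two blocks whose marginals are out of order witnesses this), and it is the former notion that the lemma records.

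Finally I would translate the displayed inequality back into the statement. Writing a coordinate $i$ of $\C^m$ as lying in block $t$ at local position $i'$, block-diagonality of $L$ gives $\dim(L\cap F^-_i)=\sum_{t'\prec t}\dim L_{t'}+\dim(L_t\cap F^-(t)_{i'})$, and a short summation-by-parts argument collapses $\sum_i\Delta\tilde b^-_i\dim(L\cap F^-_i)$ to $\sum_{t\in V^-}\sum_{i=1}^{\alpha(t)}\Delta b^-(t)_i\dim(L_t\cap F^-(t)_i)$; here the contributions from the indices straddling block boundaries telescope away, which is why global monotonicity of $\tilde b^-$ is not needed. The same holds for the $R$-term, so unwinding the flattening identification turns the condition into exactly
\begin{align}
    \sum_{t\in V^-}\sum_{i=1}^{\alpha(t)}\Delta b^-(t)_i\dim(L_t\cap F^-(t)_i)+\sum_{s\in V^+}\sum_{j=1}^{\alpha(s)}\Delta b^+(s)_j\dim(R_s\cap F^+(s)_j)\leq B,
\end{align}
and the ``block upper triangular'' clause is automatic since we worked inside the block-diagonal Borel throughout. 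The main obstacle I expect is the second step: one has to verify that the potential-function/moment-map machinery and the Hilbert--Mumford criterion underlying \Cref{thm:opscaling-marginals} genuinely localize over the blocks when the scaling group is replaced by the block-diagonal Borel. Everything else --- the flattening, the identification of the Kraus operators and flags, and the summation-by-parts bookkeeping --- is routine.
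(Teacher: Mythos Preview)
The paper does not supply its own proof of this lemma; it is quoted as \cite[Proposition~61]{Franks2018}. Your plan is correct and in fact tracks the structure of that cited result. Steps~1 and~3 (flattening, and the Abel-summation identity that collapses $\sum_i\Delta\tilde b^-_i\dim(L\cap F^-_i)$ to the per-block sum for block-diagonal $L$) are routine and your bookkeeping there is right. You also correctly isolate step~2 as the substantive point: since the concatenated marginals need not be globally nonincreasing, \Cref{thm:opscaling-marginals} cannot be applied to $\tilde\Phi$ with the full Borel, and one must rerun the Kempf--Ness/Hilbert--Mumford analysis with the block-diagonal Borel $\prod_t\B(\alpha(t))\times\prod_s\B(\alpha(s))$ as the scaling group, whence the obstruction one-parameter subgroups (and thus the independent subspaces) come out block-diagonal. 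That rerun is precisely what Proposition~61 of \cite{Franks2018} carries out, so your proposal amounts to re-deriving the cited proposition. There is no gap in your plan, but be aware that step~2 is not a corollary of \Cref{thm:opscaling-marginals}; it requires redoing the moment-map and dominant-chamber computation for the product group, which is exactly the work the paper outsources to the reference.
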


The target marginal often has the same block structure.
We say that the target marginal $(b^+, b^-)$ respects the block structure if $b^+$ and $b^-$ are constant within each block.

\begin{lemma}\label{lem:block-marginal}
    Let  $\Phi$ be a CP map with a block structure as in \eqref{eq:block-CP} and $(b^+, b^-)$ a target marginal respecting the same block structure.
    Let $\eps > 0$ be an accuracy parameter.
    Then, $\Phi$ can be scaled by block-wise upper triangular matrices $g, h$ such that $\norm{\Phi_{g,h}(I) - \Diag(b^-)}_{\tr} \leq \eps$ and $\norm{\Phi_{g,h}^*(I) - \Diag(b^+)}_{\tr} \leq \eps$ if and only if the same is possible with block nonsingular matrices $g, h$.
\end{lemma}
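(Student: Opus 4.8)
The plan is to establish the nontrivial (``if'') direction; the ``only if'' direction is immediate since every invertible block upper triangular matrix is block nonsingular. So assume block nonsingular $g = \bigoplus_{t \in V^-} g_t$ and $h = \bigoplus_{s \in V^+} h_s$ scale $\Phi$ to within $\eps$ in trace norm of the target marginals $(b^-, b^+)$. The idea is to peel the unitary part off each diagonal block via the QR (Iwasawa) decomposition: write $g_t = q_t r_t$ and $h_s = p_s \rho_s$ with $q_t, p_s$ unitary and $r_t, \rho_s$ invertible upper triangular, and put $r \coloneqq \bigoplus_t r_t$, $\rho \coloneqq \bigoplus_s \rho_s$ (both block upper triangular) together with $q \coloneqq \bigoplus_t q_t$, $p \coloneqq \bigoplus_s p_s$ (both block unitary). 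The claim to prove is that $(r, \rho)$ already realizes the marginals to within $\eps$.

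The key step is the routine computation at the level of Kraus operators: $g_t A_{s,t,\ell} h_s^\dagger = q_t\,(r_t A_{s,t,\ell} \rho_s^\dagger)\,p_s^\dagger$, so $\Phi_{g,h}$ is obtained from $\Phi_{r,\rho}$ by conjugating Kraus operators by $q$ on the output side and by $p$ on the input side. Evaluating at $I$ and using $p^\dagger p = I$ and $q^\dagger q = I$ shows that the ``cross-side'' unitaries cancel, leaving $\Phi_{g,h}(I) = q\,\Phi_{r,\rho}(I)\,q^\dagger$ and $\Phi_{g,h}^*(I) = p\,\Phi_{r,\rho}^*(I)\,p^\dagger$. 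What makes the lemma work is that a target marginal respecting the block structure is block-scalar, i.e.\ $\Diag(b^-) = \bigoplus_t \beta^-_t I_{\alpha(t)}$ and $\Diag(b^+) = \bigoplus_s \beta^+_s I_{\alpha(s)}$ for nonnegative scalars $\beta^-_t, \beta^+_s$; hence $\Diag(b^-)$ commutes with the block unitary $q$ and $\Diag(b^+)$ commutes with $p$. Combining this with the unitary invariance of the trace norm gives $\norm{\Phi_{r,\rho}(I) - \Diag(b^-)}_{\tr} = \norm{q^\dagger(\Phi_{g,h}(I) - \Diag(b^-))q}_{\tr} = \norm{\Phi_{g,h}(I) - \Diag(b^-)}_{\tr} \leq \eps$, and the identical argument with $p$ in place of $q$ yields $\norm{\Phi_{r,\rho}^*(I) - \Diag(b^+)}_{\tr} \leq \eps$. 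Thus $(r, \rho)$ is the desired block upper triangular scaling.

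I do not expect a genuine obstacle. The only point deserving care is that the left and right normalizations must be controlled simultaneously, which is precisely why one wants $\Phi_{g,h}$ to be conjugate to $\Phi_{r,\rho}$ by the \emph{pair} $(q, p)$: the output-side block unitary drops out of $\Phi_{g,h}^*(I)$ and the input-side one drops out of $\Phi_{g,h}(I)$, so replacing $(g,h)$ by $(r,\rho)$ perturbs neither marginal. Since the QR decomposition applies within each diagonal block with no further structure required, nothing else needs checking.
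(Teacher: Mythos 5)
Your proof is correct and follows essentially the same route as the paper's: peel the block-unitary factor off the nonsingular scalings via QR, observe that because the target marginal is block-scalar it commutes with the block-unitary factor, and conclude by unitary invariance of the trace norm.
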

\begin{proof}
    By abusing the notation, we denote the common value of the entries of the $s$th block of $b^+$ by $b^+(s)$; the same is used for $b^-(t)$ as well.
    First, observe that for block unitary matrices $g, h$ 
    \begin{align}
        \Phi_{g,h}(I)_t - \Diag(b^-)_t &= g_t \Phi(h^\dagger h)_t g_t^\dagger - b^-(t)I_{\alpha(t)}
        = g_t(\Phi(I)_t - \Diag(b^-)_t)g_t^\dagger, \\ 
        \Phi^*_{g,h}(I)_s - \Diag(b^+)_s &= h_s \Phi^*(g^\dagger g)_s h_s^\dagger - b^+(s)I_{\alpha(s)} = h_s(\Phi^*(I)_s - \Diag(b^+)_s)h_s^\dagger
    \end{align}
    for $s \in V^+$ and $t \in V^-$.
    Since the trace norm is unitary invariant, the approximate scalability condition (i.e., $\norm{\Phi_{g,h}(I) - \Diag(b^-)}_{\tr} \leq \eps$ and $\norm{\Phi_{g,h}^*(I) - \Diag(b^+)}_{\tr} \leq \eps$) does not change under the (left) multiplication of block unitary matrices to $g, h$.
    Therefore, if there exist nonsingular block matrices $g, h$ that satisfy the approximate scalability condition, the R-factors of the QR decompositions of $g, h$ satisfy the same condition.
    Therefore, one can take block-wise upper triangular $g, h$.
    The other direction is trivial.
\end{proof}

\section{Reduction to nc-rank and algorithms for semistability}\label{sec:ss}
In this section, we present our algorithms for deciding $\sigma$-semistability and finding maximizers of King's criterion.

\subsection{Reduction from semistability to nc-rank}
Here, we recall the reduction of the $\sigma$-semistability of an acyclic quiver to nc-nonsingularity testing~\citep{Derksen2017}.

Let $Q$ be an acyclic quiver and $V$ a representation of $Q$ with the dimension vector $\alpha$.
Let $\sigma \in \Z^{Q_0}$ be a weight.\footnote{We do not assume $\sigma(\alpha) = 0$ here because the reduction does not need it. This is useful for finding a maximizer in King's criterion.}
Let $Q_0^+$ and $Q_0^-$ be the sets of vertices $i$ such that $\sigma(i) > 0$ and $\sigma(i) < 0$, respectively.
Let $\sigma^+(i) \coloneqq \max\{\sigma(i), 0\}$ and $\sigma^-(i) \coloneqq \max\{-\sigma(i), 0\}$ for each $i \in Q_0$.
Note that $\sigma = \sigma^+ - \sigma^-$.
Let $N \coloneqq \sigma^+(\alpha) = \sigma^-(\alpha)$.
We define an $N \times N$ partitioned linear matrix $A$ as follows.
As a linear map, 
\begin{align}\label{eq:A-Derksen2017}
    A: \bigoplus_{s \in Q_0^+} {\bigl(\C^{\alpha(s)}\bigr)}^{\oplus \sigma^+(s)} \to \bigoplus_{t \in Q_0^-} {\bigl(\C^{\alpha(t)}\bigr)}^{\oplus \sigma^-(t)}.
\end{align}
The $(s,p;t,q)$-block ($s \in Q_0^+$, $p \in [\sigma^+(s)]$, $t \in Q_0^-$, $q \in [\sigma^-(t)]$) of $A$ is given by a linear matrix
\[
    \sum_{P: \text{$s$--$t$ path}} x_{P,p,q} V(P),
\]
where $x_{P,p,q}$ is an indeterminate and $V(P)$ is the linear map corresponding to the path $P$, i.e., $V(P) \coloneqq V(a_k) \cdots V(a_1)$ for $P = (a_1, \dots, a_k)$ as a sequence of arcs.
The number of indeterminates in $A$ is equal to 
\begin{align}
    \sum_{s \in Q_0^+} \sum_{t \in Q_0^-} \sigma^+(s)\sigma^-(t) m(s,t),
\end{align}
where $m(s,t)$ denotes the number of $s$--$t$ paths in $Q$.
Thus, the number of indeterminates may be exponential in general.

The following lemma connects the nc-rank of $A$ with King's criterion, which is shown in \citep[Theorem~3.3]{Huszar2021} using abstract algebra.
We present an elementary proof for completeness. 

\begin{lemma}\label{lem:shrunk-King}
    The minimal maximizer of $\sigma(\dimv W)$ for the subrepresentations $W$ of $V$ corresponds to the minimal shrunk subspace of $A$.
\end{lemma}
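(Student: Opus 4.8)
The plan is to build an explicit correspondence between subrepresentations of $V$ and block subspaces of $A$, and to track how it transforms the nc-rank objective $f$ and King's objective $W \mapsto \sigma(\dimv W)$. To a subrepresentation $W$ I would associate the block subspace $U(W) \coloneqq \bigoplus_{s \in Q_0^+} \bigoplus_{p \in [\sigma^+(s)]} W(s) \le \C^N$, and to the minimal shrunk subspace $U^*$ of $A$ the subrepresentation $W^*$ obtained by forward propagation, $W^*(i) \coloneqq \sum_{s \in Q_0^+} \sum_{P : \text{$s$--$i$ path}} V(P)\, U^*_s$, where $P$ ranges over all $s$--$i$ paths, including the trivial path at $s$ when $i = s$. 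First I would check well-posedness: by \Cref{lem:partitoned-shrunk}, $U^*$ is a block subspace $\bigoplus_{s,p} U^*_{s,p}$, and since the matrix space in block $\bigl((s,p),(t,q)\bigr)$ of $A$ is $\spnn{V(P) : \text{$s$--$t$ path}}$, which depends only on $s$ and $t$, \Cref{lem:partitoned-shrunk-rep} forces $U^*_{s,p}$ to be independent of $p$; write $U^*_s$ for this common value. One also checks that $W^*$ is a genuine subrepresentation, because for an arc $a \colon i \to j$ we have $V(a) W^*(i) = \sum_{s} \sum_{P} V(Pa)\, U^*_s \le W^*(j)$, each $Pa$ being an $s$--$j$ path.

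The second step is two dimension computations. For an arbitrary subrepresentation $W$, with $U \coloneqq U(W)$, one has $\dim U = \sum_{s \in Q_0^+} \sigma^+(s) \dim W(s)$, while the $(t,q)$-block of $\caA U$ equals $\sum_{s \in Q_0^+} \sum_{P : \text{$s$--$t$ path}} V(P)\, W(s) \le W(t)$, the containment holding because $W$ is a subrepresentation; hence $\dim \caA U \le \sum_{t \in Q_0^-} \sigma^-(t) \dim W(t)$, and therefore
\[
    f(U) = N + \dim \caA U - \dim U \le N - \sigma(\dimv W).
\]
Dually, for $W^*$ and $U^*$: since $t \notin Q_0^+$, the defining formula gives $W^*(t) = \sum_{s} \sum_{P : \text{$s$--$t$ path}} V(P)\, U^*_s$, which is exactly the $(t,q)$-block of $\caA U^*$, so $\dim \caA U^* = \sum_{t} \sigma^-(t) \dim W^*(t)$; combined with $\dim U^* = \sum_s \sigma^+(s) \dim U^*_s \le \sum_s \sigma^+(s) \dim W^*(s)$ (as $U^*_s \subseteq W^*(s)$ via the trivial path), this gives $f(U^*) \ge N - \sigma(\dimv W^*)$.

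Chaining the two bounds yields, for every subrepresentation $W$,
\[
    \sigma(\dimv W) \le N - f\bigl(U(W)\bigr) \le N - f(U^*) \le \sigma(\dimv W^*),
\]
the middle inequality being $f(U^*) = \min_{U' \le \C^N} f(U') \le f(U(W))$. Hence $W^*$ is a maximizer of King's objective. Setting $W = W^*$ collapses the chain to equalities, so $f(U^*) \ge N - \sigma(\dimv W^*)$ is tight; since $\sigma^+(s) > 0$, this forces $\dim U^*_s = \dim W^*(s)$, hence $U^*_s = W^*(s)$ and $U(W^*) = U^*$.

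For minimality, let $W$ be any maximizer. Then $f(U(W)) \le N - \sigma(\dimv W) = N - \sigma(\dimv W^*) = f(U^*) = \min_{U'} f(U')$, so $U(W)$ is itself a shrunk subspace, whence $U^* \le U(W)$ by minimality of $U^*$; equivalently $W^*(s) = U^*_s \le W(s)$ for all $s \in Q_0^+$. Propagating along paths and using that $W$ is a subrepresentation then gives $W^*(i) = \sum_{s} \sum_{P : \text{$s$--$i$ path}} V(P)\, W^*(s) \le \sum_{s} \sum_{P} V(P)\, W(s) \le W(i)$ for every $i \in Q_0$, so $W^* \le W$. Thus $W^*$ is the inclusion-wise minimum maximizer of $\sigma(\dimv\,\cdot\,)$, recovered from the minimum shrunk subspace $U^*$ by forward propagation, while conversely $U^* = U(W^*) = \bigoplus_{s,p} W^*(s)$ — the asserted correspondence. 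The genuinely delicate point is this last pair of steps: arranging the chain of inequalities to be tight exactly at $W^*$ (which yields $U(W^*) = U^*$), and then transferring minimality from $U^*$ back to $W^*$ via the propagation identity. Everything else — the two dimension counts and the collapse of the $p$-index through \Cref{lem:partitoned-shrunk-rep} — is routine.
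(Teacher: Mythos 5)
Your proof is correct and is, at heart, the same correspondence argument as the paper's: map a subrepresentation $W$ to the block subspace $U(W) = \bigoplus_{s,p} W(s)$ to get $f(U(W)) \le N - \sigma(\dimv W)$, construct a subrepresentation from the minimum shrunk subspace $U^*$ to get the reverse inequality, and conclude that the optima match. The one place you genuinely diverge from the paper's proof is the backward construction. The paper sets $W(s) = U^*_s$ exactly for $s \in Q_0^+$, propagates only for the remaining vertices, and then must argue (by contradiction, using that enlarging $U^*_s$ by the incoming image $W^-(s)$ does not change $\caA U^*$ while strictly increasing $\dim U^*$) that the result is actually a subrepresentation. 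You instead propagate from all of $Q_0^+$ into every vertex, including positive ones — which makes $W^*$ a subrepresentation for free — and then recover $U^*_s = W^*(s)$ \emph{a posteriori} from the collapse of the inequality chain, via $\dim U^*_s \le \dim W^*(s)$ summed with the positive weights $\sigma^+(s)$. Both routes work; yours trades the paper's separate well-definedness-by-contradiction step for an equality-tightness step, which is arguably cleaner since it reuses machinery you already set up. You also make explicit the minimality transfer ($U^* \le U(W)$ for any maximizer $W$, then $W^* \le W$ by propagating through the subrepresentation $W$), which the paper's proof leaves implicit; this is a genuine small improvement in completeness, since the lemma does assert that the \emph{minimal} objects correspond, not merely that the optima agree.
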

\begin{proof}
    Let $W$ be a subrepresentation of $V$. 
    Define a subspace 
    \begin{align}
        U \coloneqq \bigoplus_{i \in Q_0^+} W(i)^{\oplus \sigma^+(i)} \leq \bigoplus_{i \in Q_0^+} (\C^{\alpha(i)})^{\oplus \sigma^+(i)}.
    \end{align}
    Inductively, we can show that $\caA U \leq \bigoplus_{i \in Q_0^-} W(i)^{\oplus\sigma^-(i)}$ since $W$ is a subrepresentation.
    Hence, 
    \begin{align}
        \dim U - \dim \caA U \geq \sum_{i \in Q_0^+} \sigma^+(i)\dim W(i) - \sum_{i \in Q_0^-} \sigma^-(i)\dim W(i) = \sigma(\dimv W).
    \end{align}

    On the other hand, let $U \leq \C^N$ be the minimal subspace that maximizes $\dim U - \dim \caA U$.
    By the block structure of $A$ and \Cref{lem:partitoned-shrunk,lem:partitoned-shrunk-rep}, we can decompose $U$ as $U = \bigoplus_{i \in Q_0^+} U_i^{\oplus\sigma^+(i)}$ for some $U_i \leq \C^{\alpha(i)}$.
    Define a subrepresentation $W$ recursively in a topological order of $Q$ as follows.
    If $i \in Q_0^+$, define $W(i) = U_i$. Otherwise, define $W(i) = \sum_{a \in \In(i)} V(a)W(ta)$.
    We show that $W$ is indeed a subrepresentation of $V$. 
    The only nontrivial case is when there exists a vertex $i \in Q_0^+$ such that $\sum_{a \in \In(i)} V(a) W(ta) \not\leq U_i$.
    Suppose that this happens.
    Denote $\sum_{a \in \In(i)} V(a) W(ta)$ by $W^-(i)$.
    Then, let $U'_i = U_i + W^-(i)$ and $U'_j = U_j$ for the rest.
    This gives a subspace $U'$ that has a strictly larger value of $\dim U' - \dim \caA U'$ because $\caA U' = \caA U$, contradicting the assumption of $U$.
    Thus, $W$ is a subrepresentation of $V$.
    By construction, we have $\caA U = \bigoplus_{i \in Q_0^-} W(i)^{\oplus\sigma^-(i)}$.
    Therefore, 
    \begin{align}
        \dim U - \dim \caA U = \sum_{i \in Q_0^+} \sigma^+(i)\dim W(i) - \sum_{i \in Q_0^-} \sigma^-(i)\dim W(i) = \sigma(\dimv W)
    \end{align}
    holds as required.
\end{proof}

By the lemma, one can check the $\sigma$-semistability of a quiver representation by checking whether the corresponding linear matrix is nc-nonsingular or not.
However, the naive reduction does not give a polynomial time algorithm, as the number of indeterminates in the linear matrix may be exponential.

\subsection{Scaling algorithm for $\sigma$-semistability}
We present a scaling algorithm for deciding the $\sigma$-semistability.
The idea is to reduce the problem to operator scaling with specified marginals. 

Let us define a CP map $\Phi_V$ corresponding to a quiver representation $V$.
As a linear map,
\begin{align}
    \Phi_V : \bigoplus_{s \in Q_0^+} \Mat(\alpha(s)) \to \bigoplus_{t \in Q_0^-} \Mat(\alpha(t)).
\end{align}
Let $X = \bigoplus (X_s : s \in Q_0^+)$ be an input block matrix.
Then,
\begin{align}
    (\Phi_V(X))_t \coloneqq  \sum_{s \in Q_0^+}\sum_{P: \text{$s$--$t$ path}} V(P) X_{s} V(P)^\dagger
\end{align}
for $t \in Q_0^-$.
The dual map 
\begin{align}
    \Phi_V^*: \bigoplus_{t \in Q_0^-} \Mat(\alpha(t)) \to \bigoplus_{s \in Q_0^+} \Mat(\alpha(s))
\end{align}
is given by
\begin{align}
    (\Phi_V^*(Y))_s \coloneqq  \sum_{t \in Q_0^-}\sum_{P: \text{$s$--$t$ path}} V(P)^\dagger Y_{t} V(P)
\end{align}
for $s \in Q_0^+$.
Analogous to the scaling of CP maps, we define a scaling $V_{g,h}$ of the quiver representation $V$ by block matrices $g = (g_t : t \in Q_0^-)$ and $h = (h_s : s \in Q_0^+)$ as
\begin{align}
    V_{g,h}(a) \coloneqq 
    \begin{cases}
        g_{ha} V(a) h_{ta}^{\dagger}  & \text{if $a \in \Out(Q_0^+) \cap \In(Q_0^-)$}, \\
        V(a) h_{ta}^{\dagger} & \text{if $a \in \In(Q_0^-) \setminus \Out(Q_0^+)$}, \\
        g_{ha} V(a)  & \text{if $a \in \Out(Q_0^+) \setminus \In(Q_0^-)$}, \\
        V(a) & \text{otherwise}.
    \end{cases}
\end{align}
Here is the key lemma that relates the $\sigma$-semistability of a quiver representation to the feasibility of a specific marginal of $\Phi_V$.

\begin{lemma}
    Let $V$ be a representation of an acyclic quiver $Q$ with the dimension vector $\alpha$ and $\sigma$ a weight with $\sigma(\alpha) = 0$.
    Let $(b^+, b^-)$ be the target marginals such that $b^+(s) = \frac{\sigma^+(s)}{N}\ones_{\alpha(s)}$ and $b^-(t) = \frac{\sigma^-(t)}{N}\ones_{\alpha(t)}$, where $N \coloneqq \sigma^+(\alpha) = \sigma^-(\alpha)$.
    Then, $\Phi_V$ is approximately scalable to the marginals $(b^+, b^-)$ if and only if $V$ is $\sigma$-semistable.
\end{lemma}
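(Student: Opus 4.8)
The plan is to apply the block characterization of feasible marginals, \Cref{lem:opscaling-marginals-block}, directly to the CP map $\Phi_V$ with the given target marginals, and then to translate the resulting condition on independent subspaces into King's criterion. First I would unwind \Cref{lem:opscaling-marginals-block}. Since $b^+(s)=\frac{\sigma^+(s)}{N}\ones_{\alpha(s)}$ is constant within each block, $\Delta b^+(s)_j=0$ for $j<\alpha(s)$ and $\Delta b^+(s)_{\alpha(s)}=\frac{\sigma^+(s)}{N}$, and similarly for $b^-$; moreover $\sigma(\alpha)=0$ forces $\sigma^+(\alpha)=\sigma^-(\alpha)=N$, so $\sum_{s,j}b^+(s)_j=\frac1N\sum_{s}\sigma^+(s)\alpha(s)=1=\frac1N\sum_{t}\sigma^-(t)\alpha(t)=\sum_{t,i}b^-(t)_i$, and the balance condition of \Cref{lem:opscaling-marginals-block} holds automatically with common value $B=1$. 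As $F^+(s)_{\alpha(s)}=\C^{\alpha(s)}$, only the top-index terms of the inequality survive, and after clearing the factor $1/N$ it becomes: $\Phi_V$ is approximately scalable to $(b^+,b^-)$ if and only if
\[
    \sum_{t\in Q_0^-}\sigma^-(t)\dim L_t+\sum_{s\in Q_0^+}\sigma^+(s)\dim R_s\le N
\]
for every independent subspace $(L,R)=(\bigoplus_{t}L_t,\bigoplus_{s}R_s)$ of $\Phi_V$, where independence unwinds to $L_t\cap\bigl(\sum_{s\in Q_0^+}\sum_{P:\,\text{$s$--$t$ path}}V(P)R_s\bigr)=\{0\}$ for each $t\in Q_0^-$.

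Next I would establish a dictionary between such independent subspaces and subrepresentations of $V$, showing that the supremum of the left-hand side above over all independent subspaces equals $N+\sup_W\sigma(\dimv W)$, with $W$ ranging over subrepresentations of $V$. Using $\sigma(\dimv W)=\sum_{s\in Q_0^+}\sigma^+(s)\dim W(s)-\sum_{t\in Q_0^-}\sigma^-(t)\dim W(t)$ together with $\sum_{t}\sigma^-(t)\alpha(t)=N$, both inequalities reduce to explicit constructions. For $(\ge)$: given a subrepresentation $W$, put $R_s:=W(s)$ for $s\in Q_0^+$ and $L_t:=W(t)^\perp$ for $t\in Q_0^-$; propagating along any $s$--$t$ path stays inside $W$, so $\sum_{s}\sum_{P:\,\text{$s$--$t$ path}}V(P)W(s)\le W(t)=L_t^\perp$, whence $(L,R)$ is independent and its weight equals $\sum_t\sigma^-(t)(\alpha(t)-\dim W(t))+\sum_s\sigma^+(s)\dim W(s)=N+\sigma(\dimv W)$. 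For $(\le)$: given an independent $(L,R)$, define $W$ by sweeping $Q$ in a topological order with $W(i):=\sum_{a\in\In(i)}V(a)W(ta)$, adding the seed $R_i$ whenever $i\in Q_0^+$; this $W$ is a genuine subrepresentation, and since no vertex of $Q_0^-$ receives a seed, an easy induction on the topological order yields $W(t)=\sum_{s\in Q_0^+}\sum_{P:\,\text{$s$--$t$ path}}V(P)R_s$ for every $t\in Q_0^-$. Independence of $(L,R)$ then gives $L_t\cap W(t)=\{0\}$, hence $\dim L_t\le\alpha(t)-\dim W(t)$, and, since also $\dim R_s\le\dim W(s)$, the weight of $(L,R)$ is at most $N+\sigma(\dimv W)$.

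Combining the two steps, $\Phi_V$ is approximately scalable to $(b^+,b^-)$ if and only if $N+\sup_W\sigma(\dimv W)\le N$, i.e., $\sigma(\dimv W)\le 0$ for every subrepresentation $W$; together with $\sigma(\dimv V)=\sigma(\alpha)=0$ (which holds by hypothesis) this is exactly King's criterion, hence equivalent to $V$ being $\sigma$-semistable. The step I expect to require the most care is the $(\le)$ half of the dictionary: a priori the independence condition refers to the subspace $\sum_{s}\sum_{P:\,\text{$s$--$t$ path}}V(P)R_s$, a sum over possibly exponentially many paths, but the topological sweep builds precisely this subspace recursively, so individual paths never need to be enumerated. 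As an alternative, one could instead route through the linear matrix $A$ of \eqref{eq:A-Derksen2017}: replicating $(L,R)$ into $(\bigoplus_t L_t^{\oplus\sigma^-(t)},\bigoplus_s R_s^{\oplus\sigma^+(s)})$ identifies the independent subspaces of $\Phi_V$ with the block-respecting ones of $A$ by \Cref{lem:partitoned-shrunk,lem:partitoned-shrunk-rep}, after which \Cref{lem:shrunk-King} finishes the argument; the direct route above is shorter and self-contained.
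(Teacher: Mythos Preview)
Your proposal is correct. The first step---applying \Cref{lem:opscaling-marginals-block} and simplifying via the constant-block marginals to the inequality $\sum_{t}\sigma^-(t)\dim L_t+\sum_{s}\sigma^+(s)\dim R_s\le N$---is exactly what the paper does. The second step differs only in routing: the paper recognizes this inequality as the nc-nonsingularity of the $N\times N$ linear matrix $A$ of \eqref{eq:A-Derksen2017} via~\eqref{eq:nc-rank-indep}, and then invokes \Cref{lem:shrunk-King} (equivalently, the Derksen--Weyman reduction) to pass to $\sigma$-semistability. You instead build the dictionary between independent subspaces of $\Phi_V$ and subrepresentations of $V$ directly, using the same topological sweep that underlies the proof of \Cref{lem:shrunk-King}. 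So the substantive content is identical; your version is self-contained and avoids introducing $A$, while the paper's version is shorter because it reuses \Cref{lem:shrunk-King}. You already identify this alternative at the end of your write-up, so you are aware of both routes.
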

\begin{proof}
    We show that the conditions for scalability in \Cref{lem:opscaling-marginals-block} is equivalent to $A$ being nc-nonsingular, which in turn is equivalent to $V$ being $\sigma$-semistable.
    By construction, 
    \begin{align}
        \Delta b^+(s)_i = 
        \begin{cases}
            \frac{\sigma^+(s)}{N} & \text{if $i = \alpha(s)$}, \\
            0 & \text{otherwise},
        \end{cases}
        \qquad
        \Delta b^-(t)_j = 
        \begin{cases}
            \frac{\sigma^-(t)}{N} & \text{if $j = \alpha(t)$}, \\
            0 & \text{otherwise}.
        \end{cases}
    \end{align}
    Thus, the condition in \Cref{lem:opscaling-marginals-block} reduces to
    \begin{align}\label{eq:ss-indep}
        \sum_{t \in Q_0^-} \sigma^-(t) \dim L_t + \sum_{s \in Q_0^+} \sigma^+(s) \dim R_s \leq N
    \end{align}
    for any independent subspace $(L, R)$.
    This shows the nc-nonsingularity of $A$ by the formula of nc-rank in terms of independent subspaces~\eqref{eq:nc-rank-indep}.
\end{proof}

To check the feasibility of the marginal, one can use the scaling algorithm for operator scaling with the specified marginals (\Cref{alg:marginals}).
This yields \cref{alg:scaling-ss}.

\begin{algorithm}
    \caption{Scaling algorithm for $\sigma$-semistability\label{alg:scaling-ss}}
    \begin{algorithmic}[1]
        \Require{a representation $V$ of an acyclic quiver $Q$ and a weight $\sigma$}
        \State Let $b^+(s) \coloneqq \frac{\sigma^+(s)}{N} \ones_{\alpha(s)}$ and $b^-(t) \coloneqq \frac{\sigma^-(t)}{N} \ones_{\alpha(t)}$, where $N = \max\{\sigma^+(\alpha), \sigma^-(\alpha)\}$.
        \State Set $\eps = \frac{1}{6N}$ and $T \coloneqq O(\eps^{-2}(b + d \log (Nd)))$, where $d = \max\{\alpha(Q_0^+), \alpha(Q_0^-)\}$, and $b$ is the maximum bit length of $\sigma$.
        \For{$t=1, \dots, T$}
        \State \Comment{Left normalization}
        \State If $\norm{\Phi_V(I) - \Diag(b^-)}_{\tr} \leq \eps$ then \Return \texttt{Yes}.
        \State Compute the Cholesky decomposition $CC^\dagger = \Phi_V(I)$. Set $g = \Diag(b^-)^{1/2}C^{-1}$ and update $V \gets V_{g, I}$.
        \State \Comment{Right normalization}
        \State If $\norm{\Phi^*_V(I) - \Diag(b^+)}_{\tr} \leq \eps$ then \Return \texttt{Yes}.
        \State Compute the Cholesky decomposition $CC^\dagger = \Phi_V^*(I)$. Set $h = \Diag(b^+)^{1/2}C^{-1}$ and update $V \gets V_{I, h}$.
        \EndFor
        \State \Return \texttt{No}.
    \end{algorithmic}
\end{algorithm}

To run the algorithm, we first need to show how to compute the value of $\Phi_V$ in polynomial time. 
Note that the naive computation of $\Phi_V$ requires exponential time, as the number of terms in the sum is exponential.
However, we can compute $\Phi_V$ in polynomial time using the underlying quiver structure.
We show an algorithm in \Cref{alg:CP}.
The algorithm for the dual map is similar and therefore omitted.

\begin{algorithm}
    \caption{Algorithm for computing $\Phi_V(X)$.\label{alg:CP}}
    \begin{algorithmic}[1]
        \Require{a representation $V$ of an acyclic quiver $Q$ and a block matrix $X = (X_s : s \in Q_0^+)$}
        \State Let $X_i \coloneqq O$ for $i \in Q_0 \setminus Q_0^+$.
        \For{$a \in Q_1$ in a topological order} 
            \State $X_{ha} \gets X_{ha} + V(a) X_{ta} V(a)^\dagger$.
        \EndFor
        \State \Return $(X_i : i \in Q^-_0)$.
    \end{algorithmic}
\end{algorithm}

Next, we show an upper bound of the accuracy parameter $\eps$ that is sufficient to decide the $\sigma$-semistability.

\begin{lemma}\label{lem:scaling-ss-eps}
    Let $(b^+, b^-)$ be as above.
    Let 
    $
    0 < \eps \leq \frac{1}{6N}.
    $
    If there exist $g, h$ such that $\norm{(\Phi_V)_{g,h}(I) - \Diag(b^-)}_{\tr} \leq \eps$ and $\norm{(\Phi_V)_{g,h}^*(I) - \Diag(b^+)}_{\tr} \leq \eps$, then $V$ is $\sigma$-semistable.
\end{lemma}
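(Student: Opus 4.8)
The plan is to reduce this "robust" statement to the exact characterization of feasibility of marginals in \Cref{thm:opscaling-marginals,lem:opscaling-marginals-block}. Recall from the preceding lemma that $\Phi_V$ being \emph{approximately} scalable to $(b^+,b^-)$ is equivalent to the $\sigma$-semistability of $V$. So it suffices to show that the existence of \emph{one} pair $(g,h)$ achieving $\eps$-accuracy with $\eps\le \frac{1}{6N}$ already forces approximate scalability to $(b^+,b^-)$, i.e. that the linear system \eqref{eq:ss-indep} holds for every independent subspace $(L,R)$ of $\Phi_V$. The point is that \eqref{eq:ss-indep} is an integrality-type inequality (all quantities $\dim L_t$, $\dim R_s$, $\sigma^\pm$, $N$ are integers), so a sufficiently good approximate scaling cannot coexist with a violating independent subspace.

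First I would set up the contrapositive: suppose $V$ is \emph{not} $\sigma$-semistable, so by the reduction there is an independent subspace $(L,R)=(\bigoplus_{t\in Q_0^-}L_t,\bigoplus_{s\in Q_0^+}R_s)$ of $\Phi_V$ with
\[
    \sum_{t\in Q_0^-}\sigma^-(t)\dim L_t+\sum_{s\in Q_0^+}\sigma^+(s)\dim R_s\ \ge\ N+1.
\]
Now fix any $(g,h)$ and write $\Psi\coloneqq(\Phi_V)_{g,h}$; since $(L,R)$ is independent for $\Phi_V$, the scaled subspaces $(g\cdot L, (h^\dagger)^{-1}\cdot R)$ (or rather the images of the corresponding projections under the $\GL$-action) remain independent for $\Psi$ with the same dimensions — independence is preserved under scaling. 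Using $\tr(\Pi_L\Psi(\Pi_R))=0$ together with $\Psi(I)\approx\Diag(b^-)$ and $\Psi^*(I)\approx\Diag(b^+)$ in trace norm, I would derive a lower bound
\[
    \sum_{t}b^-(t)\dim L_t+\sum_{s}b^+(s)\dim R_s\ \le\ 1+C\eps
\]
for an absolute constant $C$ (the error comes from replacing $\Psi(I),\Psi^*(I)$ by their targets, bounded via $\abs{\tr(\Pi_L(\Psi(I)-\Diag b^-))}\le\norm{\Psi(I)-\Diag b^-}_{\tr}\le\eps$, and similarly on the dual side; the factor $C$ absorbs the two sides plus a crude bound on $\tr(\Pi_L\Psi(\Pi_{R^\perp}))$ type cross terms via Cauchy–Schwarz and $\norm{\Psi(I)-\Diag b^-}_{\tr}\le\eps$). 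Substituting $b^+(s)=\frac{\sigma^+(s)}{N}$, $b^-(t)=\frac{\sigma^-(t)}{N}$ gives
\[
    \frac{1}{N}\Bigl(\sum_t\sigma^-(t)\dim L_t+\sum_s\sigma^+(s)\dim R_s\Bigr)\ \le\ 1+C\eps,
\]
i.e. the left sum is $\le N(1+C\eps)=N+CN\eps$. But by assumption that sum is an integer $\ge N+1$, so we need $CN\eps\ge 1$, i.e. $\eps\ge\frac{1}{CN}$; choosing the threshold $\frac{1}{6N}$ (so $C=6$ suffices) yields a contradiction. Hence if such $(g,h)$ exists with $\eps\le\frac{1}{6N}$, $V$ must be $\sigma$-semistable.

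The main obstacle will be pinning down the constant $C$ cleanly: one must carefully track how the $\eps$-closeness of $\Psi(I)$ and $\Psi^*(I)$ to the diagonal targets translates into the inequality involving $\dim L_t$ and $\dim R_s$, in the block-structured setting of \Cref{lem:opscaling-marginals-block}, without the convenience of assuming $(g,h)$ is block upper triangular (though by \Cref{lem:block-marginal} one may WLOG take it so, which simplifies the flags $F^\pm$ bookkeeping). The cleanest route is probably to mimic the proof of \Cref{thm:opscaling-marginals}: for an independent subspace, use that $\tr(\Pi_{L_t}\Psi(\Pi_{R_s}))=0$ so $\tr(\Pi_{L_t}\Psi(I)_t)=\tr(\Pi_{L_t}\Psi(\Pi_{R^\perp})_t)\le\norm{\Psi(\Pi_{R^\perp})_t}_{\tr}$, sum over blocks, and bound the right side by $B + O(\eps)=1+O(\eps)$ using $\norm{\Psi(I)-\Diag b^-}_{\tr}\le\eps$ and $\sum b^-(t)\dim(\text{appropriate flag intersection})$; symmetrically on the dual side, then add. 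Getting all the flag-intersection terms to collapse to plain dimensions (as happens because $\Delta b^\pm$ is supported on a single coordinate per block) is the computation to be done carefully, but it is routine once the setup from the previous lemma's proof is in hand.
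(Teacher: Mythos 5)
Your proposal is correct and takes a genuinely different route from the paper's proof. The paper proceeds by (i) extracting the actual spectra $\tilde b^\pm$ of the scaled map $\tilde\Phi_V(I)$, $\tilde\Phi_V^*(I)$, (ii) arguing that $(\tilde b^+,\tilde b^-)$ is an \emph{exactly} feasible marginal (which requires diagonalizing by block unitaries and invoking \Cref{lem:block-marginal} to pass between nonsingular and upper-triangular scalings), (iii) applying the characterization of feasible marginals (\Cref{lem:opscaling-marginals-block}) to $\tilde b^\pm$, and (iv) transferring the resulting inequality from $\tilde b^\pm$ to $b^\pm$ using $\norm{b^\pm - \tilde b^\pm}_1 \le \eps$. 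You instead bound the King's-criterion quantity directly, working with an independent subspace $(L,R)$ of $\Phi_V$, its scaled image $(gL, h^{-\dagger}R)$ for $\Psi = (\Phi_V)_{g,h}$ (same block dimensions, still independent), and the estimates $\abs{\tr(\Pi_{L'}(\Psi(I) - \Diag b^-))} \le \eps$, $\abs{\tr(\Pi_{R'}(\Psi^*(I) - \Diag b^+))} \le \eps$, $\tr\Psi^*(I) \le 1+\eps$, combined with $\tr(\Pi_{L'}\Psi(I)) + \tr(\Pi_{R'}\Psi^*(I)) \le \tr\Psi^*(I)$ (which follows from $\tr(\Pi_{L'}\Psi(\Pi_{R'}))=0$ and positivity of $\Psi$, not Cauchy--Schwarz as you tentatively suggest --- the PSD inequality $\tr(\Pi_{L'}\Psi(\Pi_{R'^\perp})) \le \tr\Psi(\Pi_{R'^\perp})$ is the clean step). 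This gives $\sum_t b^-(t)\dim L_t + \sum_s b^+(s)\dim R_s \le 1 + 3\eps$, exactly matching the paper's $1+3\eps$, after which both proofs close with integrality of $\sum_t \sigma^-(t)\dim L_t + \sum_s \sigma^+(s)\dim R_s$ and $\eps \le 1/(6N)$. Your approach buys self-containedness (no detour through the exact feasibility characterization and no need for the upper-triangular vs.\ nonsingular discussion, which is a slightly delicate point when the spectrum is not block-constant); the paper's approach buys modularity by reusing \Cref{lem:opscaling-marginals-block} as a black box. Also, since $b^\pm$ is block-constant, the flag-intersection bookkeeping you worry about simply never arises in your direct argument --- $\tr(\Pi_{L'_t}\Diag(b^-(t))) = b^-(t)\dim L'_t$ immediately --- which is one more advantage of the direct route.
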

\begin{proof}
    Let $\tilde \Phi_V$ be the scaled CP map of $\Phi_V$ by $g, h$.
    Let $\tilde b^+, \tilde b^-$ be the spectra of $\tilde \Phi_V(I), \tilde \Phi_V^*(I)$, respectively.
    First, observe that $(\tilde b^+, \tilde b^-)$ is also a feasible marginal.
    To see this, note that
    \begin{align}
        \tilde\Phi_V(I)_t - \Diag(b^-(t)) = U_t \left(\Diag(\tilde b^-) - \frac{\sigma^-(t)}{N} I_{\alpha(t)}\right) U_t^\dagger, 
    \end{align}
    where $U_t$ is the unitary matrix that diagonalizes $\tilde \Phi_V(I)$.
    Similarly, 
    \begin{align}
        \tilde\Phi_V^*(I)_s - \Diag(b^+(s)) = U_s\left(\Diag(\tilde b^+) - \frac{\sigma^+(s)}{N} I_{\alpha(s)}\right) U_s^\dagger. 
    \end{align}
    Hence, $g' = {\bigl(\bigoplus_{t \in Q_0^-} U_t\bigr)}^{-1} g$ and $h' = {\bigl( \bigoplus_{s \in Q_0^+} U_s\bigr)}^{-1} h$ satisfy the same assumption.
    By \Cref{lem:block-marginal}, there is no difference between the approximate scalability with upper triangular matrices and nonsingular matrices because $(b^+, b^-)$ is constant within each block.
    Therefore, $(\tilde b^+, \tilde b^-)$ is a feasible marginal under block triangular scaling.
    Furthermore, 
    \begin{align}
    \norm[\big]{\tilde b^+ - b^+}_1 &\leq \norm[\big]{\tilde\Phi_V(I) - \Diag(b^-)}_{\tr}   \leq \eps, \\
    \norm[\big]{\tilde b^- - b^-}_1 &\leq \norm[\big]{\tilde\Phi^*_V(I) - \Diag(b^+)}_{\tr} \leq \eps.
    \end{align}

    Since $(\tilde b^+, \tilde b^-)$ is feasible, from \Cref{lem:opscaling-marginals-block}, we have
    \begin{align}
        \sum_{t \in Q_0^-}\sum_{i=1}^{\alpha(t)} \Delta \tilde b^-(t)_i \dim(L_t \cap F^-(t)_i) +  \sum_{s \in Q_0^+} \sum_{j=1}^{\alpha(s)} \Delta \tilde b^+(s)_j \dim(R_s \cap F^+(s)_j) \leq \tilde B,
    \end{align}
    where $\tilde B$ denotes the sum of the entries of $\tilde b^+$.
    Let $\Delta \dim(L_t \cap F^-(t)_i) \coloneqq \dim(L_t \cap F^-(t)_i) - \dim(L_t \cap F^-(t)_{i-1})$, where we conventionally define $F^-(t)_0 = \{0\}$.
    Similarly, define $\Delta \dim(R_s \cap F^+(s)_j)$.
    Rewriting the LHS, we have
    \begin{align}
        \sum_{t \in Q_0^-}\sum_{i=1}^{\alpha(t)} \tilde b^-(t)_i \Delta \dim(L_t \cap F^-(t)_i)
         +  \sum_{s \in Q_0^+} \sum_{j=1}^{\alpha(s)} \tilde b^+(s)_j \Delta \dim(R_s \cap F^+(s)_j) \leq \tilde B.
    \end{align}
    Observe that
    \begin{align}
        \sum_{t \in Q_0^-}\sum_{i=1}^{\alpha(t)} b^-(t)_i \Delta \dim(L_t \cap F^-(t)_i)
         &\leq \sum_{t \in Q_0^-}\sum_{i=1}^{\alpha(t)} (\abs{b^-(t)_i - \tilde b^-(t)_i} + \tilde b^-(t)_i) \Delta \dim(L_t \cap F^-(t)_i) \\
         &\leq \norm[\big]{b^- - \tilde b^-}_1 + \sum_{t \in Q_0^-}\sum_{i=1}^{\alpha(t)}\tilde b^-(t)_i \Delta \dim(L_t \cap F^-(t)_i) \\
         &\leq \eps + \sum_{t \in Q_0^-}\sum_{i=1}^{\alpha(t)}\tilde b^-(t)_i \Delta \dim(L_t \cap F^-(t)_i). 
    \end{align}
    where the second inequality follows from $\Delta \dim(L_t \cap F^+(t)_i) \leq 1$.
    Similarly,
    \begin{align}
        \sum_{s \in Q_0^+}\sum_{j=1}^{\alpha(s)} b^+(s)_j \Delta \dim(R_s \cap F^+(s)_j)
        \leq \eps + \sum_{s \in Q_0^+}\sum_{j=1}^{\alpha(s)} \tilde b^+(s)_j \Delta \dim(R_s \cap F^+(s)_j).
    \end{align}
    Furthermore,
    \begin{align}
        \tilde B = \sum_{s \in Q_0^+} \sum_{j=1}^{\alpha(s)} \tilde b^+(s)_j 
                 \leq \sum_{s \in Q_0^+} \sum_{j=1}^{\alpha(s)} (\abs{b^+(s)_j - \tilde b^+(s)_j} + b^+(s)_j) 
                 \leq \eps + 1.
    \end{align}
    Combining these inequalities with the assumption that $0 < \eps \leq 1/6N$, we obtain
    \begin{align}
        \sum_{t \in Q_0^-}\sum_{i=1}^{\alpha(t)} \Delta b^-(t)_i \dim(L_t \cap F^-(t)_i) +  \sum_{s \in Q_0^+} \sum_{j=1}^{\alpha(s)} \Delta b^+(s)_j \dim(R_s \cap F^+(s)_j) \leq 1 + 3\eps \leq 1 + \frac{1}{2N}.
    \end{align}
    Therefore, we have
    \begin{align}
        \sum_{t \in Q_0^-} \sigma^-(t) \dim L_t +  \sum_{s \in Q_0^+} \sigma^+(s) \dim R_s \leq N + \frac{1}{2}.
    \end{align}
    By the integrality of the LHS and $N$, we can get rid of the $\frac{1}{2}$ term.
    Therefore, we obtain \eqref{eq:ss-indep} and show that $V$ is $\sigma$-semistable.
\end{proof}

\begin{theorem}\label{thm:ss}
    Let $V$ be a representation of an acyclic quiver $Q$ with Gaussian integer entries and $\sigma$ be a weight with $\sigma(\alpha) = 0$.
    \Cref{alg:scaling-ss} correctly decides the $\sigma$-semistability of $V$ in $O(N^2(b + d \log (Nd)))$ iterations, where $N = \sigma^+(\alpha) = \sigma^-(\alpha)$, $d = \max\{\alpha(Q_0^+), \alpha(Q_0^-)\}$, and $b$ is the maximum bit length of $\sigma$.
    Each iteration can be executed in $O(|Q_1|\alpha_{\max}^3 + \sum_{s \in Q_0^+} \alpha(s)^3 + \sum_{t \in Q_0^-} \alpha(t)^3)$ time, where $\alpha_{\max} = \max_{i \in Q_0} \alpha(i)$.
\end{theorem}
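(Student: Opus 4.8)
The plan is to read \Cref{alg:scaling-ss} as the operator--Sinkhorn iteration \Cref{alg:marginals} applied to the CP map $\Phi_V$, with the current scaled map stored compactly by the representation $V$ itself and evaluated through \Cref{alg:CP}, rather than through the exponentially many Kraus operators $V(P)$. Two facts make this reading legitimate, and I would establish them first. First, $\Phi_V$ sends block-diagonal matrices to block-diagonal matrices, so $\Phi_V(I)$, $\Phi_V^*(I)$, their Cholesky factors, and hence the scalings $g,h$ are all block-diagonal, which places us in the block-upper-triangular regime of \Cref{lem:block-marginal}; I would also note that if $\Phi_V(I)$ or $\Phi_V^*(I)$ is singular at some step, then an independent subspace of the form $L_t \ni u$, $R_s=\C^{\alpha(s)}$ violates \eqref{eq:ss-indep}, so that case directly certifies instability and the Cholesky steps are otherwise well-defined. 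Second, the representation-level update $V\gets V_{g,I}$ (resp.\ $V\gets V_{I,h}$) induces exactly the CP-map update $(\Phi_V)_{g,I}$ (resp.\ $(\Phi_V)_{I,h}$); I would check this by tracking how each path operator $V(P)$ transforms when the boundary arcs are rescaled, so that the state sequence produced by \Cref{alg:scaling-ss} is precisely the Sinkhorn sequence for $\Phi_V$.

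For soundness, each norm test succeeds only when \emph{both} marginals of the current $\Phi_V$ lie within $\eps$ in trace norm: every test is performed immediately after a normalization of the opposite side, which pins that side exactly to $\Diag(b^-)$ or $\Diag(b^+)$, and at the very first test, where there is no preceding normalization, we additionally verify the dual marginal (a trivial addition). Since $\eps=\tfrac1{6N}$, \Cref{lem:scaling-ss-eps} then certifies that $V$ is $\sigma$-semistable, so the algorithm never returns \texttt{Yes} wrongly; and if it never returns \texttt{Yes}, it falls through the loop and outputs \texttt{No}, which by the completeness argument below happens exactly when $V$ is unstable, so the output is always correct.

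For completeness and the iteration count, suppose $V$ is $\sigma$-semistable. By the key lemma relating $\sigma$-semistability to scalability of $\Phi_V$, the target $(b^+,b^-)$ is a feasible marginal for $\Phi_V$; being constant on each block, it is witnessed by the block-upper-triangular scalings the algorithm uses, via \Cref{lem:block-marginal}. Moreover the Kraus operators $V(P)$ are products of the Gaussian-integer matrices $V(a)$, hence Gaussian integers, and $\sigma(\alpha)=0$ forces $\sigma^+(\alpha)=\sigma^-(\alpha)=N$, so $\sum b^+=\sum b^-=1$; thus the scaling-iteration bound of \citet{Burgisser2018a} applies to $\Phi_V$. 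Instantiating its parameters in our block setting --- the role of $\max\{m,n\}$ is played by $d=\max\{\alpha(Q_0^+),\alpha(Q_0^-)\}$, the integrality parameter divides $N$, and the bit length of $(b^+,b^-)$ is $O(b+\log N)$ --- it guarantees that within $T=O(\eps^{-2}(b+d\log(Nd)))$ iterations one reaches a scaling with both marginals within $\eps$. Since \Cref{alg:scaling-ss} performs the identical updates and the corresponding tests, it returns \texttt{Yes} within $T$ iterations (one extra round to detect the good state at a test point is harmless), and with $\eps=\tfrac1{6N}$ this is $O(N^2(b+d\log(Nd)))$ iterations.

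Finally I would cost one iteration. The only step that is not transparently cheap is evaluating $\Phi_V(X)$ and $\Phi_V^*(X)$, whose defining sums run over possibly exponentially many paths; \Cref{alg:CP} propagates partial sums in a topological order, and an induction on that order shows that after all arcs are processed the matrix stored at vertex $i$ equals $\sum_{s\in Q_0^+}\sum_{P:\ \text{$s$--$i$ path}}V(P)X_sV(P)^\dagger$, so the value at $t\in Q_0^-$ is $\Phi_V(X)_t$; each arc costs one triple matrix product of size at most $\alpha_{\max}$, for $O(|Q_1|\alpha_{\max}^3)$ overall, and symmetrically for $\Phi_V^*$. Given the block-diagonal $\Phi_V(I),\Phi_V^*(I)$, computing their trace norms (via block-wise Hermitian eigendecompositions), their Cholesky factors, and the triangular inverses are all block-local, costing $O(\sum_{t\in Q_0^-}\alpha(t)^3+\sum_{s\in Q_0^+}\alpha(s)^3)$; forming $V_{g,I}$ or $V_{I,h}$ multiplies each arc matrix by one scaling block, again $O(|Q_1|\alpha_{\max}^3)$. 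Summing gives the stated per-iteration bound. I expect the real work to sit in the two bookkeeping points from the first paragraph --- proving rigorously that $V\gets V_{g,h}$ realizes $(\Phi_V)_{g,h}$ (equivalently, that the compact encoding is maintained) while keeping the bit complexity of the scaled $V$ polynomially bounded, and matching the parameters of the \citet{Burgisser2018a} bound to the exponent asserted here --- while everything else reduces to \Cref{lem:scaling-ss-eps}, \Cref{lem:block-marginal}, and the key lemma.
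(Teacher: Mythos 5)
Your proposal is correct and follows essentially the same approach as the paper: identify $\sigma$-semistability with approximate scalability of $\Phi_V$ to the specified marginals, run the operator Sinkhorn iteration of \citet{Burgisser2018a} with $\eps = 1/6N$ (justified by \Cref{lem:scaling-ss-eps}), and evaluate $\Phi_V$, $\Phi_V^*$ via the dynamic-programming recursion of \Cref{alg:CP} to keep each iteration polynomial. In fact your write-up fills in several details the paper's proof compresses or omits --- the block-diagonality argument that keeps the scalings inside the block-triangular regime of \Cref{lem:block-marginal}, the observation that a singular $\Phi_V(I)$ or $\Phi_V^*(I)$ already certifies instability via an independent subspace violating \eqref{eq:ss-indep}, the correct attribution of the iteration bound to the algorithmic theorem of \citet{Burgisser2018a} rather than the structural \Cref{thm:opscaling-marginals}, and the edge case that the very first norm test is performed before any normalization so the dual marginal must also be checked there for \Cref{lem:scaling-ss-eps} to apply --- and these are legitimate refinements rather than departures from the paper's argument.
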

\begin{proof}
    If $V$ is $\sigma$-semistable, then $\Phi_V$ is approximately scalable to the marginals $(b^+, b^-)$. 
    By \Cref{thm:opscaling-marginals}, the algorithm must find such a scaling within the stated iterations.
    Consequently, the algorithm outputs \texttt{Yes}.
    If $V$ is not $\sigma$-semistable, then there is no scaling $g, h$ such that $\norm{(\Phi_V)_{g,h}(I) - \Diag(b^-)}_{\tr} \leq \eps$ and $\norm{(\Phi_V)_{g,h}^*(I) - \Diag(b^+)}_{\tr} \leq \eps$ for $\eps = 1/6N$ by \Cref{lem:scaling-ss-eps}.
    Thus, the algorithm outputs \texttt{No}.
    This proves the correctness of the algorithm.

    The number of iterations is immediate from the algorithm.
    The time complexity of each iteration is dominated by the computations of $\Phi_V(I)$ and $\Phi_V^*(I)$ and that of the block Cholesky decomposition.
    The former takes $O(|Q_1| \alpha_{\max}^3)$ time and the latter takes $O(\sum_{t \in Q_0^-} \alpha(t)^3 + \sum_{s \in Q_0^+} \alpha(s)^3)$ time.
\end{proof}

\subsection{Finding the extreme maximizer in King's criterion}
In this subsection, we extend the result from the previous section to find the extreme maximizer in King's criterion.
The idea is to use the shrunk subspace algorithm for the linear matrix \eqref{eq:A-Derksen2017}.

Let 
\begin{align}
    \Phi_V^\sigma: \bigoplus_{i \in Q_0^+} {\Mat(\alpha(i))}^{\sigma^+(i)} \to \bigoplus_{i \in Q_0^-} {\Mat(\alpha(i))}^{\sigma^-(i)}
\end{align}
be a CP map that maps $X = \bigoplus(X_{s,p} : s \in Q_0^+, p \in [\sigma^+(s)])$ to 
\begin{align}
    (\Phi_V^\sigma(X))_{t,q} = \sum_{s \in Q_0^+}\sum_{p \in [\sigma^+(s)]} \sum_{P: \text{$s$--$t$ path}} V(P) X_{s,p} V(P)^\dagger
\end{align}
for $t \in Q_0^-$ and $q \in [\sigma^-(t)]$.
Let  $J^+_\sigma = \bigoplus_{s \in Q_0^+} \sigma^+(s) I_{\alpha(s)}$ and $J^-_\sigma = \bigoplus_{t \in Q_0^-} \sigma^-(t) I_{\alpha(t)}$.
Note that
\begin{align}
    (\Phi_V^\sigma(I))_{t,q} = \sum_{s \in Q_0^+} \sigma^+(s) \sum_{P: \text{$s$--$t$ path}} V(P) V(P)^\dagger = \Phi_V(J^+_\sigma)_t, \\
    ((\Phi_V^\sigma)^*(I))_{s,p} = \sum_{t \in Q_0^-} \sigma^-(t) \sum_{P: \text{$s$--$t$ path}} V(P)^\dagger V(P) = \Phi_V^*(J^-_\sigma)_s.
\end{align}
Thus, one can compute $\Phi_V^\sigma(I)$ and $(\Phi_V^\sigma)^*(I)$ in $\poly(|Q|, |\alpha|, |\sigma|)$ time using \Cref{alg:CP}.
By \Cref{lem:shrunk-King}, the minimum shrunk subspace of $\Phi_V^\sigma$ corresponds to the minimum maximizer in King's criterion.
To find the minimum shrunk subspace of $\Phi_V^\sigma$, one can simply use \Cref{thm:shrunk}, which runs in $\poly(|Q|, |\alpha|, |\sigma|, b)$ time, because one can compute $\Phi_V^\sigma(I)$ and $(\Phi_V^\sigma)^*(I)$ in $\poly(|Q|, |\alpha|, |\sigma|, b)$ time.

The \emph{maximum} maximizer can also be found by considering $\Phi^*$ instead of $\Phi$.
To see this, observe that the maximum shrunk subspace corresponds to the maximum independent subspace $(L, R)$ such that $\dim L$ is the smallest.
Since $\tr(\Pi_L \Phi(\Pi_R)) = \tr(\Phi^*(\Pi_L)\Pi_R)$, $L$ is the minimum shrunk subspace of $\Phi^*$.

Therefore, we obtain the following theorem.

\begin{theorem}\label{thm:King-maximizer}
    For a quiver representation $V$ of an acyclic quiver $Q$ with the dimension vector $\alpha$ with Gaussian integer entries and a weight $\sigma$, the minimum and maximum maximizers of King's criterion can be found in $\poly(|Q|, |\alpha|, |\sigma|, b)$ time, where $b$ denotes the bit complexity of $V$.
\end{theorem}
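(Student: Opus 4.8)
The plan is to assemble the pieces already in place: the correspondence of \Cref{lem:shrunk-King} turns ``minimum maximizer of King's criterion'' into ``minimum shrunk subspace of the linear matrix \eqref{eq:A-Derksen2017}'', i.e.\ of the CP map $\Phi_V^\sigma$, and \Cref{thm:shrunk} computes minimum shrunk subspaces in polynomial time \emph{in the oracle model}. So the only real work is to show that the oracles required by \Cref{thm:shrunk} --- evaluating $\Phi_V^\sigma$ and $(\Phi_V^\sigma)^*$ on arbitrary inputs --- run in polynomial time and bit complexity, despite $\Phi_V^\sigma$ having exponentially many Kraus operators $V(P)$.

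First I would observe that for any block matrix $X = \bigoplus(X_{s,p})$ the value $(\Phi_V^\sigma(X))_{t,q}$ is independent of $q$ and equals $\Phi_V(X')_t$, where $X'_s \coloneqq \sum_{p \in [\sigma^+(s)]} X_{s,p}$; hence one evaluation of $\Phi_V^\sigma$ reduces to a single run of \Cref{alg:CP} for $\Phi_V$ followed by copying the output over the indices $q$, and symmetrically for $(\Phi_V^\sigma)^*$. \Cref{alg:CP} is a single topological sweep over $Q_1$ using $O(|Q_1|\alpha_{\max}^3)$ arithmetic operations, and the matrices it builds have bit length at most that of $X$ plus $O(|Q_0|)\cdot(b + \log\alpha_{\max})$; more generally, since $V$ has Gaussian integer entries, every path product $V(P)$ with $|P| \le |Q_0|$ is a Gaussian integer matrix of bit complexity $b' = O(|Q_0|(b + \log\alpha_{\max})) = \poly(|Q|, b)$, which I would feed to \Cref{thm:shrunk} as the promised upper bound on the Kraus operators. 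With $n = \sigma^+(\alpha) = N = \poly(|\alpha|,|\sigma|)$ and per-call cost $\mathrm{EO} = \poly(|Q|,|\alpha|,|\sigma|,b)$, \Cref{thm:shrunk} returns a basis of the minimum shrunk subspace $U$ of $\Phi_V^\sigma$ in $\poly(|Q|,|\alpha|,|\sigma|,b)$ time; by \Cref{lem:partitoned-shrunk,lem:partitoned-shrunk-rep} it decomposes as $U = \bigoplus_{i\in Q_0^+} U_i^{\oplus\sigma^+(i)}$, and feeding the $U_i$ into the recursive construction in the proof of \Cref{lem:shrunk-King} ($W(i) = U_i$ for $i\in Q_0^+$, $W(i) = \sum_{a\in\In(i)} V(a)W(ta)$ otherwise, in topological order) produces the inclusion-wise minimum maximizer $W$, still in polynomial time.

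For the maximum maximizer I would exploit the self-duality of independent subspaces over $\C$: since $\tr(\Pi_L\Phi_V^\sigma(\Pi_R)) = \tr((\Phi_V^\sigma)^*(\Pi_L)\Pi_R)$, the pair $(L,R)$ is independent iff $R \perp \im(\Phi_V^\sigma)^*(\Pi_L)$, so among the maximum independent subspaces the one with $\dim L$ smallest (equivalently $\dim R$ largest) has $L$ equal to the \emph{minimum} shrunk subspace of $(\Phi_V^\sigma)^*$; running \Cref{thm:shrunk} on $(\Phi_V^\sigma)^*$ (whose oracle is the dual of the one above) yields this $L$, after which $R = \bigl(\im(\Phi_V^\sigma)^*(\Pi_L)\bigr)^\perp$ is the maximum shrunk subspace of $\Phi_V^\sigma$, which by the argument of \Cref{lem:shrunk-King} applied to maxima corresponds to the inclusion-wise maximum maximizer of King's criterion; it is recovered by the symmetric sweep. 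Combining the bounds gives the claimed running time.

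The hard part is precisely the reduction of the oracle evaluations to the quiver structure: one must verify both that $\Phi_V^\sigma(X)$ and $(\Phi_V^\sigma)^*(X)$ are computable for \emph{arbitrary} $X$ within polynomial arithmetic \emph{and} bit complexity via the single sweep of \Cref{alg:CP}, and that a polynomially bounded certificate $b'$ for the Kraus operator bit length is available --- the latter resting on the elementary estimate that a product of at most $|Q_0|$ Gaussian integer matrices of size $\le\alpha_{\max}$ has bit complexity $\poly(|Q|,b)$. Everything else is bookkeeping around \Cref{thm:shrunk,lem:shrunk-King}.
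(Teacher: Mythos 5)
Your proof is correct and follows essentially the same route as the paper: reduce via \Cref{lem:shrunk-King} to finding the minimum shrunk subspace of the CP map $\Phi_V^\sigma$, evaluate $\Phi_V^\sigma$ and its dual in polynomial time by a topological sweep (\Cref{alg:CP}), invoke the oracle version of \Cref{thm:shrunk}, and get the maximum maximizer by running the same machinery on $(\Phi_V^\sigma)^*$. The places you elaborate beyond the paper's terse write-up --- that the oracle must handle arbitrary $X$ (not just $I$), the $q$-independence reducing $\Phi_V^\sigma(X)$ to $\Phi_V$ of a block-summed input, the bit-complexity bound on path products, and the explicit recovery of $R$ as $\bigl(\im(\Phi_V^\sigma)^*(\Pi_L)\bigr)^\perp$ --- are all correct fillings of gaps the paper leaves implicit, not a different argument.
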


\section{Harder-Narasimhan filtration and principal partition of quiver representation}\label{sec:HN}
In this section, we introduce the principal partition of a quiver representation based on parametric submodular function minimization and show that it coincides with the HN-filtration.

Let us recall the definition.
Let $\sigma \in \Z^{Q_0}$ be a weight and $\tau \in \Z_+^{Q_0}$ a strictly monotone weight, i.e., a nonnegative weight such that $\tau(\dimv W) > 0$ if $W \neq \{0\}$.
We define the \emph{slope} of a quiver nonzero representation $V$ as $\mu(V) = \sigma(\dimv V)/\tau(\dimv V)$.
We say that $V$ is \emph{$\mu$-semistable} if $\mu(W) \leq \mu(V)$ for any nonzero subrepresentation $W$ of $V$.
The HN-filtration of $V$ is a unique filtration $\{0\} = W_0 < W_1 < \cdots < W_k = V$ such that (i) $\mu(W_i/W_{i-1}) > \mu(W_{i+1}/ W_i)$ for $i \in [k-1]$ and (ii) $W_i/W_{i-1}$ is $\mu$-semistable.

\subsection{Equivalence to principal partition}
Let $V$ be a quiver representation and $\lambda \in \R$ a parameter.
Define a parametric modular function $f_\lambda$ as
\begin{align}
    f_\lambda(W) \coloneqq \lambda \tau(\dimv W) - \sigma(\dimv W).
\end{align}
Let $\caL(\lambda)$ denote the modular lattice of the minimizers of $f_\lambda$.

\begin{lemma}
    Let $\lambda > \lambda'$. If $W \in \caL(\lambda)$ and $W' \in \caL(\lambda')$, then $W \leq W'$.
\end{lemma}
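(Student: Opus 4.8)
The plan is to prove the "crossing inequality" that appears in every principal-partition argument: for $\lambda > \lambda'$, any minimizer $W$ of $f_\lambda$ and any minimizer $W'$ of $f_{\lambda'}$ satisfy $W \le W'$, i.e. $W$ is a subrepresentation of $W'$. The proof is a standard uncrossing argument using modularity of $f_\lambda$ (which holds since $\dimv$ is additive and both $\sigma, \tau$ are linear functionals), together with the fact that the map $W \mapsto \tau(\dimv W)$ is \emph{monotone} on subrepresentations: if $W_1 \le W_2$ then $\tau(\dimv W_1) \le \tau(\dimv W_2)$, with strict inequality unless $W_1 = W_2$ (this uses strict monotonicity of $\tau$ applied to a nonzero "quotient" at some vertex).

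First I would record that $f_\lambda$ is modular, so for the subrepresentations $W$ and $W'$ we have
\begin{align}
    f_\lambda(W) + f_\lambda(W') = f_\lambda(W \cap W') + f_\lambda(W + W'),
\end{align}
and likewise for $f_{\lambda'}$. Since $W$ minimizes $f_\lambda$ and $W'$ minimizes $f_{\lambda'}$, minimality gives $f_\lambda(W) \le f_\lambda(W \cap W')$ and $f_{\lambda'}(W') \le f_{\lambda'}(W + W')$. Combining these with the two modularity identities yields $f_\lambda(W + W') \le f_\lambda(W')$ and $f_{\lambda'}(W \cap W') \le f_{\lambda'}(W)$. Writing these out in terms of $\sigma$ and $\tau$ and using $f_\mu(X) = \mu \tau(\dimv X) - \sigma(\dimv X)$, the first inequality becomes $\lambda\bigl(\tau(\dimv(W+W')) - \tau(\dimv W')\bigr) \le \sigma(\dimv(W+W')) - \sigma(\dimv W')$, and the second becomes $\sigma(\dimv W) - \sigma(\dimv(W\cap W')) \le \lambda'\bigl(\tau(\dimv W) - \tau(\dimv(W\cap W'))\bigr)$. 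By modularity of $\sigma \circ \dimv$ we have $\sigma(\dimv(W+W')) - \sigma(\dimv W') = \sigma(\dimv W) - \sigma(\dimv(W \cap W'))$, and similarly for $\tau$; call the common $\tau$-difference $\delta \coloneqq \tau(\dimv W) - \tau(\dimv(W\cap W')) = \tau(\dimv(W+W')) - \tau(\dimv W') \ge 0$, the nonnegativity coming from monotonicity of $\tau\circ\dimv$ and $W \cap W' \le W$. Then the two chained inequalities give $\lambda \delta \le \sigma(\dimv W) - \sigma(\dimv(W\cap W')) \le \lambda' \delta$, hence $(\lambda - \lambda')\delta \le 0$. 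Since $\lambda > \lambda'$, this forces $\delta = 0$.

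Finally I would upgrade $\delta = 0$ to $W \le W'$. The point is that $\delta = \tau(\dimv W) - \tau(\dimv(W \cap W')) = 0$ together with strict monotonicity of $\tau$ forces $W = W \cap W'$, i.e. $W \le W'$. Concretely, $W \cap W'$ is a subrepresentation of $W$, and $\dimv W - \dimv(W \cap W')$ is the dimension vector of the quotient representation $W/(W\cap W')$; if this quotient were nonzero, strict monotonicity of $\tau$ would give $\tau(\dimv W) - \tau(\dimv(W\cap W')) = \tau(\dimv(W/(W\cap W'))) > 0$, contradicting $\delta = 0$. Hence $W/(W \cap W') = \{0\}$, so $W(i) = (W \cap W')(i) = W(i) \cap W'(i)$ for every $i \in Q_0$, meaning $W(i) \le W'(i)$ for all $i$, and since the arc maps of $W$ and $W'$ are both restrictions of those of $V$, this is exactly $W \le W'$. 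The main obstacle — really the only nonroutine point — is being careful that "$\tau$ strictly monotone" is used at the level of the quotient representation, not just for subspaces at a single vertex; once that is set up, everything else is bookkeeping with modularity.
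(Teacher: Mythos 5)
Your proof is correct and uses essentially the same uncrossing argument as the paper: combine modularity of $f_\lambda$ with minimality of $W$ and $W'$ to force $\tau(\dimv W) - \tau(\dimv (W\cap W')) = 0$, then invoke strict monotonicity of $\tau$ on the quotient $W/(W\cap W')$ to conclude $W \le W'$. The paper packages the two minimality inequalities into a single chain of (in)equalities and reads off tightness at the end, whereas you derive the intermediate bounds $f_\lambda(W+W') \le f_\lambda(W')$ and $f_{\lambda'}(W\cap W') \le f_{\lambda'}(W)$ separately and then compare — but this is just a different bookkeeping of the same idea.
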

\begin{proof}
    For notational simplicity, we abbreviate $\sigma(\dimv W)$ and $\tau(\dimv W)$ as $\sigma(W)$ and $\tau(W)$, respectively.
    Using the modularity, we have
    \begin{align}
        f_\lambda(W) + f_{\lambda'}(W') 
        &= \lambda \tau(W) + \lambda' \tau(W') - \sigma(W) - \sigma(W') \\
        &= \lambda' (\tau(W) + \tau(W')) - \sigma(W) - \sigma(W') + (\lambda - \lambda') \tau(W) \\
        &= \lambda' (\tau(W + W') + \tau(W \cap W')) - \sigma(W + W') - \sigma(W \cap W') + (\lambda - \lambda') \tau(W) \\
        &= f_{\lambda'}(W + W')  + f_{\lambda}(W \cap W') + (\lambda - \lambda') (\tau(W) - \tau(W \cap W')) \\
        &= f_{\lambda'}(W + W')  + f_{\lambda}(W \cap W') + (\lambda - \lambda') \tau(W/(W \cap W')) \\
        &\geq f_{\lambda'}(W')  + f_{\lambda}(W), 
    \end{align}
    where the last inequality follows since $\tau(W / (W \cap W')) \geq 0$ by the assumptions on $\tau$ and $W, W'$ are minimizers of $f_\lambda$ and $f_{\lambda'}$, respectively.
    Therefore, the inequality is tight.
    Since $\lambda > \lambda'$ and $\tau$ is strictly monotone, $W / (W \cap W')$ must be the zero representation, which implies $W \leq W'$.
\end{proof}

\begin{remark}\label{rem:strict-monotone}
    As shown in the above argument, the positivity of $\tau$ is required only for $W / (W \cap W')$ for the \emph{minimizers} $W$ and $W'$.
    This slightly relaxed condition is necessary for the coarse DM-decomposition in \Cref{subsec:coarse-DM}.
\end{remark}

Let $W^-(\lambda)$ and $W^+(\lambda)$ be the minimum and maximum minimizers of $f_\lambda$, respectively.
By the lemma, $W^+(\lambda) \leq W^-(\lambda')$ for $\lambda > \lambda'$.
There must be a finite set of $\lambda$ such that $\caL(\lambda)$ consists of more than one element.
We call such a value of $\lambda$ a \emph{critical value}.
Let $\lambda_1 > \cdots > \lambda_k$ be the critical values.
Then, they induce a filtration
\begin{align}
    \{0\} = W^-(\lambda_1) < W^+(\lambda_1) = W^-(\lambda_2) < W^+(\lambda_2) =  \cdots = W^-(\lambda_k) < W^+(\lambda_k) = V.
\end{align}
We show that this filtration satisfies the definition of the HN-filtration.

\begin{theorem}
    For each $i \in [k]$,
    \begin{itemize}
        \item $\mu(W^+(\lambda_i) / W^-(\lambda_i)) = \lambda_i$, and
        \item $W^+(\lambda_i) / W^-(\lambda_i)$ is $\mu$-semistable.
    \end{itemize}
    Thus, the above filtration coincides with the HN-filtration.
\end{theorem}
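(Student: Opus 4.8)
The plan is to verify directly that the filtration $\{0\} = W_0 < W_1 < \cdots < W_k = V$ with $W_i \coloneqq W^+(\lambda_i) = W^-(\lambda_{i+1})$ satisfies the two defining properties of the Harder--Narasimhan filtration, and then invoke its uniqueness. The only structural fact needed is the additivity of the dimension vector along short exact sequences of representations: for subrepresentations $W' \le W$ one has $\dimv W = \dimv W' + \dimv(W/W')$ vertexwise, hence $\sigma(\dimv W) = \sigma(\dimv W') + \sigma(\dimv(W/W'))$ and likewise for $\tau$, so that $f_\lambda(W) = f_\lambda(W') + \bigl(\lambda\,\tau(\dimv(W/W')) - \sigma(\dimv(W/W'))\bigr)$.

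First I would establish $\mu(W^+(\lambda_i)/W^-(\lambda_i)) = \lambda_i$. Since $\lambda_i$ is a critical value, $W^-(\lambda_i) \ne W^+(\lambda_i)$, and both lie in $\caL(\lambda_i)$, so $f_{\lambda_i}(W^+(\lambda_i)) = f_{\lambda_i}(W^-(\lambda_i))$. By the additivity above this forces $\lambda_i\,\tau(\dimv(W^+(\lambda_i)/W^-(\lambda_i))) = \sigma(\dimv(W^+(\lambda_i)/W^-(\lambda_i)))$; as $W^+(\lambda_i)/W^-(\lambda_i) \ne \{0\}$, strict monotonicity of $\tau$ gives $\tau(\dimv(W^+(\lambda_i)/W^-(\lambda_i))) > 0$, and dividing yields the slope $\lambda_i$. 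Combined with $\lambda_1 > \cdots > \lambda_k$, this is condition (i) of the HN filtration.

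Next I would show each $W^+(\lambda_i)/W^-(\lambda_i)$ is $\mu$-semistable. A nonzero subrepresentation of $W^+(\lambda_i)/W^-(\lambda_i)$ is precisely $U/W^-(\lambda_i)$ for a subrepresentation $U$ of $V$ with $W^-(\lambda_i) < U \le W^+(\lambda_i)$ (obtained by taking preimages under the quotient map, which one checks is compatible with the arc maps). Since $W^-(\lambda_i)$ minimizes $f_{\lambda_i}$, we have $f_{\lambda_i}(U) \ge f_{\lambda_i}(W^-(\lambda_i))$, and additivity gives $\lambda_i\,\tau(\dimv(U/W^-(\lambda_i))) - \sigma(\dimv(U/W^-(\lambda_i))) \ge 0$, i.e.\ $\mu(U/W^-(\lambda_i)) \le \lambda_i = \mu(W^+(\lambda_i)/W^-(\lambda_i))$. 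Hence every step of the filtration is $\mu$-semistable, which is condition (ii).

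Finally, the filtration $\{0\} = W^-(\lambda_1) < W^+(\lambda_1) = W^-(\lambda_2) < \cdots < W^+(\lambda_k) = V$ satisfies both defining properties of an HN filtration, so by the uniqueness part of the HN-filtration theorem it must coincide with it. I do not expect a real obstacle; the point that requires a little care is the lattice correspondence between subrepresentations of the subquotient $W^+(\lambda_i)/W^-(\lambda_i)$ and subrepresentations of $V$ lying between $W^-(\lambda_i)$ and $W^+(\lambda_i)$, which is what lets one translate $\mu$-semistability of the subquotient into the minimizer inequality for $f_{\lambda_i}$, together with keeping the additivity bookkeeping consistent.
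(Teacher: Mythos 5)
Your proof is correct and follows essentially the same route as the paper's: both parts use additivity of the dimension vector along short exact sequences (which the paper phrases as modularity of $\sigma,\tau$), together with the fact that $W^\pm(\lambda_i)$ minimize $f_{\lambda_i}$, and the paper's "$W' + W^-(\lambda_i)$" is exactly your preimage $U$ under the lattice correspondence. The only cosmetic difference is that you explicitly invoke the uniqueness part of the HN theorem at the end, which the paper leaves implicit.
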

\begin{proof}
    Since $W^+(\lambda_i)$ and $W^-(\lambda_i)$ are both minimizers of $f_{\lambda_i}$, we have $f_{\lambda_i}(W^+(\lambda_i)) = f_{\lambda_i}(W^-(\lambda_i))$.
    Hence
    \begin{align}
        0 &= \lambda_i (\tau(W^+(\lambda_i)) - \tau(W^-(\lambda_i))) - \sigma(W^+(\lambda_i)) - \sigma(W^-(\lambda_i)) \\
        &= \lambda_i \tau(W^+(\lambda_i) / W^-(\lambda_i)) - \sigma(W^+(\lambda_i) / W^-(\lambda_i)).
    \end{align}
    Thus, $\mu(W^+(\lambda_i) / W^-(\lambda_i)) = \lambda_i$.

    For the second item, let $W' \leq W^+(\lambda_i) / W^-(\lambda_i)$ be a nonzero subrepresentation of $W^+(\lambda_i) / W^-(\lambda_i)$.
    Then, $W' + W^-(\lambda_i)$ is a subrepresentation of $V$.
    Since $W^- (\lambda_i)$ is a minimizer of $f_{\lambda_i}$, we have
    \begin{align}
        \lambda_i \tau(W' + W^-(\lambda_i)) - \sigma(W' + W^-(\lambda_i)) \geq \lambda_i \tau(W^- (\lambda_i)) - \sigma(W^- (\lambda_i)).
    \end{align}
    By the modularity of $\sigma$ and $\tau$, we have
    $
        \lambda_i \tau(W') - \sigma(W') \geq 0,
    $
    which implies $\mu(W') \leq \lambda_i = \mu(W^+(\lambda_i) / W^-(\lambda_i))$.
    Thus, $W^+(\lambda_i) / W^-(\lambda_i)$ is $\mu$-semistable.
\end{proof}

\subsection{Algorithm for Harder-Narasimhan filtration}
The above observation yields an efficient algorithm for finding the HN-filtration.
First, we reduce the $\mu$-semistablity checking to $\sigma$-semistability checking.

\begin{lemma}\label{lem:slope-to-weight}
    Let $\mu = \sigma/\tau$ be a slope and $V$ a quiver representation with $\mu(V) = p/q$, where $p$ is an integer and $q$ is a positive integer.
    Then, $V$ is $\mu$-semistable if and only if $V$ is $\sigma'$-semistable for $\sigma' = q\sigma - p \tau$.
\end{lemma}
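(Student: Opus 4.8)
The plan is to reduce the claim directly to King's criterion, as recalled in the excerpt: $V$ is $\sigma'$-semistable if and only if $\sigma'(\dimv V)=0$ and $\sigma'(\dimv W)\le 0$ for every subrepresentation $W$ of $V$. So the whole argument is a change of variables: rewrite both of these conditions in terms of the maps $W\mapsto\sigma(\dimv W)$ and $W\mapsto\tau(\dimv W)$, and match them against the definition of $\mu$-semistability, namely $\mu(W)\le\mu(V)$ for every \emph{nonzero} subrepresentation $W$. (Note first that $\sigma'=q\sigma-p\tau\in\Z^{Q_0}$ is indeed an integer weight, since $\sigma,\tau$ are integral and $q>0$, so the statement makes sense.)

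First I would dispose of the equality condition. The hypothesis $\mu(V)=p/q$ says exactly $q\,\sigma(\dimv V)=p\,\tau(\dimv V)$, hence $\sigma'(\dimv V)=q\,\sigma(\dimv V)-p\,\tau(\dimv V)=0$ holds automatically; this part of King's criterion imposes no constraint. Next I would handle the inequality condition. For $W=\{0\}$ it reads $0\le 0$ and is vacuous. For a nonzero subrepresentation $W$, strict monotonicity of $\tau$ gives $\tau(\dimv W)>0$, so dividing $\sigma'(\dimv W)=q\,\sigma(\dimv W)-p\,\tau(\dimv W)\le 0$ by $q\,\tau(\dimv W)>0$ yields the equivalent inequality $\sigma(\dimv W)/\tau(\dimv W)\le p/q$, i.e.\ $\mu(W)\le\mu(V)$; and the same computation run backwards turns $\mu(W)\le\mu(V)$ into $\sigma'(\dimv W)\le 0$. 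Quantifying over all subrepresentations, King's criterion for $\sigma'$ is therefore equivalent to $\mu$-semistability of $V$.

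I do not anticipate a genuine obstacle; the lemma is essentially King's criterion viewed through a linear substitution. The only points needing a little care are: (i) $W=V$ must be included among the subrepresentations in King's criterion, which is harmless since $\sigma'(\dimv V)=0\le 0$; (ii) the division step is legitimate precisely because $\tau$ is strictly monotone, so $\tau(\dimv W)>0$ for nonzero $W$ — this is exactly where that hypothesis is used, and it is also why the zero representation must be treated separately; and (iii) the proof uses King's criterion, whose elementary proof is deferred to \Cref{sec:King}, so the lemma formally rests on that later section.
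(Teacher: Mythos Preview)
Your proposal is correct and follows essentially the same route as the paper: both arguments rewrite the slope inequality $\mu(W)\le\mu(V)$ as $(q\sigma-p\tau)(\dimv W)\le 0$ and recognize this, together with $\sigma'(\dimv V)=0$, as King's criterion for $\sigma'$. You are simply a bit more explicit than the paper about the edge cases (the zero subrepresentation, and where strict monotonicity of $\tau$ is used to justify the division).
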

\begin{proof}
    By definition, $V$ is $\mu$-semistable if and only if
    \begin{align}
        \mu(V)\tau(W) - \sigma(W) \geq \mu(V)\tau(V) - \sigma(V) = 0
    \end{align}
    for any subrepresentation $W \leq V$.
    This is equivalent to 
    \begin{align}
        (q\sigma - p\tau)(W)  \leq (q\sigma - p\tau)(V) = 0,
    \end{align}
    which is King's criterion for the $\sigma'$-semistability of $V$.
\end{proof}

Let 
\begin{align}
    S \coloneqq \{ p / q : p \in [-\sigma^-(\alpha), \sigma^+(\alpha)],\ q \in [\tau(\alpha)] \}
\end{align}
be the set of possible slope values.
Note that $\abs{S} = O(|\sigma||\tau|)$.
For each slope $\lambda = p/q$, one can compute the maximum minimizer of $f_\lambda$ by \Cref{thm:King-maximizer} for $\sigma'$-semistability.
Thus, we obtain the HN-filtration by $O(|\sigma||\tau|)$ computations of the maximizers of King's criterion.
A pseudocode is given in \Cref{alg:HN}.

\begin{algorithm}
    \caption{Algorithm for finding the HN-filtration.\label{alg:HN}}
\begin{algorithmic}[1]
    \State Set $i \gets 0$, $W_0 \gets \{0\}$.
    \For{each possible slope $\lambda = p/q \in S$ in the decreasing order}
    \State Invoke the algorithm of \Cref{thm:King-maximizer} with weight $\sigma' = q\sigma - p \tau$ to find the maximum minimizer $W$ of $f_\lambda$.
    \If{$W_i < W$} 
    \State $W_{i+1} \gets W$ and $i \gets i + 1$.
    \EndIf
    \EndFor
    \State \Return $(W_0, \dots, W_i)$.
\end{algorithmic}
\end{algorithm}

\begin{theorem}\label{thm:HN}
    Let $V$ be a quiver representation of an acyclic quiver $Q$ with the dimension vector $\alpha$ with Gaussian integer entries.
    Let $\mu = \sigma/\tau$ be a slope.
    Then, \Cref{alg:HN} finds the HN-filtration in $\poly(|Q|, |\alpha|, |\sigma|, |\tau|, b)$ time, where $b$ is the bit complexity of $V$.
\end{theorem}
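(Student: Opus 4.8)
The plan is to verify correctness first and then bound the running time. Recall from \Cref{sec:HN} that the HN-filtration of $V$ is precisely the chain
\[
    \{0\} = W_0 < W_1 < \dots < W_k = V
\]
induced by the critical values $\lambda_1 > \dots > \lambda_k$, where $W_j = W^+(\lambda_j) = W^-(\lambda_{j+1})$ (with $W_0 = W^-(\lambda_1) = \{0\}$ and $W_k = W^+(\lambda_k) = V$) and $\lambda_j = \mu(W_j/W_{j-1}) = \sigma(\dimv(W_j/W_{j-1}))/\tau(\dimv(W_j/W_{j-1}))$. Since $\dimv(W_j/W_{j-1}) = \dimv W_j - \dimv W_{j-1}$ satisfies $\zeros \le \dimv(W_j/W_{j-1}) \le \alpha$, the numerator of this fraction is an integer in $[-\sigma^-(\alpha), \sigma^+(\alpha)]$ and its denominator a positive integer at most $\tau(\alpha)$, so every critical value lies in the candidate set $S$. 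Thus it suffices to show: (i) for each $\lambda = p/q \in S$, the subroutine call in \Cref{alg:HN} returns $W^+(\lambda)$, the inclusion-wise maximum minimizer of $f_\lambda$; and (ii) as $\lambda$ runs over $S$ in decreasing order, $\lambda \mapsto W^+(\lambda)$ is a non-decreasing step function whose sequence of distinct values is exactly $W_0, W_1, \dots, W_k$, with the jumps occurring at the critical values.

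For (i): minimizing $f_\lambda(W) = \lambda\tau(\dimv W) - \sigma(\dimv W)$ over subrepresentations is, after multiplication by $q > 0$, the same as maximizing $\sigma'(\dimv W)$ for the integer weight $\sigma' = q\sigma - p\tau$ (indeed $q f_\lambda(W) = -\sigma'(\dimv W)$); hence the maximum minimizer of $f_\lambda$ equals the inclusion-wise maximum maximizer of King's criterion for $\sigma'$, which is exactly the output of the algorithm behind \Cref{thm:King-maximizer} on input $\sigma'$. One technical caveat: \Cref{thm:King-maximizer} is stated for weights with $\sigma'(\alpha) = 0$, whereas $\sigma'(\alpha) = q\sigma(\alpha) - p\tau(\alpha)$ need not vanish here; but this balancing condition plays no role in the shrunk-subspace reduction underlying \Cref{lem:shrunk-King} and \Cref{sec:ss}, and can in any case be restored by adjoining to $Q$ an isolated vertex carrying a weight that cancels $\sigma'(\alpha)$, which leaves the maximizers on the original quiver unchanged.

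For (ii): by the monotonicity established above, $W^+(\lambda) \le W^-(\lambda')$ whenever $\lambda > \lambda'$. For $\lambda \in (\lambda_{j+1}, \lambda_j)$ the chain $W^+(\lambda_j) \le W^-(\lambda) \le W^+(\lambda) \le W^-(\lambda_{j+1}) = W^+(\lambda_j)$ forces $W^+(\lambda) = W_j$; similarly $W^+(\lambda) = \{0\} = W_0$ for $\lambda > \lambda_1$ and $W^+(\lambda) = V = W_k$ for $\lambda < \lambda_k$. So $W^+(\cdot)$ is constant, equal to $W_j$, on the interval $(\lambda_{j+1}, \lambda_j]$ (with $\lambda_{k+1} \coloneqq -\infty$) and equals $W_0$ above $\lambda_1$; scanning $S$ downward, the recorded value therefore jumps from $W_{j-1}$ to $W_j$ exactly at $\lambda = \lambda_j \in S$. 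An easy induction on the loop of \Cref{alg:HN} shows that after processing a value $\lambda$ the current top of the maintained chain equals $W^+(\lambda)$, so the test $W_i < W$ succeeds precisely at the $k$ critical values and nowhere else, and the algorithm returns $(W_0, \dots, W_k)$, the HN-filtration. For the running time, $\abs{S} = O\bigl((\sigma^+(\alpha) + \sigma^-(\alpha))\,\tau(\alpha)\bigr) = \poly(|Q|, |\alpha|, |\sigma|, |\tau|)$, each $\sigma' = q\sigma - p\tau$ has entries of absolute value $\poly(|Q|, |\alpha|, |\sigma|, |\tau|)$, so each invocation of \Cref{thm:King-maximizer} costs $\poly(|Q|, |\alpha|, |\sigma|, |\tau|, b)$ time and yields a subrepresentation whose basis has polynomial bit complexity; summing over $S$ and accounting for the polynomial-cost containment tests and final output gives the claimed bound.

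I expect the crux to be part (ii): justifying that probing only the finite set $S$ suffices, rather than a continuum of values of $\lambda$. This rests on the step-function behaviour of $\lambda \mapsto W^+(\lambda)$ — a standard principal-partition phenomenon, here supplied by the monotonicity lemma — together with the observation that each critical value is a ratio $\sigma(\dimv(\cdot))/\tau(\dimv(\cdot))$ and hence an explicit element of $S$. The remaining issues — the small extension of \Cref{thm:King-maximizer} past the $\sigma'(\alpha) = 0$ hypothesis, and keeping the intermediate weights $\sigma'$ and subrepresentation bases polynomially bounded — are routine.
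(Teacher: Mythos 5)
Your proof follows essentially the same route as the paper: the monotonicity lemma from \Cref{sec:HN} gives the step-function behaviour of $\lambda \mapsto W^+(\lambda)$, the identification of critical values as subquotient slopes places them in $S$, and the reduction from $f_\lambda$-minimization to King-maximization for $\sigma' = q\sigma - p\tau$ is the content of \Cref{lem:slope-to-weight}. The loop-invariant argument for \Cref{alg:HN} is the same one the paper leaves implicit.

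One point worth highlighting: your caveat about the $\sigma'(\alpha) = 0$ hypothesis of \Cref{thm:King-maximizer} is a genuine subtlety that the paper glosses over. \Cref{lem:slope-to-weight} is stated only for $p/q = \mu(V)$, in which case $\sigma'(\alpha) = 0$, but \Cref{alg:HN} invokes \Cref{thm:King-maximizer} for \emph{every} $\lambda = p/q \in S$, and for $\lambda \neq \mu(V)$ one has $\sigma'(\alpha) \neq 0$, so the CP map $\Phi_V^{\sigma'}$ is not square and \Cref{thm:shrunk} does not apply directly. Your first suggested workaround (``the balancing condition plays no role in the shrunk-subspace reduction'') is a bit too quick: \Cref{lem:shrunk-King} indeed does not need $\sigma'(\alpha) = 0$, but the black-box algorithm of \Cref{thm:shrunk} is stated only for square CP maps. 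Your second workaround --- adjoining an isolated vertex of dimension $\abs{\sigma'(\alpha)}$ with weight $\mp 1$ so that the total weight balances, which leaves the set of maximizers on the original quiver unchanged --- is the correct fix and is cheap. Apart from this, your argument is sound and complete; nice work catching the gap.
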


\subsection{Relation to coarse DM-decomposition}\label{subsec:coarse-DM}
The DM-decomposition is the decomposition of a bipartite graph into smaller graphs that represent the structure of all maximum matchings and minimum vertex covers.
It is well known that the DM-decomposition corresponds to a maximal chain of minimizers of the surplus function.
Also, this is the finest decomposition of generic matrices under permutation of rows and columns.
See \cite{Murota2009} for the details.

Generalizing the DM-decomposition, \citet{Hirai2024} introduced the \emph{coarse DM-decomposition} for linear matrices.
Let $A = \sum_k x_k A_k$ be a linear matrix and $\caA$ the corresponding matrix space.
Without loss of generality, we can assume that $A_k$ have no common element in their kernels;
otherwise, we can simply delete the zero columns of $A$ corresponding to the common kernel space.

Consider a polytope in $\R^2$ defined by 
\begin{align}
    \conv\{(\dim X, \dim Y) : \text{$(X, Y)$ independent subspace of $A$} \}.
\end{align}
Then, the extreme points of the polytope other than the origin correspond to a flag in the lattice of the independent subspace:
\begin{align}
    \C^n  &= X_0 > X_1 > \dots > X_\ell = \{0\}, \\
    \{0\} &= Y_0 < Y_1 < \dots < Y_\ell = \C^n.
\end{align}
By elementary transformation, one can assume that both $X_i$ and $Y_i$ are coordinate subspaces of the columns and rows, respectively.
The coarse DM-decomposition is given by the decomposition of the rows and columns of $A$ into 
$
    (X_{i-1} \setminus X_i, Y_i \setminus Y_{i-1})
$
for $i = 1, \dots, \ell$.
 
Here, we show that the coarse DM-decomposition is a HN-filtration for the generalized Kronecker quiver.
Let us regard $A$ as a representation $V$ of the generalized Kronecker quiver.
Consider weights $\sigma = (1,0)$ and $\tau = (0,1)$. 
The HN-filtration is given by the minimizers of a parameterized modular function
\begin{align}
    f_\lambda(W) = \lambda \dim W(2) - \dim W(1),
\end{align}
where $\lambda \in \R$ and $W \leq V$.
If $\lambda < 0$,
then the unique minimizer of $f_\lambda$ is $(\C^n, \C^n)$.
Suppose that $\lambda \geq 0$.
Since $\caA W(1) \leq W(2)$ for any subrepresentation $W$, we can assume that $W(2) = \caA W(1)$ for any minimizer of $f_\lambda$.
By the assumption that $A$ has no common kernel element, $W(2) \neq \{0\}$ if $W(1) \neq \{0\}$.
So $\tau$ is strictly monotone for minimizers of $f_\lambda$ (see \Cref{rem:strict-monotone}).
Therefore, it suffices to consider a parameterized submodular function
\begin{align}
    f_\lambda(U) = \lambda \dim \caA U - \dim U,
\end{align}
where $\lambda \geq 0$ and $U \leq \C^n$ is a subspace.
Let $\lambda_1 > \cdots > \lambda_k \geq 0$ be the critical values.
As before, denote by $U_i^-$ and $U_i^+$ the minimum and maximum minimizers of $f_{\lambda_i}$, respectively.
They form a flag in the column space, as follows:
\begin{align}
    \{0\} = U_1^- < U_1^+ = U_2^- < U_2^+ = \cdots = U_k^- < U_k^+ = \C^n.
\end{align}
Rename the flag as $\C^n = Y_0 > Y_1 > \dots > Y_k = \{0\}$ to match the notation of the coarse DM-decomposition.
Let $X_i = \caA(Y_i)^\perp$ ($i = 0, 1, \dots, k$) be a flag in the row space,
where $Z^\perp$ denotes the orthogonal complement of a vector space $Z$.
Then, $(X_i, Y_i)$ is an independent subspace of $A$ by construction.
In fact, the above flags are exactly the flags used in the coarse DM-decomposition.
To see this, let us consider an independent subspace $(X, Y)$ that corresponds to an extreme point of the polytope.
As $(\dim X, \dim Y)$ is extreme, there exists a slope $(\lambda, 1)$ that exposes it.
That is, 
\begin{align}
    \lambda \dim X + \dim Y \geq \lambda \dim X' + \dim Y'
\end{align}
for any independent $(X', Y')$.
Substituting $X = \caA(Y)^\perp$, 
\begin{align}
    \lambda (n - \dim \caA(Y)) + \dim Y \geq \lambda (n - \dim \caA(Y')) + \dim Y',
\end{align}
which shows that $Y$ minimizes $f_\lambda$ and vice versa.

Therefore, \Cref{alg:HN} finds the coarse DM-decomposition of $A$.
Since $\abs{\tau} = \abs{\sigma} = 1$, \Cref{alg:HN} runs in polynomial time.

\begin{remark}
    \cite{Hirai2024} defined the coarse DM-decomposition for any field.
    Since the underlying quiver is the generalized Kronecker quiver, finding the maximizers of King's criterion is exactly finding the minimum shrunk subspaces of an approximate matrix space.
    Therefore, instead of modified operator Sinkhorn iteration, one can use an algebraic algorithm of \cite{Ivanyos2018} which can work in any field.
\end{remark}

\section{Rank-one representation}\label{sec:rank-one}
In this section, we particularly focus on a rank-one representation $V$ of an acyclic quiver $Q = (Q_0, Q_1)$, i.e.,
for each arc $a \in Q_1$, the corresponding linear map $V(a)$ is rank-one.
For a weight $\sigma \in \Z^{Q_0}$,
we present a simple combinatorial characterization of the $\sigma$-semistability of $V$ in terms of linear matroids
and show that
the $\sigma$-semistability of $V$ is equivalent to
the feasibility of a certain instance of the \emph{submodular flow problem} arising from $(V, \sigma)$
(with easily verifiable additional conditions).
In particular, these imply that one can check the $\sigma$-semistability of a rank-one representation in strongly polynomial time.

\subsection{Preliminarlies on matroid and submodular flow}
We first recall the basics of matroids.
A \emph{matroid} (e.g.,~\cite{Oxley2011}) is a pair $\M = (S, \mathcal{B})$ of a finite set $S$ and a nonempty family $\mathcal{B} \subseteq 2^S$ of subsets satisfying the following exchange axiom: for any $B, B' \in \mathcal{B}$ and $x \in B \setminus B'$,
there exists $x' \in B' \setminus B$ such that $B \setminus \{x\} \cup \{x'\} \in \mathcal{B}$.
The family $\mathcal{B}$ is referred to as the \emph{base family} of $\M$
and each member in $\mathcal{B}$ a \emph{base}.
For a matroid $\M = (S, \mathcal{B})$,
its \emph{rank function} $r : 2^S \to \Z$ is defined by $r(X) \coloneqq \max \{ B \cap X : B \in \mathcal{B} \}$.
It is well-known that the rank function is submodular, i.e.,
$r(X) + r(Y) \geq r(X \cap Y) + r(X \cup Y)$ for $X, Y \subseteq S$.

\begin{example}[linear matroid]\label{ex:linear-matroid}
    One of the most fundamental examples of matroids is a \emph{linear matroid}, which arises from a matrix, or equivalently, a finite multiset of vectors, as follows.
Let $U$ be a vector space and $S \subseteq U$ a finite multiset of vectors.
Then, the pair $(S, \mathcal{B})$ in which
\begin{align}
    \mathcal{B} \coloneqq \{ X \subseteq S : |X| = \dim \spnn{X} = \dim \spnn{S} \}
\end{align}
forms a matroid; we refer to this as a \emph{linear matroid generated by $S$}.
The rank function $r$ of a linear matroid is simply the map $X \mapsto \dim \spnn{X}$.
\end{example}

We then prepare notation
before introducing the submodular flow problem.
For a submodular function $f : 2^S \to \R \cup \{+\infty\}$ with $f(\emptyset) = 0$ and $f(S) < +\infty$,
its \emph{base polyhedron} $\BB(f)$ is a polyhedron defined by
\begin{align}
    \BB(f) \coloneqq \{ x \in \R^S : x(S) = f(S),\  x(X) \leq f(X) \ (\forall X \subseteq S) \}.
\end{align}
The following are fundamental to the theory of submodular functions:
\begin{lemma}[{\cite[Chapter~4]{Murota2003}}]\label{lem:submodular-polyhedron}
    Let $f : 2^{S_1} \to \R \cup \{+\infty\}$ and $g : 2^{S_2} \to \R \cup \{+\infty\}$ be submodular functions satisfying that $f(\emptyset) = g(\emptyset) = 0$ and $f(S_1)$ and $g(S_2)$ are finite.
    \begin{enumerate}[{label={\textup{(\arabic*)}}}]
        \item If $f$ is integer-valued,
        then $\BB(f)$ is an integral polyhedron.
        \item If $S_1 = S_2$, then $\BB(f + g) = \BB(f) + \BB(g)$, where ``$+$'' in the right-hand side denotes the Minkowski sum.
        If $S_1$ and $S_2$ are disjoint, then $\BB(f + g) = \BB(f) \times \BB(g)$.
    \end{enumerate}
\end{lemma}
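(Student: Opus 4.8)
The plan is to derive both parts from the greedy-algorithm description of base polyhedra (Edmonds' polymatroid theory; see~\cite[Chapter~4]{Murota2003}), used as the single workhorse. For the main argument I assume $f$ and $g$ are finite-valued, handling $+\infty$ afterwards. Fix a submodular $h : 2^S \to \R$ with $h(\emptyset) = 0$. Given a linear order $s_1 \prec \cdots \prec s_n$ of $S$, write $S_j \coloneqq \{s_1, \dots, s_j\}$ (with $S_0 \coloneqq \emptyset$) and define the \emph{greedy vector} $x^{\prec} \in \R^S$ by $x^{\prec}(s_j) \coloneqq h(S_j) - h(S_{j-1})$. Two classical facts drive everything. (i)~$x^{\prec} \in \BB(h)$: the equality $x^{\prec}(S) = h(S)$ telescopes, and $x^{\prec}(X) \le h(X)$ is proved by induction on $\abs{X}$, peeling off the $\prec$-largest element of $X$ and using that the marginal gains of $h$ are nonincreasing. (ii)~If $w \in \R^S$ and $\prec$ is chosen so that $w(s_1) \ge \cdots \ge w(s_n)$, then $x^{\prec}$ maximizes $w^{\top} x$ over $\BB(h)$: by summation by parts $w^{\top} x = \sum_{j < n} \bigl(w(s_j) - w(s_{j+1})\bigr) x(S_j) + w(s_n) x(S)$, all coefficients $w(s_j) - w(s_{j+1})$ are nonnegative, the last term is constant on $\BB(h)$, and $x^{\prec}$ makes every constraint $x(S_j) \le h(S_j)$ tight. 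In particular $\BB(h) \ne \emptyset$, and $\BB(h)$ is bounded, since a recession direction $d$ satisfies $d(S) = 0$ and $d(X) \le 0$ for all $X$, hence also $-d(X) = d(S \setminus X) \le 0$, so $d = 0$; thus $\BB(h)$ is a polytope.

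Part~(1) is then immediate: if $f$ is integer-valued, every greedy vector is integral, and each vertex of $\BB(f)$ --- being the unique maximizer of some $w \in \R^S$ --- coincides with the corresponding integral greedy vector by~(ii), so $\BB(f)$ is an integral polytope.

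For part~(2), the disjoint case $S_1 \cap S_2 = \emptyset$ is elementary: $\BB(f) \times \BB(g) \subseteq \BB(f+g)$ by inspection, and conversely if $z \in \BB(f+g)$ then $z|_{S_1}(S_1) \le f(S_1)$ and $z|_{S_2}(S_2) \le g(S_2)$, while $z|_{S_1}(S_1) + z|_{S_2}(S_2) = z(S_1 \cup S_2) = f(S_1) + g(S_2)$ forces equality in both, after which $z|_{S_1}(X) = z(X) \le (f+g)(X) = f(X)$ for $X \subseteq S_1$, and symmetrically on $S_2$. In the common-ground case $S_1 = S_2 = S$, the inclusion $\BB(f) + \BB(g) \subseteq \BB(f+g)$ is a one-line verification of the defining (in)equalities. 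For the reverse inclusion I would compare support functions. Since $f + g$ is submodular and its greedy increments split, $x^{\prec}_{f+g} = x^{\prec}_{f} + x^{\prec}_{g}$ for every order $\prec$; hence, picking $\prec$ to sort a given $w$ nonincreasingly and invoking~(ii),
\begin{align*}
    \max_{z \in \BB(f+g)} w^{\top} z
    &= w^{\top} x^{\prec}_{f+g}
     = w^{\top} x^{\prec}_{f} + w^{\top} x^{\prec}_{g} \\
    &= \max_{x \in \BB(f)} w^{\top} x + \max_{y \in \BB(g)} w^{\top} y
     = \max_{v \in \BB(f) + \BB(g)} w^{\top} v .
\end{align*}
So the polytope $\BB(f+g)$ and the Minkowski sum $\BB(f) + \BB(g)$ (again a polytope) have the same support function; as the latter is contained in the former, a point of $\BB(f+g)$ outside $\BB(f) + \BB(g)$ could be strictly separated from it, contradicting equality of support functions, and the two sets coincide.

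The only genuinely substantive ingredient is fact~(ii) --- optimality of the greedy vector, where submodularity is really used --- which is the classical theorem of Edmonds and which I would cite rather than reprove; fact~(i) and all the inclusions above are routine. The one remaining point is the extension from finite-valued $f, g$ to $\R \cup \{+\infty\}$-valued ones: constraints $x(X) \le f(X)$ with $f(X) = +\infty$ are vacuous, so $\BB(f)$ is unaffected if $f$ is replaced by its reduction to the effective domain $\{X : f(X) < +\infty\}$, after which one runs the greedy construction over linear orders compatible with that reduction; I would defer this standard bookkeeping to~\cite[Chapter~4]{Murota2003}, with $\BB(f)$ now allowed to be an unbounded --- but still integral, respectively additively decomposing --- polyhedron. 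For the applications in this section one needs precisely integrality from~(1), to obtain integer-valued submodular flows, and Minkowski-additivity from~(2), to split the King-cone inequality into a per-vertex sum.
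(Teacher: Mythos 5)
The paper cites this lemma to Murota's book without giving its own proof, so there is no in-paper argument to compare against. Your proof is correct and follows the standard greedy-algorithm (Edmonds) derivation of base-polyhedron theory that the cited reference uses: greedy vectors are the vertices (giving integrality in part~(1)), greedy increments are additive in the submodular function (giving support-function equality and hence Minkowski additivity in part~(2)), and the disjoint-ground-set case is the elementary forced-equality argument. One small remark: in the support-function step you implicitly use that the Minkowski sum of two polyhedra is again a polyhedron (hence closed) so that strict separation applies; for the finite-valued case this is automatic since both summands are compact, and you correctly flag that the $+\infty$ extension needs the reduction to the effective domain, which is where the unbounded case lives. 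Nothing is missing.
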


\begin{example}[base polytope of a matroid rank function]\label{ex:base-polyhedron}
Let $r$ be the rank function of a matroid $\M = (S, \mathcal{B})$
and $(-r)^\#$ the \emph{dual}~\cite[Section~2.3]{Fujishige2005} of $-r$, i.e.,
\begin{align}
    (-r)^\#(X) \coloneqq r(S \setminus X) - r(S);
\end{align}
both $r$ and $(-r)^\#$ are submodular.
Then, the base polytopes $\BB(r)$ and $\BB((-r)^\#)$ are represented as
\begin{align}
    \BB(r) = \conv \left\{ \chi_{B} : B \in \mathcal{B} \right\}, \qquad \BB((-r)^\#) = \conv \left\{ -\chi_{B} : B \in \mathcal{B} \right\},
\end{align}
where $\chi_B \in \{0,1\}^S$ denotes the characteristic vector of $B \subseteq S$,
i.e.,
$\chi_B(s) = 1$ if $s \in S$ and $\chi_B(s) = 0$ if $s \in S \setminus B$.

In addition, let $k \in \Z_+$ be a nonnegative integer.
Then, by \Cref{lem:submodular-polyhedron}~(2), we have
\begin{align}
    \BB(kr) &= \conv \left\{ \sum_{\ell = 1}^k \chi_{B_\ell} : B_1, \dots, B_k \in \mathcal{B} \right\},\\
    \BB((-kr)^\#) &= \conv \left\{ - \sum_{\ell = 1}^k \chi_{B_\ell} : B_1, \dots, B_k \in \mathcal{B} \right\}.
\end{align}
Moreover, by \Cref{lem:submodular-polyhedron}~(1), the set of integral points in $\BB(kr)$ (resp.\ $\BB((-kr)^\#)$) consists of $\sum_{i = 1}^k \chi_{B_i}$ (resp.\ $- \sum_{i = 1}^k \chi_{B_i}$) for $B_1, \dots, B_k \in \mathcal{B}$, that is,
\begin{align}
    \BB(kr) \cap \Z^S &= \left\{ \sum_{\ell = 1}^k \chi_{B_\ell} : B_1, \dots, B_k \in \mathcal{B} \right\},\\
    \BB((-kr)^\#) \cap \Z^S &= \left\{ - \sum_{\ell = 1}^k \chi_{B_\ell} : B_1, \dots, B_k \in \mathcal{B}\right\}. \qedhere
\end{align}
\end{example}

In (the feasibility version of) the \emph{submodular flow problem}~\cite{Edmonds1977} (see also~\cite[Section~5.1]{Fujishige2005}),
given a directed graph $D = (S, A)$, an upper capacity function $\overline{c} : A \to \R \cup \{+\infty\}$, a lower capacity function $\underline{c} : A \to \R \cup \{-\infty\}$, and a submodular function $f : 2^S \to \R \cup \{+\infty\}$ with $f(\emptyset) = f(S) = 0$,
we are asked to find a \emph{feasible submodular flow}, namely,
a flow (vector) $\varphi \in \R^A$ such that
$\underline{c}(a) \leq \varphi(a) \leq \overline{c}(a)$ for each $a \in A$
and its boundary $\partial \varphi$ belongs to the base polyhedron $\BB(f)$,
if it exists.
We say that an instance $(D, \overline{c}, \underline{c}, f)$ is \emph{feasible} if there is a feasible submodular flow for $(D, \overline{c}, \underline{c}, f)$.

The following is well-known in combinatorial optimization.
\begin{theorem}[{\cite{Frank1984}; see also~\cite[Section~5.3]{Fujishige2005}}]\label{thm:chara:submodular-flow}
Let $D = (S, A)$ be a directed graph, $\overline{c} : A \to \R \cup \{+\infty\}$ an upper capacity, $\underline{c} : A \to \R \cup \{-\infty\}$ a lower capacity, and $f : 2^S \to \R \cup \{+\infty\}$ a submodular function with $f(\emptyset) = f(S) = 0$.
Then,
$(D, \overline{c}, \underline{c}, f)$ is feasible if and only if
\begin{align}
\overline{c}(\Out(X)) - \underline{c}(\In(X)) + f(V \setminus X) \geq 0
\end{align}
holds for all $X \subseteq S$.
In particular, if $f$ is integer-valued,
then there exists an integral feasible submodular flow $\varphi \in \Z^A$ whenever it is feasible.
\end{theorem}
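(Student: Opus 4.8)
I would handle the ``only if'' direction first, as it is just a short computation. Given a feasible submodular flow $\varphi$ and a set $X \subseteq S$, the arcs internal to $X$ cancel in $\sum_{i \in X} \partial\varphi(i)$, so $\partial\varphi(X) = \varphi(\Out(X)) - \varphi(\In(X)) \le \overline{c}(\Out(X)) - \underline{c}(\In(X))$ by the capacity bounds; on the other hand $\partial\varphi \in \BB(f)$ together with $f(S) = 0$ gives $\partial\varphi(X) = -\partial\varphi(S \setminus X) \ge -f(S \setminus X)$. Chaining these two inequalities is exactly $\overline{c}(\Out(X)) - \underline{c}(\In(X)) + f(S \setminus X) \ge 0$.

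For the converse I would pass to generalized polymatroids. The key point is that the set of realizable boundaries $T \coloneqq \{\, \partial\varphi : \underline{c} \le \varphi \le \overline{c} \,\}$ --- the image of the box $[\underline{c},\overline{c}]$ under the incidence matrix of $D$ (which is totally unimodular) --- is a generalized polymatroid; a Hoffman-type feasibility argument makes this explicit as $T = \{\, x \in \R^{S} : p(X) \le x(X) \le b(X)\ (\forall X \subseteq S) \,\}$ with $b(X) \coloneqq \overline{c}(\Out(X)) - \underline{c}(\In(X))$ and $p(X) \coloneqq -\,b(S \setminus X)$, the pair $(p,b)$ being paramodular. The claim to be proved then reads: the generalized polymatroid $T$ intersects the base polyhedron $\BB(f)$. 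By the classical intersection theorem for a generalized polymatroid and a base polyhedron, this happens exactly when $b(X) + f(S \setminus X) \ge 0$ for all $X \subseteq S$ (the companion inequality is the same condition with $X$ replaced by $S \setminus X$), which is precisely the hypothesis. For integrality, when the capacities and $f$ are integer-valued the functions $p, b, f$ are integer-valued, so $T$ and $\BB(f)$ are integral polyhedra (for $\BB(f)$ this is the first part of \Cref{lem:submodular-polyhedron}), and the intersection of two integral generalized polymatroids is again integral; hence a feasible instance has an integral feasible submodular flow.

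The main obstacle is of course this ``if'' direction. The generalized-polymatroid route sketched above offloads the real work onto two classical structural facts --- that the image of a box under a totally unimodular incidence matrix is a generalized polymatroid, and the generalized-polymatroid intersection theorem --- so in the write-up I would take that route and cite \cite{Frank1984,Fujishige2005} for the details. If a self-contained proof were wanted instead, I would run Frank's augmenting-path algorithm: keep $\varphi$ inside the box and, at each step, search an auxiliary digraph built from the residual capacities of $\varphi$ and the exchange capacities of the currently $f$-tight sets for a path along which pushing flow strictly decreases the infeasibility; if no such path exists, read off a single $X \subseteq S$ violating $\overline{c}(\Out(X)) - \underline{c}(\In(X)) + f(S \setminus X) \ge 0$. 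The delicate step there --- and the one I expect to cost the most effort --- is the uncrossing argument showing that the family of sets obstructing augmentation is closed under union and intersection (using submodularity of $f$), so that a minimal obstruction is the required single violating set.
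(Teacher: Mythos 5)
The paper does not prove \Cref{thm:chara:submodular-flow}: it is stated as a classical result and cited directly to \cite{Frank1984} and \cite[Section~5.3]{Fujishige2005}, with no proof supplied. So there is no ``paper's own proof'' to compare against; you are being asked to supply a proof where the authors chose to cite. With that caveat, your sketch is the standard one and is essentially correct. The necessity computation is right, and for sufficiency the route through generalized polymatroids is exactly Frank's: the boundary polyhedron $T = \{\partial\varphi : \underline{c}\le\varphi\le\overline{c}\}$ is the g-polymatroid $Q(p,b)$ with $b(X)=\overline{c}(\Out(X))-\underline{c}(\In(X))$ and $p(X)=-b(S\setminus X)$ (paramodularity of $(p,b)$ reduces to a direct calculation on cut coefficients), and feasibility is exactly $T\cap\BB(f)\neq\emptyset$, which the g-polymatroid intersection theorem characterizes by your inequality; your observation that the ``companion'' inequality is the same one for the complementary set is correct.

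One small gap worth flagging in the integrality step: showing that $T\cap\BB(f)$ is an integral polyhedron gives you an integral boundary vector $x\in T\cap\BB(f)\cap\Z^S$, but the statement asks for an integral \emph{flow} $\varphi\in\Z^A$. You need one more invocation of total unimodularity (or Hoffman's circulation theorem): since the incidence matrix is TU and $\underline{c},\overline{c},x$ are integral, the nonempty polyhedron $\{\varphi : \underline{c}\le\varphi\le\overline{c},\ \partial\varphi = x\}$ has an integral point. You gesture at ``a Hoffman-type feasibility argument'' when describing $T$, but you should apply it explicitly at this final step as well; as written, the integrality conclusion does not quite follow from integrality of $T\cap\BB(f)$ alone.
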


\subsection{Reduction from semistability to submodular flow}
Let $Q = (Q_0, Q_1)$ be an acyclic quiver
and $V$ a rank-one representation of $Q$.
Then for each arc $a \in Q_1$,
the corresponding linear map $V(a)$ is representable as $v_a f_a$ for some nonzero vector $v_a \in V(ha)$ and nonzero dual vector $f_a \in V(ta)^*$,
i.e.,
$V(a)$ is the map $V(ta) \ni u \mapsto v_af_a(u) \in V(ha)$.
Building on this representation,
we consider that
the data of $V$ consist of
\begin{itemize}
    \item a vector space $V(i)$ for each vertex $i \in Q_0$,
    \item a nonzero vector $v_a \in V(i)$ for each vertex $i \in Q_0$ and incoming arc $a \in \In(i)$, and
    \item a nonzero dual vector $f_a \in V(i)^*$ for each vertex $i \in Q_0$ and outgoing arc $a \in \Out(i)$.
\end{itemize}
Let $S_i^+ \coloneqq \{ f_a : a \in \Out(i) \}$ and $S_i^- \coloneqq \{ v_a : a \in \In(i) \}$ for $i \in Q_0$.
Note that $S_i^+$ and $S_i^-$ may be multisets;
even if $f_a$ and $f_b$ are the same vectors for distinct $a,b \in \Out(i)$,
we distinguish $f_a$ from $f_b$ in $S_i^+$.

We can easily observe the following from the definition of a subrepresentation.
\begin{lemma}\label{lem:subrep}
    A tuple $(W(i))_{i \in Q_0}$ of vector subspaces $W(i) \leq V(i)$ induces a subrepresentation $W$ of $V$
    if and only if for any arc $a \in Q_1$ with
    $W(ta) \not\leq \ker f_a$, we have $\spnn{v_a} \leq W(ha)$.
\end{lemma}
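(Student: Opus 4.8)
The plan is to unwind the definition of a subrepresentation and use the rank-one factorization $V(a) = v_a f_a$. Recall that a subrepresentation $W$ of $V$ is required to satisfy $W(i) \leq V(i)$ for all $i \in Q_0$, $W(a) = V(a)|_{W(ta)}$, and $\im W(a) \leq W(ha)$ for all $a \in Q_1$. The first condition is exactly the hypothesis on the tuple $(W(i))_{i \in Q_0}$, and the second merely \emph{defines} the linear maps $W(a)$ once the $W(i)$ are fixed, so it imposes no constraint. Hence the only real content is the third condition, namely $V(a)\,W(ta) \leq W(ha)$ for every arc $a$, and the whole proof reduces to analyzing this inclusion arc by arc.

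The key step is to compute the image $V(a)\,W(ta)$ explicitly. Since $V(a)$ is the map $u \mapsto v_a f_a(u)$, we have $V(a)\,W(ta) = \{\, f_a(u)\, v_a : u \in W(ta) \,\}$. If $W(ta) \leq \ker f_a$, this set is $\{0\}$; otherwise $f_a$ takes a nonzero value on $W(ta)$, and since it is a linear functional its image on $W(ta)$ is all of $\C$, so $V(a)\,W(ta) = \spnn{v_a}$. Therefore $V(a)\,W(ta) \leq W(ha)$ holds automatically when $W(ta) \leq \ker f_a$, and is equivalent to $\spnn{v_a} \leq W(ha)$ otherwise. Taking the conjunction over all $a \in Q_1$ gives precisely the stated condition in its contrapositive phrasing: for each arc $a$ with $W(ta) \not\leq \ker f_a$ one must have $\spnn{v_a} \leq W(ha)$. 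This establishes both directions simultaneously.

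I do not expect a genuine obstacle here; the statement is a direct translation of the subrepresentation axioms under the rank-one factorization, and the only point requiring a moment's care is the observation that a nonzero linear functional restricted to a subspace on which it does not vanish is surjective onto $\C$, so that the image $V(a)\,W(ta)$ is exactly $\spnn{v_a}$ rather than some proper subset. I would state that observation as a one-line remark and then conclude.
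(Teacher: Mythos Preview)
Your proof is correct and is exactly the direct verification the paper has in mind; the paper itself does not spell out a proof but merely says this ``can easily [be] observe[d] from the definition of subrepresentations.'' Your unpacking of $V(a)W(ta)$ via the rank-one factorization and the case split on whether $W(ta)\leq\ker f_a$ is the natural (and only) way to see it.
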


The above observation (\Cref{lem:subrep}) gives rise to the directed graph $D[V] = (S, A)$ defined by
\begin{align}
    &S \coloneqq \bigcup_{i \in Q_0} \left(S_i^+ \cup S_i^-\right),\\
    &A \coloneqq \left\{ (f_a, v_a) : a \in Q_1 \right\} \cup \left\{ (v_a, f_b) : i \in Q_0,\ v_a \in S_i^-, \ f_b \in S_i^+,\ v_a \notin \ker f_b \right\}.
\end{align}
Intuitively, an arc in $A$ of the form $(f_a, v_a)$ represents the original arc $a \in Q_1$ of the quiver, whereas that of the form $(v_a, f_b)$ represents ``if $W(ha) = W(tb)$ contains $v_a$ then $W(tb) \not\leq \ker f_b$; hence, $W(hb)$ must contain $v_b$ for $W$ to be a subrepresentation of $V$.''
See \Cref{fig:D[V]}.
\begin{figure}
    \centering
    \includegraphics[width=0.8\textwidth]{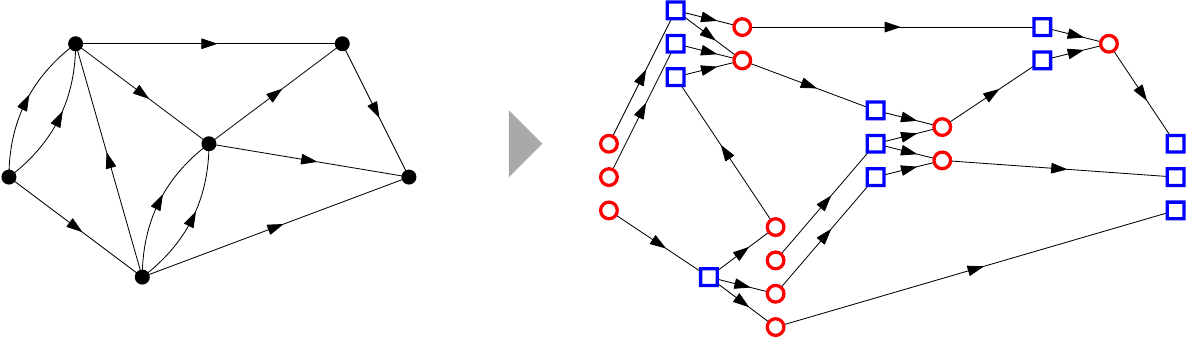}
    \caption{The left is an original quiver $Q$ and the right is the directed graph $D[V]$ constructed from $Q$ and a rank-one representation $V$ of $Q$. The red circles and blue squares in the right graph represent the vertices of $D[V]$ corresponding to the outgoing arcs (endowed with nonzero dual vector) and incoming arcs (endowed with nonzero vector) in $Q$, respectively.\label{fig:D[V]}}
\end{figure}

This digraph $D[V]$ enables us to characterize the $\sigma$-semistability of $V$ in a combinatorial manner.
For each $i \in Q_0$,
let $\M_i^+ = (S_i^+, \mathcal{B}_i^+)$ (resp.\ $\M_i^- = (S_i^-, \mathcal{B}_i^-)$) denote the linear matroid generated by $S_i^+$ (resp.\ $S_i^-$).
Moreover, let $r_i^+$ (resp.\ $r_i^-$) denote the rank function of $\M_i^+$ (resp.\ $\M_i^-$).
Then, we obtain the following.
\begin{theorem}\label{thm:King:rank-one}
Let $V$ be a rank-one representation of $Q$ and $\sigma \in \Z^{Q_0}$ a weight.
Then $V$ is $\sigma$-semistable if and only if
\begin{enumerate}[{label={\upshape{(K\arabic*)}}}]
    \item $\sum_{i \in Q_0} \sigma^+(i) \dim V(i) = \sum_{i \in Q_0} \sigma^-(i) \dim V(i) \eqqcolon \Sigma$ and
    \item for any lower set $X \subseteq S$ of $D[V]$, we have
    \begin{align}
    \sum_{i \in Q_0}\left( \sigma^+(i) r_i^+(S_i^+ \setminus X) + \sigma^-(i) r_i^-(S_i^- \cap X) \right) \geq \Sigma.
\end{align}
\end{enumerate}
\end{theorem}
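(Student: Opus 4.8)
The plan is to read off the claim from King's criterion by translating, in both directions, between subrepresentations $W$ of $V$ and lower sets $X$ of the digraph $D[V]$. King's criterion says $V$ is $\sigma$-semistable iff $\sigma(\dimv V)=0$ and $\sigma(\dimv W)\le 0$ for every subrepresentation $W$. Writing $\sigma=\sigma^+-\sigma^-$, the equality $\sigma(\dimv V)=0$ is literally condition (K1), so I would assume (K1) from now on and abbreviate the common value by $\Sigma$. It then remains to show that, under (K1), the family of inequalities $\sigma(\dimv W)\le 0$ over all subrepresentations $W$ is equivalent to (K2). The dictionary between the two sides is the encoding already implicit in \Cref{lem:subrep}: to a subrepresentation $W$ associate $X=X(W)\subseteq S$ with $f_a\in X$ iff $W(ta)\not\le\ker f_a$ and $v_a\in X$ iff $\spnn{v_a}\le W(ha)$.

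For the implication ``(K1) and (K2) $\Rightarrow$ semistable'', I would take an arbitrary subrepresentation $W$ and form $X=X(W)$. That $X$ is a lower set of $D[V]$ is exactly the content of \Cref{lem:subrep}: an arc $(f_a,v_a)$ leaving $X$ would say $W(ta)\not\le\ker f_a$ yet $v_a\notin W(ha)$, and an arc $(v_a,f_b)$ leaving $X$ (so $ha=tb$) would say $v_a\in W(ha)=W(tb)\le\ker f_b$ while the presence of that arc demands $v_a\notin\ker f_b$; both are impossible. For this $X$ one has $S_i^+\setminus X=\{f_a\in S_i^+: W(i)\le\ker f_a\}$ and $S_i^-\cap X=\{v_a\in S_i^-: v_a\in W(i)\}$, hence $r_i^+(S_i^+\setminus X)\le\dim V(i)-\dim W(i)$ (the span of functionals vanishing on $W(i)$ sits in the annihilator of $W(i)$) and $r_i^-(S_i^-\cap X)\le\dim W(i)$. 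Substituting these into (K2) collapses it to $\Sigma\le\Sigma-\sigma(\dimv W)$, i.e.\ $\sigma(\dimv W)\le 0$; with (K1) this is King's criterion.

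For the converse ``semistable $\Rightarrow$ (K2)'', I would start from an arbitrary lower set $X$ and build a subrepresentation $W$ tailored so that the two matroid ranks in (K2) are computed exactly by $W$. Setting $Q_0^0\coloneqq Q_0\setminus(Q_0^+\cup Q_0^-)$, I would put $W(i)\coloneqq\bigcap_{f_a\in S_i^+\setminus X}\ker f_a$ for $i\in Q_0^+\cup Q_0^0$ and $W(i)\coloneqq\spnn{S_i^-\cap X}$ for $i\in Q_0^-$. The single combinatorial fact I would extract from ``$X$ is a lower set'' is the inclusion $\spnn{S_i^-\cap X}\le\bigcap_{f_a\in S_i^+\setminus X}\ker f_a$ at every vertex $i$ (if $v_a\in X$ and $f_b\notin X$, the arc $(v_a,f_b)$ is absent, so $f_b(v_a)=0$); this makes the two clauses consistent at $Q_0^-$, it shows $W(ta)\le\bigcap_{f_b\in S_{ta}^+\setminus X}\ker f_b$ at every vertex, and from there \Cref{lem:subrep} gives that $W$ is a subrepresentation (if $W(ta)\not\le\ker f_a$ then $f_a\in X$, hence $v_a\in X$ by the lower-set property, hence $v_a\in W(ha)$). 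Finally, $\dim\bigcap_{f_a\in S_i^+\setminus X}\ker f_a=\dim V(i)-r_i^+(S_i^+\setminus X)$ for $i\in Q_0^+$ and $\dim\spnn{S_i^-\cap X}=r_i^-(S_i^-\cap X)$ for $i\in Q_0^-$; summing and using $\sigma=\sigma^+-\sigma^-$ yields the exact identity
\[
    \sum_{i\in Q_0}\left(\sigma^+(i)\,r_i^+(S_i^+\setminus X)+\sigma^-(i)\,r_i^-(S_i^-\cap X)\right)=\Sigma-\sigma(\dimv W),
\]
and King's criterion ($\sigma(\dimv W)\le 0$) then gives (K2).

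The part I expect to be the main obstacle is exactly this last construction. One has to make $W(i)$ as \emph{large} as possible (the intersection of kernels) where $\sigma(i)\ge 0$ but as \emph{small} as possible (the span of the chosen vectors) where $\sigma(i)<0$, so that $W$ simultaneously witnesses both $r_i^+$ and $r_i^-$ with equality; a uniform choice witnesses only one of them and produces the inequality in the useless direction, after which King's criterion cannot be invoked. Checking that this deliberately asymmetric assignment is still closed under the subrepresentation condition of \Cref{lem:subrep} --- where acyclicity plays no role but the lower-set property of $X$ carries the argument --- is the step that needs care; everything else is bookkeeping with ranks, spans, and annihilators.
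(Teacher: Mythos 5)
Your proposal matches the paper's proof essentially line for line: both directions use the same constructions. To go from $\sigma$-semistability to (K2) you build, from a lower set $X$, the asymmetric subrepresentation $W(i)=\bigcap\{\ker f_b : f_b\in S_i^+\setminus X\}$ on $Q_0^+\cup Q_0^0$ and $W(i)=\spnn{S_i^-\cap X}$ on $Q_0^-$, verify it is a subrepresentation via the lower-set property of $X$ (exactly the paper's argument, just phrased with \Cref{lem:subrep} directly rather than its contrapositive), and read off the exact identity $\sum_i(\sigma^+(i)r_i^+(S_i^+\setminus X)+\sigma^-(i)r_i^-(S_i^-\cap X))=\Sigma-\sigma(\dimv W)$; and for the converse you build the same $X(W)$, establish it is a lower set from \Cref{lem:subrep} and the definition of the arcs $(v_a,f_b)$, and bound the two ranks by $\dim V(i)-\dim W(i)$ and $\dim W(i)$ respectively. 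Your closing remark correctly identifies the only subtle point — the deliberate asymmetry of $W(i)$ across the sign of $\sigma(i)$, which is needed to turn the inequality into an equality in the second direction — and this is precisely what the paper's construction does as well.
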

\begin{proof}
By King's criterion,
$V$ is $\sigma$-semistable if and only if
$\sigma (\dimv V) = 0$ and $\sigma (\dimv W) \leq 0$ for any subrepresentation $W$ of $V$.
The former condition $\sigma (\dimv V) = 0$ of King's criterion is equivalent to (K1).
Hence, it suffices to see the equivalence between
the latter condition of King's criterion and (K2)
under the condition (K1).

Suppose that the latter condition of King's criterion holds, i.e., $\sigma (\dimv W) \leq 0$ for any subrepresentation $W$ of $V$.
Take any lower set $X \subseteq S$ of $D[V]$.
We define from $X$, the tuple $(W(i))_{i \in Q_0}$ of vector subspaces $W(i) \leq V(i)$ by
\begin{align}
    W(i) \coloneqq
    \begin{cases}
        \bigcap \{ \ker f_b : f_b \in S_i^+ \setminus X \} & \text{if $\sigma(i) \geq 0$},\\
        \spnn{v_a : v_a \in S_i^- \cap X} & \text{if $\sigma(i) < 0$}.
    \end{cases}
\end{align}

We prove that $(W(i))_{i \in Q_0}$ induces a subrepresentation of $V$.
By \Cref{lem:subrep},
it suffices to see that for any arc $a \in Q_1$ with $\spnn{v_a} \not\leq W(ha)$, or equivalently, $v_a \notin W(ha)$,
we have $W(ta) \leq \ker f_a$.
Since $X$ is a lower set,
there is no arc between $v_a \in S_i^- \cap X$ and $f_b \in S_i^+ \setminus X$,
i.e., every $v_a \in S_i^- \cap X$ is in $\ker f_b$.
Hence, for any $i \in Q_0$, we have $\spnn{v_a : v_a \in S_i^- \cap X} \leq \bigcap \{ \ker f_b : f_b \in S_i^+ \setminus X \}$,
which implies $\spnn{v_a : v_a \in S_i^- \cap X} \leq W(i)$.
Take any arc $a \in Q_1$ with $v_a \notin W(ha)$.
Then, $v_a \in S_{ha}^- \setminus X$ by the above argument,
and hence $f_a \in S_{ta}^+ \setminus X$ as $X$ is a lower set.
Therefore, $W(ta) \leq \ker f_a$ holds.

Since $(W(i))_{i \in Q_0}$ induces a subrepresentation of $V$,
we have $\sigma(\dimv W) \leq 0$,
i.e.,
\begin{align}\label{eq:King}
    \sum_{i \in Q_0} \left( \sigma^+(i) \dim \bigcap \{ \ker f_b : f_b \in S_i^+ \setminus X \} - \sigma^-(i) \dim \spnn{v_a : v_a \in S_i^- \cap X} \right) \leq 0.
\end{align}
Since
$\dim \bigcap \{ \ker f_b : f_b \in S_i^+ \setminus X \} = \dim V(i)^* - \dim \spnn{f_b : f_b \in S_i^+ \setminus X} = \dim V(i) - r_i^+(S_i^+ \setminus X)$
and $\dim \spnn{v_a : v_a \in S_i^- \cap X} = r_i^-(S_i^- \cap X)$ (see \Cref{ex:linear-matroid} for the rank function of a linear matroid),
the inequality~\eqref{eq:King} is equivalent to
\begin{align}\label{eq:K2}
    \sum_{i \in Q_0}\left( \sigma^+(i) r_i^+(S_i^+ \setminus X) + \sigma^-(i) r_i^-(S_i^- \cap X) \right) \geq \Sigma,
\end{align}
where we recall that $\Sigma = \sum_{i \in Q_0} \sigma^+(i) \dim V(i)$ by (K1).

Conversely, suppose that (K2) holds.
Take any subrepresentation $W$ of $V$.
Our aim is to prove $\sigma(\dimv W) \leq 0$.

We define a vertex subset $X \subseteq S$ of $D[V]$ by
\begin{align}
    X \coloneqq \bigcup_{i \in Q_0} \left(\{ v_a \in S_i^- : v_a \in W(i) \} \cup \{ f_b \in S_i^+ : W(i) \not\leq \ker f_b \} \right).
\end{align}
Then, $X$ forms a lower set of $D[V]$.
Indeed, if $f_b \in S_{tb}^+ \cap X$, or equivalently, $W(tb) \not\leq \ker f_b$,
then $W(hb)$ contains $v_b$ since $W$ is a subrepresentation of $V$, implying $v_b \in S_{hb}^- \cap X$.
If $v_a \in S_{ha}^- \cap X$, that is, $v_a \in W(ha)$, then $W(ha) \not\leq \ker f_b$ for $(v_a, f_b) \in A$ by the definition of arc $(v_a, f_b)$; thus we obtain $f_b \in S_{tb}^+ \cap X (= S_{ha}^+ \cap X)$ for any $(v_a, f_b) \in A$.

By reversing the argument from~\eqref{eq:King} to~\eqref{eq:K2} above,
we obtain the inequality~\eqref{eq:King} for this $X$.
Since $\spnn{v_a : v_a \in S_i^- \cap X} \leq W(i) \leq \bigcap \{ \ker f_b : f_b \in S_i^+ \setminus X \}$ and $\sigma^+(i), \sigma^-(i) \geq 0$ for $i \in Q_0$,
the LHS of~\eqref{eq:King} is at least $\sigma(\dimv W)$.
Therefore, we conclude that $\sigma(\dimv W) \leq 0$.
\end{proof}

\Cref{thm:King:rank-one} implies the following necessary conditions for $\sigma$-semistability.
\begin{corollary}\label{cor:necessary}
    Let $V$ be a rank-one representation of $Q$ and $\sigma \in \Z^{Q_0}$ a weight.
    If $V$ is $\sigma$-semistable,
    then {\rm (K1)} and the following full-dimensional condition {\rm (F)} hold:
    \begin{enumerate}[{label={\upshape{(F)}}}]
        \item $r_i^+(S_i^+) = \dim V(i)$ for $i \in Q_0^+$ and $r_i^-(S_i^-) = \dim V(i)$ for $i \in Q_0^-$.
    \end{enumerate}
\end{corollary}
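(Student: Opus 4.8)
The plan is to read off both (K1) and (F) from \Cref{thm:King:rank-one}, by testing the combinatorial inequality (K2) on the two ``extreme'' lower sets of $D[V]$, namely $X=\emptyset$ and $X=S$, and combining the outcome with the trivial upper bounds $r_i^+(S_i^+)\le\dim V(i)$ and $r_i^-(S_i^-)\le\dim V(i)$. Condition (K1) itself needs no argument: it is one of the two conditions in \Cref{thm:King:rank-one} equivalent to $\sigma$-semistability (equivalently, it is the equality $\sigma(\dimv V)=0$ of King's criterion). Throughout, write $\Sigma\coloneqq\sum_{i\in Q_0}\sigma^+(i)\dim V(i)$, which by (K1) also equals $\sum_{i\in Q_0}\sigma^-(i)\dim V(i)$.

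For the statement about $Q_0^+$, I would apply (K2) with $X=\emptyset$; this is a lower set since it has no outgoing arcs. Then (K2) becomes $\sum_{i\in Q_0}\sigma^+(i)\,r_i^+(S_i^+)\ge\Sigma$. On the other hand $S_i^+$ is a multiset of vectors in $V(i)^*$, which has dimension $\dim V(i)$, so $r_i^+(S_i^+)\le\dim V(i)$ and hence $\sum_{i\in Q_0}\sigma^+(i)\,r_i^+(S_i^+)\le\Sigma$. Thus equality holds throughout, forcing $\sigma^+(i)\,r_i^+(S_i^+)=\sigma^+(i)\dim V(i)$ for every $i\in Q_0$; since $\sigma^+(i)>0$ precisely when $i\in Q_0^+$, we get $r_i^+(S_i^+)=\dim V(i)$ for all $i\in Q_0^+$.

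For the statement about $Q_0^-$, I would instead apply (K2) with $X=S$, which is again a lower set. Since $S_i^+\setminus S=\emptyset$ and $S_i^-\cap S=S_i^-$, (K2) reduces to $\sum_{i\in Q_0}\sigma^-(i)\,r_i^-(S_i^-)\ge\Sigma$, and combining this with $\Sigma=\sum_{i\in Q_0}\sigma^-(i)\dim V(i)$ and $r_i^-(S_i^-)\le\dim V(i)$ the same squeezing argument yields $r_i^-(S_i^-)=\dim V(i)$ for all $i\in Q_0^-$.

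I do not anticipate a genuine obstacle here: the only points requiring a check are that $\emptyset$ and $S$ have empty out-boundary in $D[V]$ (immediate from the definition of $D[V]$) and the two rank bounds (clear, as $S_i^+\subseteq V(i)^*$ and $S_i^-\subseteq V(i)$). If one prefers not to invoke \Cref{thm:King:rank-one}, an equally short route is to apply King's criterion directly: for the $Q_0^+$ part use the subrepresentation defined by $W(i)=\bigcap_{f_b\in S_i^+}\ker f_b$ and $W(j)=\{0\}$ for $j\ne i$, and for the $Q_0^-$ part use $W(i)=\spnn{v_a:v_a\in S_i^-}$ together with $W(j)=V(j)$ for $j\ne i$; both are subrepresentations by \Cref{lem:subrep}, and King's inequality $\sigma(\dimv W)\le 0$ on each of them collapses, after cancellation with $\sigma(\dimv V)=0$, to exactly the desired equality.
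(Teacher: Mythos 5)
Your main argument is exactly the paper's proof: test (K2) on $X=\emptyset$ and $X=S$, then squeeze against the trivial bounds $r_i^+(S_i^+)\le\dim V(i)$ and $r_i^-(S_i^-)\le\dim V(i)$. The alternative route you sketch at the end, applying King's criterion directly to the two explicit subrepresentations, is also correct but is essentially the same squeezing argument unwound through the proof of \Cref{thm:King:rank-one}, so it is not a genuinely different approach.
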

\begin{proof}
    We clearly have $r_i^+(S_i^+) \leq \dim V(i)^* = \dim V(i)$ and $r_i^-(S_i^-) \leq \dim V(i)$ for each $i \in Q_0$.
    Hence we obtain $\sum_{i \in Q_0} \sigma^+(i) r_i^+(S_i^+) \leq \Sigma$ and $\sum_{i \in Q_0} \sigma^-(i) r_i^-(S_i^-) \leq \Sigma$.

    Suppose that $V$ is $\sigma$-semistable.
    Then, by \Cref{thm:King:rank-one},
    we have (K1) and $\sum_{i \in Q_0} \sigma^+(i) r_i^+(S_i^+) \geq \Sigma$ (corresponding to the lower set $X = \emptyset$ in (K2))
    and $\sum_{i \in Q_0} \sigma^-(i) r_i^-(S_i^-) \geq \Sigma$ (corresponding to the lower set $X = S$ in (K2)).
    Thus we obtain $\sum_{i \in Q_0} \sigma^+(i) r_i^+(S_i^+) = \sum_{i \in Q_0} \sigma^-(i) r_i^-(S_i^-) = \Sigma$,
    which implies $r_i^+(S_i^+) = \dim V(i)$ for $i \in Q_0^+$ and $r_i^-(S_i^-) = \dim V(i)$ for $i \in Q_0^-$.
\end{proof}

In the following, we see that the combinatorial characterization of the $\sigma$-semistability given in \Cref{thm:King:rank-one} can be rephrased as the feasibility of a certain instance of the submodular flow problem on $D[V]$.
Here, we assume that the conditions (K1)
and (F) hold (recall \Cref{cor:necessary}).
Set $\overline{c}$ and $\underline{c}$ as $\overline{c}(a) \coloneqq +\infty$ and $\underline{c}(a) \coloneqq 0$ for $a \in A$,
denoted as $\overline{\infty}$ and $\underline{0}$, respectively.
We define a function $f_V : 2^S \to \R$ by
\begin{align}
    f_V(X) \coloneqq& \sum_{i \in Q_0} \left(\sigma^+(i) r_i^+(S_i^+ \cap X) +  (-\sigma^-(i) r_i^-)^\# (S_i^- \cap X) \right)\\
    {}=& \sum_{i \in Q_0} \left(\sigma^+(i) r_i^+(S_i^+ \cap X) +  \left(\sigma^-(i) r_i^-(S_i^- \setminus X) - \sigma^-(i)r_i^-(S_i^-)\right) \right)\\
    {}=& \sum_{i \in Q_0} \left(\sigma^+(i) r_i^+(S_i^+ \cap X) +  \sigma^-(i) r_i^-(S_i^- \setminus X) \right) - \Sigma \label{eq:f_V}
\end{align}
for $X \subseteq S$,
where the second equality follows from the definition of the dual $(-\sigma^-(i) r_i^-)^\#$ (see \Cref{ex:base-polyhedron})
and the last follows from $\sum_{i \in Q_0} \sigma^-(i) r_i^-(S_i^-) = \Sigma$ by (K1) and (F).
The assumptions (K1) and (F) also imply $f(\emptyset) = f(S) = 0$.
Since the rank functions $r_i^+$ and $r_i^-$ are submodular and $\sigma^+(i)$ and $\sigma^-(i)$ are nonnegative,
the function $f_V$ is also submodular.
Thus $(D[V], \overline{\infty}, \underline{0}, f_V)$ forms an instance of the submodular flow problem.

The following theorem says that the $\sigma$-semistability can be characterized as the feasibility of $(D[V], \overline{\infty}, \underline{0}, f_V)$.
Here, for a finite set $T$, subset $T' \subseteq T$, and $\gamma \in \R^T$,
let $\gamma|_{T'}$ denote the projection of $\gamma$ to $\R^{T'}$.
\begin{theorem}\label{thm:submodular-flow}
Let $V$ be a rank-one representation of $Q$ and $\sigma \in \Z^{Q_0}$ a weight.
Then, $V$ is $\sigma$-semistable if and only if it satisfies {\rm (K1)}, {\rm (F)}, and
\begin{enumerate}[{label={\upshape{(S)}}}]
    \item the instance $(D[V], \overline{\infty}, \underline{0}, f_V)$ of the submodular flow problem is feasible.
\end{enumerate}
In addition, the last condition {\rm (S)} can be replaced by the following more combinatorial condition {\rm (S)'}:
\begin{enumerate}[{label={\upshape{(S)'}}}]
    \item there is a nonnegative integral flow $\varphi \in \Z_+^A$ such that, for each $i \in Q_0$, we have
    $\partial \varphi|_{S_i^+} = \sum_{\ell = 1}^{\sigma^+(i)} \chi_{B_\ell}$ for some $B_1, \dots, B_{\sigma^+(i)} \in \mathcal{B}_i^+$ and $\partial \varphi|_{S_i^-} = - \sum_{\ell = 1}^{\sigma^-(i)} \chi_{B_\ell}$ for some $B_1, \dots, B_{\sigma^-(i)} \in \mathcal{B}_i^-$.
\end{enumerate}
\end{theorem}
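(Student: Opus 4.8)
My plan is to layer Frank's feasibility criterion (\Cref{thm:chara:submodular-flow}) on top of the combinatorial reformulation of King's criterion in \Cref{thm:King:rank-one}, so that the argument reduces to unwinding the definition of $f_V$. I would first record that (K1) and (F) are precisely what makes $(D[V],\overline{\infty},\underline{0},f_V)$ a legitimate submodular-flow instance (they force $f_V(\emptyset)=f_V(S)=0$), that $\sigma$-semistability implies (K1) and (F) by \Cref{cor:necessary}, and that (K1) together with (K2) also implies (F), by instantiating (K2) at the lower sets $X=\emptyset$ and $X=S$ exactly as in the proof of \Cref{cor:necessary}. By \Cref{thm:King:rank-one} it then suffices to show that, under (K1) and (F), the inequality family (K2) is equivalent to (S), and that (S) is equivalent to (S)$'$.

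\textbf{Step 1: (K2) $\Leftrightarrow$ (S).} I would apply \Cref{thm:chara:submodular-flow} to $(D[V],\overline{\infty},\underline{0},f_V)$. For any $X\subseteq S$ one has $\overline{\infty}(\Out(X))=+\infty$ unless $\Out(X)=\emptyset$, i.e.\ unless $X$ is a lower set of $D[V]$, while $\underline{0}(\In(X))=0$ always; since $f_V$ is finite-valued, Frank's inequality is vacuous for every non-lower-set $X$, so the instance is feasible iff $f_V(S\setminus X)\geq 0$ for every lower set $X$. Substituting $Z=S\setminus X$ into \eqref{eq:f_V} and using $S_i^+\cap(S\setminus X)=S_i^+\setminus X$ and $S_i^-\setminus(S\setminus X)=S_i^-\cap X$ yields
\begin{align}
 f_V(S\setminus X)=\sum_{i\in Q_0}\bigl(\sigma^+(i)\,r_i^+(S_i^+\setminus X)+\sigma^-(i)\,r_i^-(S_i^-\cap X)\bigr)-\Sigma,
\end{align}
so $f_V(S\setminus X)\geq 0$ is exactly the inequality appearing in (K2). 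This gives (S) $\Leftrightarrow$ (K2), which combined with \Cref{thm:King:rank-one} proves the main equivalence of the theorem.

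\textbf{Step 2: (S) $\Leftrightarrow$ (S)$'$.} Here I would exploit the product structure of $\BB(f_V)$: since the ground sets $S_i^+, S_i^-$ ($i\in Q_0$) are pairwise disjoint and $f_V=\sum_{i\in Q_0}\bigl(\sigma^+(i)r_i^+ + (-\sigma^-(i)r_i^-)^\#\bigr)$, \Cref{lem:submodular-polyhedron}~(2) gives $\BB(f_V)=\prod_{i\in Q_0}\bigl(\BB(\sigma^+(i)r_i^+)\times\BB((-\sigma^-(i)r_i^-)^\#)\bigr)$, and \Cref{ex:base-polyhedron} identifies the integral points of the two factors as the sums $\sum_{\ell=1}^{\sigma^+(i)}\chi_{B_\ell}$ and the vectors $-\sum_{\ell=1}^{\sigma^-(i)}\chi_{B_\ell}$ over bases. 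For (S)$'\Rightarrow$(S): a flow $\varphi\in\Z_+^A$ with the prescribed boundary has $\underline{0}\leq\varphi\leq\overline{\infty}$ and, by the above, $\partial\varphi\in\BB(f_V)$, so it is a feasible submodular flow. For (S)$\Rightarrow$(S)$'$: $f_V$ is integer-valued (each $\sigma^\pm(i)\in\Z_+$ and matroid rank functions are integer-valued), so the integrality part of \Cref{thm:chara:submodular-flow} furnishes an integral feasible submodular flow $\varphi\in\Z^A$, necessarily nonnegative since $\underline{c}=\underline{0}$, and $\partial\varphi\in\BB(f_V)\cap\Z^S$ then has exactly the block form demanded in (S)$'$ by the same product decomposition and \Cref{ex:base-polyhedron}.

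\textbf{Expected difficulty.} I do not anticipate a deep obstacle; the proof is mostly careful bookkeeping. The step that needs the most care is the complementation in Step 1 — tracking how $\cap X$ and $\setminus X$ interchange when $f_V$ is evaluated at $S\setminus X$, so that the resulting inequality is literally the one in (K2) — together with, in Step 2, checking that the sign convention hidden in the dual $(-\sigma^-(i)r_i^-)^\#$ matches the negative characteristic-vector boundary imposed on the $S_i^-$-blocks in (S)$'$, so that \Cref{ex:base-polyhedron} applies without modification.
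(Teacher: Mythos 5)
Your proof is correct and follows essentially the same route as the paper's: apply Frank's feasibility criterion (\Cref{thm:chara:submodular-flow}) to the instance $(D[V],\overline{\infty},\underline{0},f_V)$, observe that the $+\infty$ upper capacities make the inequality vacuous off lower sets so that the criterion collapses to $f_V(S\setminus X)\geq 0$ for lower sets $X$, unwind the complementation to recover exactly (K2), and invoke \Cref{thm:King:rank-one}; then pass to (S)$'$ via the product decomposition of $\BB(f_V)\cap\Z^S$ from \Cref{lem:submodular-polyhedron}~(2), \Cref{ex:base-polyhedron}, and the integrality part of \Cref{thm:chara:submodular-flow}. The only addition relative to the written proof is your explicit note that (K1) together with (K2) re-derives (F) by specializing to $X=\emptyset$ and $X=S$ (the content of \Cref{cor:necessary}), which makes the bookkeeping around the forward implication slightly more transparent but is not a different argument.
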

\begin{proof}
    We first see that, under (K1) and (F),
    the condition (K2) is equivalent to (S), which implies the former assertion.
    By~\Cref{thm:chara:submodular-flow},
    the rank-one representation $V$ satisfies (S)
    if and only if
    \begin{align}
        \overline{\infty}(\Out(X)) - \underline{0}(\In(X)) + f_V(V \setminus X) \geq 0
    \end{align}
    for any $X \subseteq S$,
    which can be rephrased using~\eqref{eq:f_V} as
    \begin{align}
        \sum_{i \in Q_0} \left( \sigma^+(i) r_i^+(S_i^+(i) \setminus X) + \sigma^-(i) r_i^-(S_i^- \cap X)\right) \geq \Sigma
    \end{align}
    for any lower set $X \subseteq S$ of $D[V]$.
    This is equivalent to the condition (K2).

    It follows from \Cref{lem:submodular-polyhedron}~(2), \Cref{ex:base-polyhedron},
    and the fact that $f_V$ is integer-valued that
    \begin{align}
        \BB(f_V) \cap \Z^S = \prod_{i \in Q_0} \left\{ \sum_{\ell = 1}^{\sigma^+(i)} \chi_{B_\ell} : B_1, \dots, B_{\sigma^+(i)} \in \mathcal{B}_i^+ \right\} \times \left\{ -\sum_{\ell = 1}^{\sigma^-(i)} \chi_{B_\ell} : B_1, \dots, B_{\sigma^-(i)} \in \mathcal{B}_i^- \right\}.
    \end{align}
    From the above and \Cref{thm:chara:submodular-flow},
    we obtain the latter assertion.
\end{proof}

The conditions (K1) and (F) are clearly verifiable in strongly polynomial time.
Since we can minimize the submodular function $X \mapsto \sum_{i \in Q_0}\left( \sigma^+(i) r_i^+(S_i^+ \setminus X) + \sigma^-(i) r_i^-(S_i^- \cap X) \right)$ over the ring family $\{ X : \text{$X$ is a lower set of $D[V]$} \}$ in strongly polynomial time (see~\cite{Jiang2021} and the references therein),
the condition (K2) is also verifiable in strongly polynomial time.
Similarly, one can check (S) in strongly polynomial time,
since the function $X \mapsto \overline{c}(\Out(X)) - \underline{c}(\In(X)) + f(V \setminus X)$ in \Cref{thm:chara:submodular-flow} is submodular~\cite[Section~2.3]{Fujishige2005}; see also~\cite{Frank1984}.
Therefore, we obtain the following:
\begin{theorem}\label{thm:rank-one:poly}
    Let $V$ be a rank-one representation of a quiver $Q$ and $\sigma \in \Z^{Q_0}$ a weight.
    Then, one can check if $V$ is $\sigma$-semistable in strongly polynomial time.
\end{theorem}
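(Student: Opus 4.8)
The plan is to reduce everything to the characterization already established in \Cref{thm:submodular-flow}: a rank-one representation $V$ is $\sigma$-semistable if and only if it satisfies the three conditions (K1), (F), and (S). Hence it suffices to show that each of these can be tested in strongly polynomial time from the data $(V,\sigma)$. The first two are elementary. Condition (K1) merely compares the two integers $\sum_{i\in Q_0}\sigma^+(i)\dim V(i)$ and $\sum_{i\in Q_0}\sigma^-(i)\dim V(i)$. Condition (F) asks whether, for each $i\in Q_0^+$, the vectors $\{f_a:a\in\Out(i)\}$ span $V(i)^*$, and, for each $i\in Q_0^-$, the vectors $\{v_a:a\in\In(i)\}$ span $V(i)$; both are ranks of explicit rational matrices, and it is classical that the rank of a rational matrix is computable in strongly polynomial time. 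By \Cref{cor:necessary} these two conditions are necessary for $\sigma$-semistability anyway, so if either fails we answer ``no'' immediately; otherwise we proceed to (S).

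To test (S) we must first build the submodular-flow instance $(D[V],\overline{\infty},\underline{0},f_V)$ in strongly polynomial time. The ground set $S$ is the disjoint union of the $S_i^{\pm}$, and the arc set $A$ is obtained by listing the original arcs $(f_a,v_a)$ together with, for each $i\in Q_0$ and each pair $v_a\in S_i^-$, $f_b\in S_i^+$, whether $f_b(v_a)=0$ --- that is, $O(|Q_1|^2)$ inner products of rational vectors. The function $f_V$ of \eqref{eq:f_V} is evaluated at any $X\subseteq S$ by computing the ranks $r_i^+(S_i^+\cap X)$ and $r_i^-(S_i^-\setminus X)$ for $i\in Q_0$, i.e.\ ranks of explicit rational submatrices; thus $f_V$ comes with a strongly polynomial evaluation oracle, and under (K1) and (F) it is submodular with $f_V(\emptyset)=f_V(S)=0$, so $(D[V],\overline{\infty},\underline{0},f_V)$ is a legitimate instance.

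Finally, by Frank's criterion (\Cref{thm:chara:submodular-flow}) the instance is feasible if and only if $\overline{\infty}(\Out(X))-\underline{0}(\In(X))+f_V(S\setminus X)\ge 0$ for all $X\subseteq S$. Since $\overline{c}\equiv+\infty$, this is automatic whenever $\Out(X)\ne\emptyset$, so (S) is equivalent to $f_V(S\setminus X)\ge 0$ for every lower set $X$ of $D[V]$. Equivalently (unwinding \eqref{eq:f_V}), one minimizes the submodular function $X\mapsto\sum_{i\in Q_0}\bigl(\sigma^+(i)r_i^+(S_i^+\setminus X)+\sigma^-(i)r_i^-(S_i^-\cap X)\bigr)$ over the ring family of lower sets of $D[V]$ and checks that the minimum is at least $\Sigma$. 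Minimizing a submodular function over a ring family specified by a digraph reduces to unconstrained submodular function minimization, which is strongly polynomial \cite{Jiang2021}; alternatively one invokes a strongly polynomial submodular flow algorithm on $(D[V],\overline{\infty},\underline{0},f_V)$ directly, noting that $X\mapsto\overline{c}(\Out(X))-\underline{c}(\In(X))+f_V(S\setminus X)$ is submodular \cite[Section~2.3]{Fujishige2005}. Combining the three tests yields the theorem.

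The proof has no deep obstacle; the only point requiring care --- and it is routine --- is the bookkeeping around strong polynomiality: one must verify that the linear-matroid rank oracle underlying $f_V$ is itself strongly polynomial, so that the running-time guarantees for submodular flow / submodular function minimization apply to the composite oracle, and that the infinite upper capacities $\overline{\infty}$ are handled correctly by restricting Frank's inequality to lower sets, which turns it into a genuine minimization over a distributive lattice rather than over all of $2^S$.
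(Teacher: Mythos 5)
Your proposal is correct and follows essentially the same route as the paper: reduce to the characterization of \Cref{thm:submodular-flow}, check (K1) and (F) by elementary integer comparison and rational-matrix rank computations, and check (S) by observing that Frank's criterion with $\overline{c}\equiv+\infty$ and $\underline{c}\equiv 0$ collapses to minimizing the submodular function $X \mapsto \sum_{i} (\sigma^+(i) r_i^+(S_i^+\setminus X) + \sigma^-(i) r_i^-(S_i^-\cap X))$ over the ring family of lower sets of $D[V]$, which is strongly polynomial. You actually spell out more of the details than the paper does (explicitly unwinding $f_V(S\setminus X)$, noting the $O(|Q_1|^2)$ inner products needed to build $D[V]$, and flagging that the rank oracle itself must be strongly polynomial), but the underlying argument is identical.
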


\subsection{Implications}

\Cref{thm:submodular-flow} states that King's criterion for a rank-one representation serves as a good characterization for the existence of a feasible submodular flow in a certain instance of the submodular flow problem, or equivalently, a flow such that its boundary can be decomposed as a sum of indicators of matroid bases.
Pursuing this direction, we specializes \cref{thm:submodular-flow} to quivers having specific structures:
generalized Kronecker quivers and star quivers, which arise from linear matroid intersection and rank-one BL polytopes, respectively.
In particular, we observe that \cref{thm:submodular-flow} can recover well-known good characterizations on these problems.

\paragraph{Generalized Kronecker quivers and linear matroid intersction.}

Suppose that $Q = (Q_0, Q_1)$ is a generalized Kronecker quiver, i.e.,
$Q_0$ consists of the two vertices $1, 2$ and $Q_1$ consists of $m$ parallel arcs $a_1, \dots, a_m$ from $1$ to $2$
(recall \Cref{fig:Kr-star-quivers}).
Let $V$ be a rank-one representation of $Q$, in which
$V(a_k) \coloneqq v_k f_k$ for some nonzero $v_k \in V(2)$ and $f_k \in V(1)^*$.
Here, $f_k$ is regarded as a row vector.
That is,
$S^+ \coloneqq S_1^+ = \{ f_1, \dots, f_m \}$ and $S^- \coloneqq S_2^- = \{ v_1, \dotsc, v_m \}$ (these may be multisets).
Let $\M^+ \coloneqq \M_1^+$ (resp.\ $\M^- \coloneqq \M_2^-$) be the linear matroid generated by $S^+$ (resp.\ $S^-$), and $r^+ \coloneqq r_1^+$ (resp.\ $r^- \coloneqq r_2^-$) denotes the rank function of $\M^+$ (resp.\ $\M^-$).
We assume that $\spnn{S^+} = V(1)^*$ and $\spnn{S^-} = V(2)$.
In the following, we naturally identify $S^+$ and $S^-$ with $Q_1$ via the correspondences between $f_i, v_i$ and $a_i$,
that is, we consider that both of the ground sets of $\M^+$ and $\M^-$ are $Q_1$.

Set a weight $\sigma$ as $\sigma = (1,-1)$.
Then, King's criterion is that $\dim V(1) = \dim V(2)$ and $\dim U \le \dim(\sum_{k=1}^m V(a_k) U)$ for any subspace $U \le V(1)$ (see \Cref{ex:nc-rank}).
A feasible integral submodular flow in this setting corresponds to bases of $\M^+$ and $\M^-$ indexed by the same arc sets.
Hence, we obtain the following corollary of \cref{thm:submodular-flow}, which agrees with the well-known characterization of the existence of a common base in linear matroid intersection according to Lov\'{a}sz~\citep{Lovasz1989}.

\begin{corollary}[{\citep{Lovasz1989}}]\label{cor:matroid-intersection}
  Assume $\dim V(1) = \dim V(2)$.
  Then, there is a common base $B \subseteq Q_1$ of $\M^+$ and $\M^-$ if and only if $\dim U \le \dim(\sum_{k=1}^m V(k) U)$ for any subspace $U \le V(1)$.
\end{corollary}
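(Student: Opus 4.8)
The plan is to obtain Corollary~\ref{cor:matroid-intersection} as a direct specialization of Theorem~\ref{thm:submodular-flow}. First I would recall, via Example~\ref{ex:nc-rank}, that for the generalized Kronecker quiver with $\sigma = (1,-1)$ King's criterion reads precisely that $\dim V(1) = \dim V(2)$ and $\dim U \le \dim\!\left(\sum_{k=1}^m V(a_k) U\right)$ for every subspace $U \le V(1)$. Hence, under the standing hypothesis $\dim V(1) = \dim V(2)$, the right-hand side of the corollary is literally the assertion that $V$ is $\sigma$-semistable, so it suffices to prove that $V$ is $\sigma$-semistable if and only if $\M^+$ and $\M^-$ admit a common base.

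Next I would feed $(V,\sigma)$ into Theorem~\ref{thm:submodular-flow}. For this quiver we have $Q_0^+ = \{1\}$, $Q_0^- = \{2\}$, $\sigma^+ = (1,0)$, and $\sigma^- = (0,1)$, so condition (K1) is exactly $\dim V(1) = \dim V(2)$ (assumed), and condition (F) is $r^+(S^+) = \dim V(1)$ together with $r^-(S^-) = \dim V(2)$, which are exactly the standing assumptions $\spnn{S^+} = V(1)^*$ and $\spnn{S^-} = V(2)$. Therefore Theorem~\ref{thm:submodular-flow} collapses to: $V$ is $\sigma$-semistable if and only if (S)$'$ holds.

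The remaining step is to unwind (S)$'$ in this case. Since every arc of $Q$ runs from $1$ to $2$, we have $S_1^- = S_2^+ = \emptyset$, so no ``dependency'' arc $(v_a,f_b)$ of $D[V]$ is ever created and the arc set of $D[V]$ is just the perfect matching $A = \{(f_k,v_k) : k \in [m]\}$. A nonnegative integral flow on $D[V]$ is thus a vector $(\varphi_k)_k \in \Z_+^m$, with $\partial\varphi|_{S_1^+} = (\varphi_k)_k$ and $\partial\varphi|_{S_2^-} = -(\varphi_k)_k$; the parts of (S)$'$ referring to $S_1^-$ and $S_2^+$ are vacuous, and since $\sigma^+(1) = \sigma^-(2) = 1$ the surviving parts say that $(\varphi_k)_k = \chi_{B^+}$ for some base $B^+$ of $\M^+$ and $(\varphi_k)_k = \chi_{B^-}$ for some base $B^-$ of $\M^-$. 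Consequently (S)$'$ holds precisely when $B^+ = B^-$, i.e.\ when $\M^+$ and $\M^-$ share a common base (and conversely a common base $B$ yields the flow $\varphi = \chi_B$). Chaining this with the two previous paragraphs yields the corollary.

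I expect the only delicate point to be the bookkeeping in the last paragraph: checking that $D[V]$ degenerates to a matching because the row vectors $f_k$ and the column vectors $v_k$ live at different vertices (so the matroid-dependency arcs vanish), and matching the trivial vertex-balance constraints of (S)$'$ with the requirement that the common support be simultaneously a base of $\M^+$ and of $\M^-$. Everything else is a straightforward substitution into Theorem~\ref{thm:submodular-flow} and the translation of King's criterion recorded in Example~\ref{ex:nc-rank}.
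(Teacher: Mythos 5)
Your proof is correct and follows exactly the route the paper intends: identify the right-hand side with King's criterion for $\sigma=(1,-1)$ via Example~\ref{ex:nc-rank}, then invoke Theorem~\ref{thm:submodular-flow} and unwind condition (S)$'$ to a common base of $\M^+$ and $\M^-$. Your careful bookkeeping — checking that (K1) and (F) are exactly the standing hypotheses, and that $D[V]$ degenerates to the matching $\{(f_k,v_k)\}_{k\in[m]}$ because $S_1^- = S_2^+ = \emptyset$ so no dependency arcs arise — is precisely the content the paper compresses into the sentence ``A feasible integral submodular flow in this setting corresponds to bases of $\M^+$ and $\M^-$ indexed by the same arc sets.''
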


Note that the latter condition in \cref{cor:matroid-intersection} is equivalent to the nc-nonsingularity of a linear symbolic matrix $\sum_{k=1}^m x_k v_k f_k$, as described in \Cref{ex:nc-rank}.

\paragraph{Star quivers and rank-one BL polytopes.}

Suppose that $Q = (Q_0, Q_1)$ is a star quiver, i.e.,
$Q_0 = \{ 0,1, \dots, m \}$ and $Q_1 = \{ (0,1), \dots, (0,m) \}$
(recall \Cref{fig:Kr-star-quivers}).
Let $V$ be a rank-one representation of $Q$, in which
$V((0,i)) \coloneqq v_i f_i$ for some nonzero $v_i \in V(i)$ and $f_i \in V(0)^*$.
That is,
$S_0^+ = \{ f_1, \dots, f_m \}$ (this may be a multiset) and $S_i^- = \{ v_i \}$ for each $i \in [m]$.
Recall that $\M \coloneqq \M_0^+$ is the linear matroid generated by $S_0^+$ and $r \coloneqq r_0^+$ denotes the rank function of $\M$.
We assume $\spnn{S_0^+} = V(0)^*$ and $\dim V(i) = 1$ for $i \in [m]$.

Let $\sigma = (d, -c_1, \dotsc, -c_m) \in \Z^{Q_0}$ be a weight with $d, c_1, \dotsc, c_m > 0$.
Then, as described in \cref{ex:BL polytope}, King's criterion is that $(c_1/d, \dots, c_m/d)$ is in the \emph{Brascamp--Lieb} (BL) \emph{polytope} determined from the rank-one BL-datum $(f_1, \dots, f_m)$, which is the set of points $p \in \R_+^m$ such that
\begin{align}
    \dim U \le \sum_{i=1}^m p_i \dim (f_i U)
\end{align}
for all subspaces $U \le V(0)$.
On the other hand, feasible submodular flows correspond to the base polytope of $\mathbf{M}$.
Therefore, we obtain the following corollary of \cref{thm:submodular-flow}, which recovers the characterization of the rank-one BL polytope by \citet{Barthe1998}.

\begin{corollary}[{\citep{Barthe1998}}]
    The BL polytope associated with the rank-one BL-datum $(f_1, \dots, f_m)$ is the base polytope of $\mathbf{M}$.
\end{corollary}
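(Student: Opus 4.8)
The plan is to deduce the corollary from \Cref{thm:submodular-flow} by computing each of its ingredients in the star-quiver case. Put $n \coloneqq \dim V(0)$ and identify $Q_1$ with $[m]$, hence with the ground set $S_0^+ = \{f_1,\dots,f_m\}$ of $\M = \M_0^+$, via $a_i = (0,i) \leftrightarrow i$. Here $Q_0^+ = \{0\}$ and $Q_0^- \subseteq [m]$, so $S_i^- = \{v_i\}$ for $i \in [m]$ while every other set $S_j^\pm$ is empty; hence $D[V]$ is the disjoint union of the $m$ arcs $(f_i,v_i)$, with no arc of the form $(v_a,f_b)$. For a weight $\sigma = (d,-c_1,\dots,-c_m)$ with $d \in \Z_{>0}$ and $c_i \in \Z_{\ge 0}$, condition (F) of \Cref{thm:submodular-flow} is exactly the standing hypotheses ($r([m]) = \dim V(0) = n$ as $\spnn{S_0^+} = V(0)^*$, and $r_i^-(\{v_i\}) = 1$ as $v_i \ne 0$), and (K1) reads $\sum_i c_i = dn$.

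Next I would make (S) explicit. In $f_V$ the only nonzero contributions are $d\, r(S_0^+ \cap X)$ from the root and, from each leaf $i$, the term $(-c_i r_i^-)^\#(\{v_i\} \cap X)$, which is $0$ if $v_i \notin X$ and $-c_i$ if $v_i \in X$. Since $f_V$ is a sum of submodular functions on the pairwise disjoint sets $S_0^+$ and $\{v_i\}$ ($i \in [m]$), \Cref{lem:submodular-polyhedron}~(2) and \Cref{ex:base-polyhedron} give $\BB(f_V)$ as the product whose factors are the single points $\{-c_i\}$ (leaves) and $\BB(dr) = d\,\BB(r)$ (root). Hence a nonnegative flow $\varphi$ with $\partial\varphi \in \BB(f_V)$ forces $\varphi((f_i,v_i)) = c_i$ for all $i$, and then $\partial\varphi|_{S_0^+} = (c_1,\dots,c_m) \in d\,\BB(r)$; conversely $\varphi \coloneqq (c_i)_i$ works whenever $(c_1,\dots,c_m) \in d\,\BB(r)$. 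So (S) holds iff $p \coloneqq (c_1/d,\dots,c_m/d) \in \BB(r)$, and since $\BB(r) \subseteq \{x : x([m]) = r([m]) = n\}$ this already entails $\sum_i p_i = n$, i.e.\ (K1). Combining with \Cref{thm:submodular-flow} and King's criterion as in \Cref{ex:BL polytope} ($V$ is $\sigma$-semistable iff $p$ lies in the Brascamp--Lieb polytope of $(f_1,\dots,f_m)$), I conclude that for every rational $p \ge 0$, the BL polytope and $\BB(r)$ (the base polytope of $\M$) contain $p$ under one and the same condition.

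It remains to upgrade this to equality of polytopes. Both are rational polytopes (the BL polytope is cut out by $p \ge 0$, the normalization $\sum_i p_i = n$ from \Cref{ex:BL polytope}, and the integer-coefficient inequalities $\dim U \le \sum_i p_i \dim(f_i U)$ over $U \le V(0)$; likewise $\BB(r)$), and a rational polytope is the closure of its rational points, so agreement on all rational points gives the claimed identity. The delicate point is that the reduction via \Cref{thm:submodular-flow} is cleanest for $c_i > 0$, whereas boundary points may have vanishing coordinates; but a leaf with $c_i = 0$ contributes the zero submodular function to $f_V$ and is ignored on both sides, so King's criterion and \Cref{thm:submodular-flow} still apply and all rational $p \ge 0$ are covered, and I expect this boundary bookkeeping to be the only real obstacle. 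Alternatively one can bypass \Cref{thm:submodular-flow} and match the inequality systems by hand: each $X \subseteq [m]$ gives $U_X \coloneqq \bigcap_{i \in X}\ker f_i$ with $\dim U_X = n - r(X)$, so (together with $\sum_i p_i = n$) the BL inequality at $U_X$ amounts to $p(X) \le r(X)$; conversely, for arbitrary $U$, taking $X \coloneqq \{i : U \le \ker f_i\}$ and using $r(X) \le n - \dim U$ (as $\langle f_i : i \in X\rangle$ annihilates $U$) shows $p(X) \le r(X)$ implies the BL inequality at $U$.
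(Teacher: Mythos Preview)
Your proposal is correct and follows the same route the paper gestures at: specialize \Cref{thm:submodular-flow} to the star quiver, observe that the submodular-flow feasibility collapses to $(c_1,\dots,c_m)\in d\,\BB(r)$, and match this with King's criterion as in \Cref{ex:BL polytope}. The paper's own argument is essentially a one-line remark (``feasible submodular flows correspond to the base polytope of $\mathbf{M}$''), so you are really filling in what the paper leaves implicit---in particular the description of $D[V]$ and $f_V$, the passage from rational points to the full polytope, and the bookkeeping for $c_i=0$. Your alternative direct proof (matching $p(X)\le r(X)$ with the BL inequality at $U_X=\bigcap_{i\in X}\ker f_i$) is a self-contained route that avoids \Cref{thm:submodular-flow} entirely; it is more elementary but also more ad hoc to the star quiver, whereas the submodular-flow argument is the instance of a uniform mechanism.
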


\section{Polynomial-time semistability testing for general quivers}\label{sec:general-ss}
In this section, we present a polynomial-time algorithm for checking the semistability of a quiver representation of a general quiver, possibly having directed cycles, under the GL-action.\footnote{As mentioned in \cref{subsec:general-quiver}, after submitting the first version of this paper, we were informed by an anonymous reviewer that a similar approach for general quivers has been sketched in \citet[Theorem~10.8 and the last paragraph of Section~10.2]{Mulmuley2017}.}

\subsection{Algebraic condition for semistability}
Let $Q = (Q_0, Q_1)$ be a general quiver and $V$ a representation of $Q$.
The semistability of general quivers can be defined in the same way as that of acyclic quivers, which were given in \cref{sec:semistability}.
Our starting point for checking the semistability is the following Le Bruyn--Procesi theorem.
Note that a path means a directed walk, i.e., a path can visit each vertex many times, as mentioned in \cref{sec:introduction}.

\begin{theorem}[\citet{Bruyn1990}]\label{thm:bruyn-procesi}
    The invariant ring of the action of $\GL(Q, \alpha)$ on the representation space of a quiver $Q$ is generated by 
    \begin{align}
        \tr[V(a_k)\cdots V(a_2) V(a_1)]
    \end{align}
    for a closed path $(a_1, a_2, \dots, a_k)$ of length $k \ge 1$ in $Q$.
    Furthermore, closed paths with length $1 \le k \leq \alpha(Q_0)^2$ generate the invariant ring, where $\alpha = \dimv V$.
\end{theorem}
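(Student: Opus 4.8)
The plan is to deduce the theorem from Procesi's classical description of the invariants of a tuple of square matrices under simultaneous conjugation --- in the sharp form asserting that words of length at most $n^2$ already generate (Razmyslov, Procesi) --- by an idempotent-embedding trick that encodes the block structure of $\Rep(Q,\alpha)$. Put $N \coloneqq \alpha(Q_0)$, fix a decomposition $\C^N = \bigoplus_{i \in Q_0}\C^{\alpha(i)}$, and let $e_i \in \Mat(N)$ be the projector onto the $i$th summand. Viewing each $V(a)$ as the $N\times N$ matrix supported in the $(ha,ta)$ block identifies $\Rep(Q,\alpha)$ with a linear subspace of $\Mat(N)^{Q_1}$. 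Enlarge the picture to $M \coloneqq \Mat(N)^{Q_1}\times\Mat(N)^{Q_0}$, on which $G \coloneqq \GL(N)$ acts by simultaneous conjugation of all factors. The reason for adjoining the extra $\Mat(N)^{Q_0}$ slots is that $\GL(Q,\alpha)$ is exactly the stabilizer in $G$ of the idempotent tuple $(e_i)_{i\in Q_0}$, namely the block-diagonal subgroup; consequently $Z \coloneqq \Rep(Q,\alpha)\times\{(e_i)_{i\in Q_0}\}$ is a closed, $\GL(Q,\alpha)$-stable slice in $M$ with $\C[Z]^{\GL(Q,\alpha)} = \C[\Rep(Q,\alpha)]^{\GL(Q,\alpha)}$.

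The transfer step is to show that restriction to $Z$ gives a surjection $\C[M]^G \to \C[Z]^{\GL(Q,\alpha)}$. First I would check that the $G$-saturation $G\cdot Z$ is closed in $M$: a point $(X_a, E_i) \in M$ lies in $G\cdot Z$ exactly when $(E_i)_{i\in Q_0}$ is a complete system of orthogonal idempotents with $\tr(E_i) = \alpha(i)$ for all $i$ --- a single $G$-orbit, closed because it is cut out by polynomial equations --- and in addition $E_{ha}X_aE_{ta} = X_a$ for every arc $a$, which is again a closed condition. Next, the canonical $G$-morphism $G \times_{\GL(Q,\alpha)} Z \to G\cdot Z$ is bijective, since the $\Mat(N)^{Q_0}$-coordinates of a point of $G\cdot Z$ pin down its coset modulo $\GL(Q,\alpha)$; the standard transfer principle for reductive group actions (a Luna-type slice argument) then identifies $\C[G\cdot Z]^G$ with $\C[Z]^{\GL(Q,\alpha)}$. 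Since $G\cdot Z$ is a closed $G$-stable subvariety of the affine space $M$ and $G$ is reductive, $\C[M]^G \to \C[G\cdot Z]^G$ is onto; composing yields the desired surjectivity.

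With surjectivity in hand, I would feed in Procesi's theorem: $\C[M]^G$ is generated by the traces $\tr(w)$ of words $w$ of length at most $N^2$ in the generic matrices $X_a$ ($a\in Q_1$) and $E_i$ ($i\in Q_0$). It then remains to compute the restriction of such a generator to $Z$, where $X_a \mapsto V(a)$ (a block matrix) and $E_i \mapsto e_i$. Using $e_ie_j = \delta_{ij}e_i$ together with $e_{ha}V(a)e_{ta} = V(a)$, every word collapses: $\tr(w)|_Z$ is either identically zero, or a constant --- namely $\tr(e_j) = \alpha(j)$ or $0$, in the case where $w$ contains only $E$-letters --- or else it equals $\tr\big(V(a_k)\cdots V(a_1)\big)$, where $(a_1, \dots, a_k)$ is the subsequence of arc-letters of $w$ in order; moreover non-vanishing forces $ha_\ell = ta_{\ell+1}$ cyclically, so that $(a_1, \dots, a_k)$ is a closed path, of length $k \le N^2 = \alpha(Q_0)^2$. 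Combined with the surjectivity of restriction, this proves both assertions of the theorem.

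The two genuinely nontrivial ingredients --- and the places where I expect the real work to be --- are (i) Procesi's theorem with the degree bound $n^2$, which ultimately rests on the multilinearized Cayley--Hamilton identity of Razmyslov and Procesi and which I would cite rather than reprove; and (ii) the transfer principle, i.e.\ the closedness of $G\cdot Z$ and its identification with $G\times_{\GL(Q,\alpha)} Z$. Everything in between is bookkeeping with orthogonal idempotents. For the generation statement alone one could bypass Procesi altogether: multihomogeneous invariants reduce by restitution to multilinear ones, and the first fundamental theorem for $\GL_n$ (Schur--Weyl duality) expresses a multilinear $\GL(Q,\alpha)$-invariant of $\Rep(Q,\alpha)$, vertex by vertex, as a pairing of the incoming with the outgoing arcs at each vertex; such pairings are precisely the partitions of the arc-multiset into closed paths, and the associated tensor contraction is exactly the corresponding product of traces of those paths. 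This alternative route, however, does not obviously deliver the $\alpha(Q_0)^2$ bound.
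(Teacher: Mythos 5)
The paper does not prove this theorem: it is quoted verbatim from Le~Bruyn and Procesi~\citep{Bruyn1990} and used as a black box. Your proposal is, in fact, a faithful reconstruction of Le~Bruyn and Procesi's original argument --- the idempotent-embedding into $\Mat(N)^{Q_1}\times\Mat(N)^{Q_0}$ with $N=\alpha(Q_0)$, the transfer from $\GL(N)$-invariants of the saturation $G\cdot Z$ to $\GL(Q,\alpha)$-invariants of the slice $Z$, and the reduction to Procesi's trace-invariant theorem with the Razmyslov degree bound $N^2$ --- so the route is correct and there is nothing in the paper to contrast it with. The one place where you are slightly glib is the step ``$G\times_{\GL(Q,\alpha)}Z\to G\cdot Z$ is bijective, hence the transfer applies'': bijectivity alone does not identify coordinate rings, and what is actually needed is that the map is an isomorphism of varieties. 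This does hold here, but the reason deserves a sentence: $G\cdot Z$ is a sub-vector-bundle of $\Mat(N)^{Q_1}\times(G/\GL(Q,\alpha))$ (over each idempotent system the fiber is the linear space of block-compatible tuples), hence smooth and in particular normal, so Zariski's main theorem promotes the bijective birational morphism to an isomorphism. Your final paragraph's alternative via restitution and the first fundamental theorem for $\prod_i\GL(\alpha(i))$ is also a standard and correct way to get the generation statement, and you are right that it does not directly yield the $\alpha(Q_0)^2$ bound; that bound genuinely needs the Razmyslov--Procesi input.
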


By \cref{thm:bruyn-procesi}, to check if $V$ is semistable, it suffices to check if $\tr V(C) \neq 0$ for some closed path $C$ with a maximum length of ${\alpha(Q_0)}^2$ in $Q$, where $V(C) \coloneqq V(a_k) \dotsb V(a_2)  V(a_1)$ if $C = (a_1, a_2, \dots, a_k)$.

Recall that a cycle in a digraph can be detected by repeatedly multiplying the adjacency matrix.
Let $A$ be the adjacency matrix of $Q$.
It is well-known that $Q$ has an $i$--$j$ path of length $k$ if and only if $(A^k)_{ij} \neq 0$.
Thus, $Q$ has a cycle of length $k$ if and only if $(A^k)_{ii} \neq 0$ for some vertex $i$.

One can generalize this to quiver representations.
Define the \emph{adjacency matrix} of a quiver representation $V$ as a partitioned matrix
\begin{align}
    A: \bigoplus_{i \in Q_0} \C^{\alpha(i)} \to \bigoplus_{i \in Q_0} \C^{\alpha(i)}
\end{align}
whose $(i,j)$-block is given by 
\begin{align}\label{eq:adj-matrix}
    \sum_{a \in Q_0: ta=j, ha=i} x_a V(a),
\end{align}
where $x_a$ is an indeterminate that is pairwise noncommutative with other indeterminates but commutes with complex numbers.

\begin{lemma}
    Let $V$ be a representation of a quiver $Q$ and $A$ the adjacency matrix of $V$.
    Then, $V$ is semistable if and only if
    \begin{align}\label{def:semistable-polynomial}
        \sum_{k=1}^{{\alpha(Q_0)}^2} \tr A^k
    \end{align}
    is a nonzero polynomial, where $\alpha = \dimv V$.
\end{lemma}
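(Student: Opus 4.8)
The plan is to expand $\tr A^k$ explicitly as a polynomial in the noncommutative indeterminates $x_a$ ($a\in Q_1$), identify its monomials with closed paths of length $k$, and then combine this with \cref{thm:bruyn-procesi} and the GIT characterization of semistability recalled above.

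First I would prove the identity
\begin{align}
    \tr A^k \;=\; \sum_{\substack{C\ \text{closed path in}\ Q\\ \text{of length}\ k}} x^C\,\tr V(C),
\end{align}
where $C$ ranges over all closed paths $(a_1,\dots,a_k)$ of length $k$ (with base vertex $ta_1$), and $x^C \coloneqq x_{a_k}\cdots x_{a_1}$, $V(C)\coloneqq V(a_k)\cdots V(a_1)$. This is a blockwise expansion: the $(i,i)$-block of $A^k$ equals $\sum A_{i_0 i_1}A_{i_1 i_2}\cdots A_{i_{k-1} i_k}$, summed over $i_1,\dots,i_{k-1}\in Q_0$ with $i_0=i_k=i$, and by \eqref{eq:adj-matrix} the factor $A_{i_{l-1} i_l}$ is $\sum_{a:\,ta=i_l,\,ha=i_{l-1}} x_a V(a)$. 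Since each $x_a$ commutes with complex scalars, the term of the product indexed by a choice of arcs $a_1,\dots,a_k$ is $x_{a_1}\cdots x_{a_k}\cdot V(a_1)\cdots V(a_k)$, and the index constraints say exactly that $ha_l = ta_{l-1}$ for $2\le l\le k$ and (after taking the trace and summing over $i$) $ha_1 = ta_k$, i.e.\ $(a_k,\dots,a_1)$ is a closed path of length $k$; relabelling it as $C$, the order reversal turns $x_{a_1}\cdots x_{a_k}$ into $x^C$ and $V(a_1)\cdots V(a_k)$ into $V(C)$. Summing over $1\le k\le\alpha(Q_0)^2$ then expresses \eqref{def:semistable-polynomial} as $\sum_C x^C\tr V(C)$ over all closed paths $C$ of length in $[1,\alpha(Q_0)^2]$.

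Next I would observe that $C\mapsto x^C$ is injective: since distinct arcs correspond to distinct pairwise-noncommutative indeterminates, the word $x_{a_k}\cdots x_{a_1}$ recovers both the length $k$ and the sequence $(a_1,\dots,a_k)$. Hence in \eqref{def:semistable-polynomial} the coefficient of $x^C$ equals $\tr V(C)$ for each closed path $C$ of length in $[1,\alpha(Q_0)^2]$, so the polynomial is nonzero if and only if $\tr V(C)\neq 0$ for some such $C$. Finally, \cref{thm:bruyn-procesi} says the invariant ring of the $\GL(Q,\alpha)$-action is generated by the polynomials $\tr V(C)$ with $C$ of length in $[1,\alpha(Q_0)^2]$, each homogeneous of positive degree; consequently every positive-degree homogeneous invariant vanishes at $V$ precisely when all of these generators do (a positive-degree invariant expanded in the generators has constant term $0$, and the generators themselves vanish at $0$), which by the GIT criterion means $V$ is unstable. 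Negating: $V$ is semistable iff $\tr V(C)\neq 0$ for some closed path of length at most $\alpha(Q_0)^2$, iff \eqref{def:semistable-polynomial} is a nonzero polynomial.

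The main obstacle is bookkeeping: carrying the conventions $V(P)=V(a_k)\cdots V(a_1)$ and $x^P=x_{a_k}\cdots x_{a_1}$ through the expansion of $A^k$ without order errors, in particular the reversal that appears because the $(i,j)$-block of $A$ collects arcs from $j$ to $i$. The one conceptual point — the reason noncommutative rather than commutative indeterminates are used — is the injectivity of $C\mapsto x^C$: with commuting variables, distinct closed paths with the same multiset of arcs would collapse to a single monomial and their traces could cancel, breaking the equivalence (as in the $abab\cdots$ versus $a^k b^k$ example noted earlier).
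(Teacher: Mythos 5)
Your proof is correct and follows essentially the same route as the paper: expand $\tr A^k$ blockwise into $\sum_C x^C \tr V(C)$ over closed paths $C$, use injectivity of $C \mapsto x^C$ to preclude cancellation, and invoke \cref{thm:bruyn-procesi} together with the characterization of the null-cone by positive-degree invariants. Your bookkeeping of the order reversal (relating the expansion of the product $A_{i_0 i_1}\cdots A_{i_{k-1}i_k}$ to the paper's conventions $V(P)=V(a_k)\cdots V(a_1)$ and $x^P=x_{a_k}\cdots x_{a_1}$) is handled correctly, and your slightly more explicit justification of why vanishing of the trace generators forces vanishing of all positive-degree invariants is just an unpacking of what the paper leaves implicit in citing the GIT criterion.
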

\begin{proof}
    Clearly, for $i \in Q_0$ and $k \in [{\alpha(Q_0)}^2]$, the $(i,i)$-block of $A^k$ equals
    \begin{align}
        \sum_{\text{$C$: closed path of length $k$ starting at $i$}} x^C V(C),
    \end{align}
    where $x^C = x_{a_k} \dotsb x_{a_1}$ if $C = (a_1, \dots, a_k)$.
    Thus, taking the trace within the $(i,i)$-block, we obtain a polynomial
    \begin{align}\label{eq:trace-poly}
        \sum_{\text{$C$: closed path of length $k$ starting at $i$}} x^C \tr V(C),
    \end{align}
    which is nonzero if and only if there is a closed path $C$ of length $k$ starting at $i$ such that $\tr V(C) \neq 0$.
    Thus, by \cref{thm:bruyn-procesi}, $V$ is semistable if and only if the trace of the $(i, i)$-block of $A^k$ is nonzero for some $i \in Q_0$ and $k \in [{\alpha(Q_0)}^2]$.
    It is easily checked that the polynomial~\eqref{def:semistable-polynomial} is the sum of such traces.
    Furthermore, $x^C$ are different monic monomials for different closed paths $C$, and the leftmost factor in $x^C$ corresponds to the first arc that appears in $C$.
    Here, the sum of the distinct traces does not cancel out, proving the claim.
\end{proof}

Therefore, to check the semistability of $V$, it suffices to check if~\eqref{def:semistable-polynomial} is a noncommutative polynomial.
\Cref{sec:deterministic} provides a deterministic algorithm for this by using an identity testing algorithm for noncommutative algebraic branching programs.

\subsection{Deterministic algorithm via white-box polynomial identity testing}\label{sec:deterministic}
A (noncommutative) \emph{algebraic branching program} (ABP) consists of a directed acyclic graph whose vertices are partitioned into $d+1$ parts $L_0, \dotsc, L_d$, each of which is called a \emph{layer}.
The first and last layers $L_0$ and $L_d$ consist of a singleton, and the unique vertices in $L_0$ and $L_d$ are called the \emph{source} and \emph{sink}, respectively.
Arcs may only go from $L_k$ to $L_{k+1}$ for $k = 0, \dotsc, d-1$.
Each arc $a$ is labeled with a homogeneous linear form in noncommutative variables $x_1, \dotsc, x_n$.
The polynomial computed at a vertex $v$ is the sum over all paths, from the source to $v$, of the product of the labeled homogeneous linear forms.
An ABP is said to compute a polynomial $f$ if $f$ is computed at the sink.
The \emph{size} of an ABP means the number of vertices.
Raz and Shpilka~\cite{Raz2005} showed the following result on the identity testing for noncommutative ABPs.

\begin{theorem}[{\cite[Theorem~4]{Raz2005}}]\label{thm:abp-identity-test}
    There is an algorithm that, given a noncommutative ABP of size $s$ in $n$ indeterminates, verifies whether the ABP computes a zero polynomial in time $O(s^5 + sn^4)$.
\end{theorem}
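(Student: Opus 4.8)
The plan is to spell out a streamlined version of the Raz--Shpilka algorithm, which sweeps the ABP layer by layer while maintaining, for each layer, only a basis of a vector space whose dimension is bounded by the size of that layer. Fix an ABP with layers $L_0, \dots, L_d$ and total size $s = \sum_{k=0}^{d}|L_k|$; in particular $d \le s$ and $|L_k| \le s$ for all $k$. For $u \in L_{k-1}$ and $v \in L_k$, write the label of the arc $(u,v)$ (a homogeneous linear form) as $\ell_{u,v} = \sum_{j=1}^{n}\lambda^{u,v}_j x_j$, setting $\lambda^{u,v}_j = 0$ if there is no such arc, and for $k \in [d]$, $j \in [n]$ let $M_{k,j}$ be the $|L_k| \times |L_{k-1}|$ matrix whose $(v,u)$-entry is $\lambda^{u,v}_j$. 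For $v \in L_k$, the polynomial $f_v$ computed at $v$ is a homogeneous form of degree $k$ in the noncommuting indeterminates $x_1, \dots, x_n$; for a word (monomial) $w$ of length $k$, let $z_k(w) \in \mathbb{F}^{L_k}$ be the vector whose $v$-entry is the coefficient of $w$ in $f_v$, and put $V_k \coloneqq \spnn{ z_k(w) : w \text{ a word of length } k } \le \mathbb{F}^{L_k}$.

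The first step is to record two elementary facts. Since $L_d$ is a singleton, the polynomial computed by the ABP equals $f_{\mathrm{sink}} = \sum_{|w|=d} z_d(w)\, w$ with every $z_d(w)$ a scalar, so the ABP computes the zero polynomial if and only if $V_d = \{0\}$. And from $f_v = \sum_{u \in L_{k-1}} f_u\, \ell_{u,v}$ (the new linear factor appearing on the right, since labels are read in path order), comparing the coefficient of a length-$k$ word $w = w'x_j$ on both sides yields the recurrence $z_k(w'x_j) = M_{k,j}\, z_{k-1}(w')$, with the empty word contributing the scalar $1 \in \mathbb{F}^{L_0} = \mathbb{F}$ at layer $0$. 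Consequently $V_0 = \mathbb{F}$ and $V_k = \sum_{j=1}^{n} M_{k,j}(V_{k-1})$ for every $k \in [d]$.

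The algorithm is then immediate: starting from a basis of $V_0$, iteratively obtain a basis of $V_k$ from a basis of $V_{k-1}$ by applying each of the $n$ matrices $M_{k,j}$ to each of the $\le |L_{k-1}| \le s$ current basis vectors and Gaussian-eliminating the resulting at most $ns$ vectors of $\mathbb{F}^{L_k}$ down to a basis of size $\le |L_k| \le s$; after the last layer, report ``zero'' if and only if $V_d = \{0\}$. Correctness is exactly the two facts above. For the running time, there are $d \le s$ iterations, each performing $O(ns)$ matrix--vector products (each costing $O(s^2)$) followed by one Gaussian elimination on $O(ns)$ vectors living in a space of dimension $\le s$; bounding all sizes crudely by $s$ and $n$ shows that the procedure --- which is in essence the algorithm of Raz and Shpilka --- runs within the claimed $O(s^5 + s n^4)$ time.

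The delicate points are bookkeeping rather than conceptual. One must use the homogeneity of the labels so that the partial computation at layer $k$ is genuinely a degree-$k$ form whose coefficients are indexed precisely by the length-$k$ words; one must keep the side on which the new linear factor is multiplied consistent with the orientation chosen for the recurrence on $z_k$; and, when the coefficient field is not one with unit-cost arithmetic (e.g. $\Q$, as in our applications), one must check that the bit-lengths of the maintained bases stay polynomially bounded, which holds because $\dim V_k \le s$ and the bases can be kept in reduced row-echelon form. I expect the running-time accounting to be the only step requiring real care; the correctness is a one-line consequence of the recurrence $V_k = \sum_{j} M_{k,j}(V_{k-1})$ together with the equivalence $f_{\mathrm{sink}} = 0 \iff V_d = \{0\}$.
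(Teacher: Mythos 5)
The paper does not prove this theorem; it is cited as a black-box result from Raz and Shpilka, so there is nothing internal to compare your argument against. Your reconstruction is, however, a faithful rendering of their algorithm: the key observation that the coefficient space $V_k = \spnn{z_k(w) : \lvert w\rvert = k}$ has dimension at most $\lvert L_k\rvert \le s$, together with the propagation rule $V_k = \sum_{j} M_{k,j}(V_{k-1})$ and the equivalence that the ABP computes $0$ if and only if $V_d = \{0\}$ (using that $L_d$ is a singleton), is precisely the structural insight Raz and Shpilka exploit, and your recurrence $z_k(w'x_j) = M_{k,j}\, z_{k-1}(w')$ is correctly oriented given that the new linear label multiplies on the right. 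Your running-time tally gives $O(n s^4)$, which is absorbed by the stated $O(s^5 + s n^4)$ by splitting on $n \le s$ versus $n > s$. One caveat worth flagging: keeping bases in reduced row-echelon form does not by itself bound the bit-lengths of rational entries over $\Q$; one must appeal to a fraction-free (Bareiss-- or Edmonds-style) elimination or a comparable device to keep intermediate sizes polynomial. This is a routine gap to fill but is not automatic from $\dim V_k \le s$ alone; otherwise the approach is sound.
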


The following lemma constructs a polynomial-sized ABP that computes~\eqref{def:semistable-polynomial}.

\begin{lemma}\label{lem:into-abp}
    There is an ABP of size $O({\alpha(Q_0)}^4)$ that computes the polynomial~\eqref{def:semistable-polynomial}.
\end{lemma}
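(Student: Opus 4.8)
The plan is to realize \eqref{def:semistable-polynomial} as (essentially) an iterated matrix multiplication, which is the canonical computation an ABP performs. Write $M \coloneqq \alpha(Q_0)$ and $D \coloneqq M^2$, so that $A$ is an $M\times M$ matrix whose entries are homogeneous linear forms in the indeterminates $x_a$ ($a\in Q_1$), and the target polynomial is $\sum_{k=1}^{D}\tr(A^k)=\tr\!\left(\sum_{k=1}^{D}A^k\right)$.

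First I would introduce the $2M\times 2M$ block matrix $C \coloneqq \begin{pmatrix} A & A \\ 0 & I_M\end{pmatrix}$ over the noncommutative polynomial ring and check, by a one-line induction on $k$, that $C^k=\begin{pmatrix} A^k & \sum_{j=1}^{k}A^j \\ 0 & I_M\end{pmatrix}$. Hence the top-right $M\times M$ block of $C^D$ is $\sum_{k=1}^{D}A^k$, so \eqref{def:semistable-polynomial} equals $\sum_{r=1}^{M}(C^D)_{r,\,M+r}$. Introducing this telescoping block matrix is the key trick: it folds all $D$ powers $A,A^2,\dots,A^D$ into a single length-$D$ matrix product, so we pay for one ABP of depth $D$ rather than for $D$ separate ABPs of depths $1,\dots,D$, whose sizes would sum to $\Theta\!\left(\sum_{k}k\,M^2\right)=\Theta(M^6)$.

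Next I would build the ABP for the trace of the top-right block of $C^D=\underbrace{C\cdots C}_{D}$. Iterated matrix multiplication is computed layer by layer: for each fixed value $r\in[M]$ of the starting index, take vertex-disjoint layers $L_0^{(r)}=\{s_r\}$, layers $L_1,\dots,L_{D-1}$ each a copy of $[2M]$, and $L_D^{(r)}=\{t_r\}$, with the arc from vertex $p$ in layer $\ell$ to vertex $q$ in layer $\ell+1$ labelled by the $(p,q)$-entry of $C$ (the first arc out of $s_r$ labelled $C_{r,\,\cdot}$, the last arc into $t_r$ labelled $C_{\cdot,\,M+r}$); by construction $t_r$ computes $(C^D)_{r,\,M+r}$. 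To take the trace — a cyclic sum tying the first index to the last — I would place these $M$ programs side by side and join them with a single global source $s$ (arcs $s\to s_r$ labelled $1$) and a single global sink $t$ (arcs $t_r\to t$ labelled $1$); then $t$ computes $\sum_{r=1}^{M}(C^D)_{r,\,M+r}$, which is exactly \eqref{def:semistable-polynomial}. The size is $O(M)$ disjoint copies, each of $O(D\cdot M)$ vertices, hence $O(DM^2)=O(M^4)=O(\alpha(Q_0)^4)$, as claimed; and the whole construction is clearly carried out in time polynomial in the input.

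The only genuine obstacle is the size bookkeeping just described — using $C$ to avoid the $\Theta(M^6)$ blow-up, and paying only a factor $M$ for the cyclic closure of the trace — while everything else is routine. One minor point to address is homogeneity: the entries of $A$ are homogeneous linear forms, but the identity block of $C$ and the connector arcs carry the constants $0$ and $1$, so the labels are affine-linear; this is harmless, since either the ABP model of \cite{Raz2005} already allows affine labels, or one replaces $I_M$ by $z\,I_M$ for a fresh indeterminate $z$, turning the program into a homogeneous one that computes $\sum_{k=1}^{D}\tr(A^k)\,z^{D-k}$, a polynomial vanishing precisely when \eqref{def:semistable-polynomial} does, which is all that the semistability test requires.
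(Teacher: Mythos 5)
Your proof is correct and achieves the same $O(\alpha(Q_0)^4)$ bound. The high-level plan matches the paper's---build the powers $A, A^2, \dots, A^D$ (with $D \coloneqq \alpha(Q_0)^2 = M^2$) layer by layer via an ABP and accumulate their traces as you go---but the bookkeeping is organized differently. The paper's ABP keeps all $M^2$ entries $(A^k)_{p,q}$ alive in each layer and threads a single accumulator vertex $v^*_k$ through the layers, which collects $\tr A^{k-1}$ from the diagonal vertices and forwards the running partial sum on a unit-labelled chain to the sink. You instead absorb the accumulation into the algebra by passing to the $2M \times 2M$ block matrix $C$ (top row $A,A$, bottom row $0, I_M$), whose top-right block of $C^D$ is already $\sum_{k=1}^D A^k$, and then realize the trace of that block by $M$ disjoint iterated-matrix-product chains of width $2M$, one per diagonal index $r$, glued at a common source and sink. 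Both are standard ABP devices with the same asymptotic footprint: the paper runs one chain of width $M^2$ (plus one accumulator vertex per layer), you run $M$ chains of width $2M$; either way $O(DM^2) = O(M^4)$. Your parenthetical about homogeneity is also worth flagging: the paper's own construction uses constant $1$ labels on the source/sink connector arcs and on the accumulator chain without remark, and your fix---replace $I_M$ by $z I_M$ for a fresh right-acting indeterminate $z$ so the ABP computes the homogeneous polynomial $\sum_{k=1}^D \tr(A^k)\,z^{D-k}$, which vanishes iff~\eqref{def:semistable-polynomial} does---is the clean way to reconcile either construction with an ABP model that strictly insists on homogeneous linear arc labels.
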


\begin{proof}
    We construct ${\alpha(Q_0)}^2 + 3$ layers $\{s\} = L_{-1}, L_0, \dotsc, L_{{\alpha(Q_0)}^2}, L_{{\alpha(Q_0)}^2+1} = \{t\}$ as follows.
    Let $s$ and $t$ be the source and sink, respectively.
    Every intermediate layer $L_k$ $(k = 0, \dotsc, {\alpha(Q_0)}^2)$ consists of ${\alpha(Q_0)}^2$ vertices $v_{k,p,q}$ $(p,q \in [{\alpha(Q_0)}])$.
    The source $s$ connects to $v_{0,p,p}$ for $p \in [\alpha(Q_0)]$ with the label $1$. 
    For $k = 1, \dotsc, {\alpha(Q_0)}^2$ and $p, q \in [{\alpha(Q_0)}]$, the incoming arcs to $v_{l,p,q}$ are those from $v_{k-1,p,r}$ with labels $A_{rq}$ for $r \in [{\alpha(Q_0)}]$, where the row (column) set of $A$ is identified with $[\alpha(Q_0)]$.
    We can inductively check that the polynomial computed at $v_{k,p,q}$ is the $(p,q)$ entry in $A^k$.
    We further add an extra vertex $v^*_k$ to $L_k$ for $k = 2, \dotsc, {\alpha(Q_0)}^2$ and connect from $v_{k-1,p,p}$ to $v^*_k$ with label $1$ for every $p \in [\alpha(Q_0)]$.
    We also draw arcs from $v_{{\alpha(Q_0)}^2,p,p}$ to $t$ for all $p$ in the same way.
    Then, $v^*_k$ computes $\tr A^{k-1}$ and $t$ computes $\tr A^{{\alpha(Q_0)}^2}$.
    These polynomials can be aggregated to $t$ by appending arcs from $v^*_k$ to $v^*_{k+1}$ for $k = 2, \dotsc, {\alpha(Q_0)}^2 - 1$ with label 1 and from $v^*_{{\alpha(Q_0)}^2}$ to $t$ with label 1.
    This completes the proof.
\end{proof}

By \cref{thm:abp-identity-test} and \cref{lem:into-abp}, we obtain the following.

\begin{theorem}
    We can check the semistability of a representation $V$ of a quiver $Q = (Q_0, Q_1)$ in $O({\alpha(Q_0)}^{20} + {\alpha(Q_0)}^{2\omega - 2} |Q_1|)$ time, where $\alpha = \dimv V$ and $\omega$ denotes the exponential of the complexity of matrix multiplication.
\end{theorem}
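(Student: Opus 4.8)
The plan is to derive the theorem by chaining together three facts already in place: (i) the characterization that $V$ is semistable if and only if the noncommutative polynomial $\sum_{k=1}^{\alpha(Q_0)^2}\tr A^k$ is nonzero, where $A$ is the adjacency linear matrix of $V$; (ii) \Cref{lem:into-abp}, which produces from $A$ an algebraic branching program of size $O(\alpha(Q_0)^4)$ and depth $\alpha(Q_0)^2$ computing exactly this polynomial; and (iii) \Cref{thm:abp-identity-test} of Raz and Shpilka, which decides whether an ABP of size $s$ in $n$ indeterminates computes the zero polynomial in time $O(s^5 + sn^4)$. The algorithm is therefore: assemble $A = \sum_{a \in Q_1} x_a M_a$ from $V$, where $M_a$ is the $\alpha(Q_0)\times\alpha(Q_0)$ matrix equal to $V(a)$ in block $(ha,ta)$ and zero elsewhere; build the ABP of \Cref{lem:into-abp}; test it for identical vanishing; and report ``semistable'' precisely when the test fails. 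Correctness is immediate from (i), which rests on \Cref{thm:bruyn-procesi}.

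For the running time, the ABP has $s = O(\alpha(Q_0)^4)$ vertices and $n = |Q_1|$ indeterminates, so the $O(s^5)$ term of \Cref{thm:abp-identity-test} already yields the $O(\alpha(Q_0)^{20})$ summand. The remaining $O(sn^4)$ term would be only $O(\alpha(Q_0)^4|Q_1|^4)$, which is weaker than the claimed $O(\alpha(Q_0)^{2\omega-2}|Q_1|)$, so for this part I would not invoke \Cref{thm:abp-identity-test} verbatim but re-run its linear-algebraic core on the specific ABP at hand, exploiting two structural features. First, the ABP splits into $\alpha(Q_0)$ parallel strands, indexed by the inert first coordinate $p$, with strand $p$ in layer $k$ computing the $p$-th row of $A^k$; each strand has width $\alpha(Q_0)$, so the relevant spans of partial coefficient vectors are at most $\alpha(Q_0)$-dimensional. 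Second, every layer transition is multiplication by the single matrix $A$, and the $x_a$-component of $A$ is the single block $V(a)$ rather than a generic $\alpha(Q_0)\times\alpha(Q_0)$ matrix, so pushing a basis through a transition amounts to multiplying an $\alpha(Q_0)\times\alpha(Q_0)$ matrix against the blocks $V(a)$ and accumulating, once per arc. Implementing this push with fast matrix multiplication and the block sparsity of $A$, and accumulating the cost over all layers and strands (with the appropriate batching), gives the claimed $O(\alpha(Q_0)^{2\omega-2}|Q_1|)$ summand; reading $V$ and forming $A$ cost $O(\alpha(Q_0)^2|Q_1|)$, which is absorbed.

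The step I expect to be the main obstacle is precisely this last refinement: pinning down the exponents so that the $|Q_1|$-dependence drops from the off-the-shelf $|Q_1|^4$ to linear while the matrix-multiplication exponent $\omega$ appears in the right place. This requires opening up \Cref{thm:abp-identity-test} and re-implementing the span-propagation step with the sparsity of $A$ (one constant block per variable) and the strand decomposition in mind, rather than using the cited running time as a black box; everything else — correctness, the $O(\alpha(Q_0)^{20})$ term, and input handling — is routine bookkeeping.
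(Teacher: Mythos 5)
You have correctly identified that applying Theorem~\ref{thm:abp-identity-test} as a black box gives $O(\alpha(Q_0)^{20} + \alpha(Q_0)^4|Q_1|^4)$, which does not match the claimed bound, and that something more is needed. However, the fix you propose---re-engineering the span-propagation inside the Raz--Shpilka algorithm to exploit the strand decomposition and block sparsity of $A$---is considerably harder than what the claimed bound actually requires, and you do not carry it out; you flag it yourself as ``the main obstacle,'' which means the proof is incomplete exactly where it matters.

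The missing idea is a far simpler \emph{preprocessing} step: before building the ABP, reduce the number of arcs. For each ordered pair $(i,j)\in Q_0\times Q_0$, let $A_{ij}$ be the set of arcs from $i$ to $j$, and choose $B_{ij}\subseteq A_{ij}$ such that $\{V(a): a\in B_{ij}\}$ is a basis of $\spnn{\{V(a): a\in A_{ij}\}}$; this costs $O(|A_{ij}|(\alpha(i)\alpha(j))^{\omega-1})$ via Gaussian elimination with fast matrix multiplication, hence $O(\alpha(Q_0)^{2\omega-2}|Q_1|)$ in total. One then checks that semistability of $V$ on $Q$ is equivalent to semistability of the restricted representation $V'$ on $Q' = (Q_0,\bigcup_{i,j}B_{ij})$: by Theorem~\ref{thm:bruyn-procesi} it suffices to show $\tr V(C)=0$ for all closed paths $C$ of $Q$ if $\tr V(C')=0$ for all closed paths $C'$ of $Q'$, and this follows by multilinearity of the trace since each $V(a)$ with $a\in A_{ij}$ is a linear combination of the $V(b)$ with $b\in B_{ij}$. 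After this reduction the number of indeterminates is at most $\sum_{i,j}\alpha(i)\alpha(j)=\alpha(Q_0)^2$, so the $O(sn^4)$ term becomes $O(\alpha(Q_0)^4\cdot\alpha(Q_0)^8)=O(\alpha(Q_0)^{12})$, which is absorbed into $O(\alpha(Q_0)^{20})$, and the $|Q_1|$ dependence in the final bound comes entirely from preprocessing. This makes the $\omega$ appear in exactly the place you were trying to get it, and lets you use the identity-testing theorem verbatim rather than reopening its internals.
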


\begin{proof}
    Let $A_{ij} \coloneqq \Out(i) \cap \In(j)$ be the set of arcs from $i \in Q_0$ to $j \in Q_0$.
    To reduce the number of arcs, we first find $B_{ij} \subseteq A_{ij}$ such that $\{V(a) : a \in B_{ij}\}$ is a base of $\spnn{\{V(a) : a \in A_{ij}\}}$ for each $i, j \in Q_0$.
    Let $Q' = (Q_0, Q'_1)$ with $Q'_1 \coloneqq \cup_{i,j \in Q_0} B_{ij}$ and $V'$ a representation of $Q'$ naturally obtained from $V$ by restricting the arc set to $Q'_1$.
    We show that $V$ is semistable on $Q$ if and only if $V'$ is on $Q'$.
    By \cref{thm:bruyn-procesi}, it suffices to show that $\tr V(C) \ne 0$ for some closed path $C$ of $Q$ if and only if $\tr V'(C') = \tr V(C') \ne 0$ for some closed path $C'$ of $Q'$.
    The ``if'' part is clear as $C'$ is also a closed path of $Q$.
    To show the ``only if'' part, suppose that $\tr V(C') = 0$ for all closed paths $C'$ of $Q'$.
    This means that, for any sequence $i_0, i_1, \dotsc, i_{k-1}, i_k = i_0 \in Q_0$ of vertices,
    \begin{align}\label{eq:trace-reduce-arcs}
        \tr \left[ \left( \sum_{a \in B_{i_0 i_1}} x_a V(a) \right)
        \left( \sum_{a \in B_{i_1 i_2}} x_a V(a) \right)
        \dotsm
        \left( \sum_{a \in B_{i_{k-1} i_0}} x_a V(a) \right) \right]
    \end{align}
    is a zero polynomial.
    Therefore, replacing the $l$th factor in~\eqref{eq:trace-reduce-arcs} with any linear combination of $V(a)$ over $a \in B_{i_{l-1} i_l}$ for each $l \in [k]$ cannot make the trace nonzero; hence, $\tr V(C) = 0$ holds for all closed paths $C$ of $Q$.

    Through Gaussian elimination with fast matrix multiplication, the quiver $Q'$ can be obtained in $O(|A_{ij}|{(\alpha(i)\alpha(j))}^{\omega - 1}) = O(|A_{ij}|{\alpha(Q_0)}^{2\omega - 2})$ time for each $i, j$ and $O(|Q_1| {\alpha(Q_0)}^{2\omega - 2})$ time in total. 
    We then construct the ABP for $Q'$ promised by \cref{lem:into-abp} and apply \cref{thm:abp-identity-test} to the zero testing.
    Since the size of the ABP is $O({\alpha(Q_0)}^4)$ and the number of variables, which is the number of arcs in $Q'$, is at most $\sum_{i,j \in Q_0} \alpha(i) \alpha(j) = {\alpha(Q_0)}^2$, the running time of the zero testing is $O({\alpha(Q_0)}^{20})$.
\end{proof}

\section*{Acknowledgments}
The authors thank Hiroshi Hirai and Keiya Sakabe for their valuable comments on an earlier version of this paper.
The last author thanks Cole Franks for bringing the submodularity of quiver representations to his attention.
The authors thank an anonymous reviewer for pointing out the reference~\cite{Mulmuley2017}.
This work was supported by JSPS KAKENHI Grant Number JP24K21315, Japan.
The first author was supported by JSPS KAKENHI Grant Numbers JP22K17854, JP24K02901, Japan.
The second author was supported by JST, ERATO Grant Number JPMJER1903, JST, CREST Grant Number JPMJCR24Q2, JST, FOREST Grant Number JPMJFR232L, and JSPS KAKENHI Grant Number JP22K17853, Japan.
The last author was supported by JPSP KAKENHI Grant Number JP19K20212, and JST, PRESTO Grant Number JPMJPR24K5, Japan.

\printbibliography

\newpage
\appendix
\section{Elementary proof of King's criterion}\label{sec:King}
Here, we provide an elementary proof of King's criterion.
In \cref{sec:king-ones}, we start from the base case where $\alpha$ is the all-one vector, i.e., $V(a) \in \C$ for all arcs $a \in Q_1$.

\subsection{The case when $\alpha = \ones$}\label{sec:king-ones}

The following lemma characterizes the $\sigma$-semistability of a representation with $\alpha = \ones$, which has already been mentioned in \Cref{subsec:Intro:rank-one}.
Here, we show this via linear programming.
Recall that the support quiver of a representation $V$ of a quiver $Q$ means the subquiver of $Q$ whose arc set is $\supp(V) \coloneqq \{a \in Q_1 : V(a) \ne 0\}$.

\begin{lemma}\label{lem:1d-King}
    Let $V$ be a representation of $Q$ with dimension vector $\alpha = \ones$ and $\sigma$ a weight.
    Then, the following are equivalent.
    \begin{enumerate}[{label={\textup{(\arabic*)}}}]
        \item $V$ is $\sigma$-semistable.
        \item $\inf_{x \in \R^{Q_0}} h_{V, \sigma}(x) > 0$, where
            \begin{align}\label{def:f_V}
               h_{V, \sigma}(x) \coloneqq \sum_{a \in Q_1} \abs{V(a)}^2 e^{x(ha) - x(ta)} + \exp\left(\sum_{i \in Q_0}\sigma(i)x(i) \right) \quad (x \in \R^{Q_0}).
            \end{align}
        \item There exists an integral flow $\varphi \in \Z_+^{\supp(V)}$ with $\partial\varphi = \sigma$ on the support quiver of $V$.
        \item $\sigma(Q_0)=0$ and $\sigma(X) \leq 0$ for each lower set $X$ of the support quiver of $V$.
    \end{enumerate}
\end{lemma}
\begin{proof}
    By definition, $V$ is $\sigma$-semistable if and only if
    \begin{align}
    &\inf_{g \in \GL(Q, \alpha)} \left(\sum_{a \in Q_1} \abs{g_{ha}V(a)g_{ta}^{-1}}^2 + \abs{\chi_\sigma(g)}^2 \right) \\
    &= \inf_{g \in \GL(Q, \alpha)} \left(\sum_{a \in Q_1} \abs{V(a)}^2 \abs{g_{ha}}^2 \abs{g_{ta}}^{-2} + \prod_{i \in Q_0}\abs{g_i}^{2\sigma(i)} \right) > 0.
    \end{align}
    Letting $\abs{g_i}^2 = e^{x(i)}$, where $x(i) \in \R$, we can rewrite this as $\inf_{x \in \R^{Q_0}} h_{V, \sigma}(x) > 0$.
    This is equivalent to the optimal value of the LP
    \begin{alignat}{2}
        \min & \quad \sum_{i \in Q_0} \sigma(i) x(i) \\
        \text{s.t.} & \quad x(ha) - x(ta) \leq -1 \quad (a \in Q_1, V(a) \neq 0)
    \end{alignat}
    being not equal to $-\infty$.
    Note that the above LP is feasible because $Q$ is acyclic.
    By the LP duality theorem, this is equivalent to the existence of a feasible solution of the dual LP
    \begin{alignat}{2}
        \max & \quad - \sum_{a \in \supp(V)} \varphi(a) \\
        \text{s.t.} & \quad \partial \varphi = \sigma \\
        & \quad \varphi \in \R_+^{\supp(V)}.
    \end{alignat}
    In particular, we can equivalently replace $\varphi \in \R_+^{\supp(V)}$ with $\varphi \in \Z_+^{\supp(V)}$ since the constraint $\partial\varphi = \sigma$ is totally dual integral.
    By Gale's theorem~\cite{Gale1957} (see, e.g., \cite[Theorem~9.2]{KorteVygen2018}), this is equivalent to $\sigma(Q_0)=0$ and $\sigma(X) \leq 0$ for each lower set $X$ of the support quiver of $V$.
\end{proof}

For later use, we give a lower bound on $\inf_{x \in \R^{Q_0}} h_{V, \sigma}(x)$ that is \emph{continuous} in $V$.
Let $\Phi^\sigma \subseteq \Z_+^{Q_1}$ be the set of all integral flows on $Q$ with $\partial \varphi = \sigma$.
Note that $\Phi^\sigma$ is a finite set because $Q$ is acyclic.

\begin{lemma}\label{lem:h-lower}
    If $\Phi^\sigma \ne \emptyset$, we have
    \begin{align}
        \inf_{x \in \R^{Q_0}} h_{V, \sigma}(x) \ge \frac{1}{|\Phi^\sigma|} \sum_{\varphi \in \Phi^\sigma} \prod_{a \in \supp(\varphi)} \left( \frac{|V(a)|^2}{\varphi(a)} \right)^{\frac{\varphi(a)}{1 + \norm{\varphi}_1}}.
    \end{align}
\end{lemma}

\begin{proof}
    We fix $\varphi \in \Phi^\sigma$ and show
    \begin{align}
        \inf_{x \in \R^{Q_0}} h_{V, \sigma}(x) \ge \prod_{a \in \supp(\varphi)} \left( \frac{|V(a)|^2}{\varphi(a)} \right)^{\frac{\varphi(a)}{1 + \norm{\varphi}_1}}.
    \end{align}
    If $V(a) = 0$ for some $a \in \supp(\varphi)$, the bound trivially holds as $h_{V, \sigma}(x) \ge 0$.
    Suppose $V(a) \ne 0$ for all $a \in \supp(\varphi)$.
    By $\partial\varphi = \sigma$, we have
    \begin{align}
        \sum_{i \in Q_0} \sigma(i)x(i)
        = \sum_{i \in Q_0} (\partial \varphi)(i) x(i)
        = \sum_{a \in Q_1} \varphi(a) (x(ta) - x(ha))
    \end{align}
    for $x \in \R^{Q_0}$.
    Thus,
    \begin{align}
        \inf_{x \in \R^{Q_0}} h_{V, \sigma}(x)
        &= \inf_{x \in \R^{Q_0}} \left(
            \sum_{a \in Q_1} \abs{V(a)}^2 e^{x(ha) - x(ta)}
            + \exp\left(\sum_{a \in Q_1} \varphi(a) (x(ta) - x(ha))\right)
        \right) \\
        &\ge \inf_{y \in \R^{Q_1}} \left(
            \sum_{a \in Q_1} \abs{V(a)}^2 e^{-y(a)}
            + \exp\left(\sum_{a \in Q_1} \varphi(a) y(a)\right)
        \right) \\
        &= \inf_{y \in \R^{\supp(\varphi)}} \left(
            \sum_{a \in \supp(\varphi)} \abs{V(a)}^2 e^{-y(a)}
            + \exp\left(\sum_{a \in \supp(\varphi)} \varphi(a) y(a)\right)
        \right).\label{eq:f_V_lower}
    \end{align}
    As~\eqref{eq:f_V_lower} is the minimization of a strictly convex function, the unique stationary point $y^* \in \R^{\supp(\varphi)}$ attains the minimum if it exists.
    Letting
    \begin{align}
        g(y) \coloneqq \exp\left(\sum_{a \in \supp(\varphi)} \varphi(a) y(a)\right) = \prod_{a \in \supp(\varphi)} e^{\varphi(a) y(a)},
    \end{align}
    we can write the first-order optimality criterion for $y^*$ as $-|V(a)|^2 e^{-y^*(a)} + \varphi(a) g(y^*) = 0$ for $a \in \supp(\varphi)$.
    Thus, we have
    \begin{align}
        y^*(a) = \log \frac{|V(a)|^2}{\varphi(a)g(y^*)}.
    \end{align}
    Substituting this back to $g(y)$, we obtain
    \begin{gather}
        g(y^*)
        = \prod_{a \in \supp(\varphi)} \left( \frac{|V(a)|^2}{\varphi(a)g(y^*)} \right)^{\varphi(a)}
        = g(y^*)^{-\norm{\varphi}_1} \prod_{a \in \supp(\varphi)} \left( \frac{|V(a)|^2}{\varphi(a)} \right)^{\varphi(a)}
    \shortintertext{and}
        g(y^*) = \prod_{a \in \supp(\varphi)} \left( \frac{|V(a)|^2}{\varphi(a)} \right)^{\frac{\varphi(a)}{1 + \norm{\varphi}_1}},
    \end{gather}
    implying the desired lower bound as~\eqref{eq:f_V_lower} is at least $g(y^*)$.
\end{proof}

\subsection{General case}

Now, let us move on to the general case to show King's criterion. 
Let $V$ be a representation with the dimension vector $\alpha$.
By definition, $V$ is $\sigma$-semistable if and only if
\begin{align}
    &\inf_{g \in \GL(Q, \alpha)} 
    \left( \sum_{a \in Q_1} \norm*{g_{ha}V(a)g_{ta}^{-1}}_F^2 + \abs{\chi_\sigma(g)}^2 \right) \\
    &=\inf_{g \in \GL(Q, \alpha)}\left( \sum_{a \in Q_1} \tr(V(a)^\dagger g_{ha}^\dagger g_{ha} V(a) (g_{ta}^\dagger g_{ta})^{-1}) + \prod_{i \in Q_0} \det (g_i^\dagger g_i)^{\sigma(i)} \right) \\
    &=\inf_{X \in \PD(Q, \alpha)}\left( \sum_{a \in Q_1} \tr(V(a)^\dagger X_{ha} V(a) X_{ta}^{-1}) + \prod_{i \in Q_0} \det X_i^{\sigma(i)} \right) > 0.
\end{align}
Here, $\PD(Q, \alpha) \coloneqq \prod_{i \in Q_0} \PD(\alpha(i))$, in which $\PD(n)$ denotes the set of positive definite matrices of degree $n$.
Let us take an eigendecomposition of $X_i$ as
\begin{align}
    X_i = U_i \Diag(e^{x_i}) U_i^\dagger = \sum_{j \in [\alpha(i)]} e^{x_{i}(j)} \bu_{i,j} \bu_{i,j}^\dagger,
\end{align}
where $x_i \in \R^{\alpha(i)}$, $U_i \in U(\alpha(i))$, and $\bu_{i,j}$ is the $j$th column of $U_i$.
Then, the objective function inside the infimum becomes
\begin{align}
    f(x, U) \coloneqq \sum_{a \in Q_1} \sum_{j \in [\alpha(ha)]} \sum_{k \in [\alpha(ta)]} {\bigl| \bu_{ha,j}^\dagger V(a) \bu_{ta,k} \bigr|}^2 e^{x_{ha}(j) - x_{ta}(k)} + \exp\left(\sum_{i \in Q_0} \sigma(i) \sum_{j \in [\alpha(i)]} x_{i}(j) \right).
\end{align}

Observe that $f(x, U)$ for a fixed $U$ is in the form of the objective function~\eqref{def:f_V} of the base case. 
More precisely, let $Q' = (Q_0', Q_1')$ be a quiver such that each vertex $i \in Q_0$ is copied into $\alpha(i)$ copies $(i, j)$ for $j \in [\alpha(i)]$ and each arc $a \in Q_1$ is copied $\alpha(ha)\alpha(ta)$ times to connect the copies of the original endpoints.
Then, $V|_U = \bigl\{\bu_{ha,j}^\dagger V(a) \bu_{ta,k} : a \in Q_1,\ j \in [\alpha(ha)],\ k \in [\alpha(ta)]\bigr\}$ is a representation of $Q'$ with all-one dimension vector.
Let $\sigma'$ be a weight on $Q'$ such that $\sigma'(i,j) = \sigma(i)$.
Then, we have $f(x, U) = h_{V|_U, \sigma'}(x)$.

Since $\inf_{x,U} f(x, U) > 0$ implies $\inf_{x} f(x, U) > 0$ for any fixed unitary $U$, by \cref{lem:1d-King}, the $\sigma$-semistability of $V$ implies the following property:
\begin{itemize}
    \item[(P)] for any unitary $U$, $\sigma'(Q_0) = 0$ and $\sigma'(X) \leq 0$ for each lower set $X$ of the support quiver of $V|_U$.
\end{itemize}
To show the converse direction, suppose that (P) holds.
By \cref{lem:1d-King}, this implies the existence of $\varphi \in \Phi^{\sigma'}$ with $\supp(\varphi) \subseteq \supp(V|_U)$ for any $U$.
By \cref{lem:h-lower}, $\inf_{x} f(x, U)$ is lower bounded by some positive-valued continuous function of $U$.
Since the direct sum of the unitary groups is compact, we have that $\inf_{x, U} f(x, U) > 0$, i.e., $V$ is $\sigma$-semistable.

We finally show that (P) is equivalent to King's criterion.
First, note that
\begin{align}
    \sigma'(Q_0') = \sum_{i \in Q_0} \sigma(i) \alpha(i) = \sigma(\alpha) = \sigma(\dimv V).
\end{align}
Thus, $\sigma(\dimv V) = 0$ if and only if $\sigma'(Q_0') = 0$.
Let $X$ be a lower set of the support quiver of $V|_U$.
Then, $W(i) = \spnn{ \{\bu_{i,j} : (i, j) \in X \}}$ ($i \in Q_0$) defines a subrepresentation of $V$ and $\sigma(\dimv W) = \sigma'(X) \leq 0$.
Conversely, for a subrepresentation $W$, take a unitary $U$ such that $W(i)$ is spanned by $\bu_{i, j}$ for $j \in [\dim W(i)]$.
Define $X \subseteq Q_0'$ as $X = \{ (i, j) \in Q_0' : \bu_{i,j} \in W(i) \}$.
Then, $X$ is a lower set of the support quiver of $V|_U$ and $\sigma'(X) = \sigma(\dimv W) \leq 0$.
This completes the proof of King's criterion.

\end{document}